\def\R{\mathbb{R}}
\def\Q{\mathbb{Q}}
\def\A{\mathbb{A}}
\def\Z{\mathbb{Z}}
\def\C{\mathbb{C}}
\def\O{\mathcal{O}}
\def\H{\mathcal{H}}
\def\T{\mathcal{T}}
\def\PGL{\mathrm{PGL}}
\def\PSL{\mathrm{PSL}}
\def\TT{\boldsymbol{T}}
\def\GG{\boldsymbol{G}}
\def\sG{\widetilde{\boldsymbol{G}}}
\def\KK{\boldsymbol{K}}
\def\a{\mathfrak{a}}
\def\b{\mathfrak{b}}
\def\m{\mathfrak{m}}
\def\p{\mathfrak{p}}
\def\q{\mathfrak{q}}
\def\A{\mathbb{A}}
\def\P{\mathbb{P}}
\def\d{\partial}
\newtheorem{theorem}{Theorem}[section]
\newtheorem*{theorem*}{Lema}
\newtheorem{definition}{Definition}
\newtheorem{lemma}[theorem]{Lemma}
\newtheorem{proposition}[theorem]{Proposition}
\newtheorem{corollary}[theorem]{Corollary}
\theoremstyle{remark}
\newtheorem{remark}{Remark}
\numberwithin{equation}{section}
\newenvironment{customtheorem}[1]{%
  
  \theorem
}{\endtheorem}
\newcommand\rquot[2]{
  \mathchoice
  {% \displaystyle
    \text{\raise0.5ex\hbox{$#1$}\big/\lower0.5ex\hbox{$#2$}}%
  }
  {% \textstyle
    #1\,/\,#2
  }
  {% \scriptstyle
    #1\,/\,#2
  }
  {% \scriptscriptstyle
    #1\,/\,#2
  }
}
\newcommand\lrquot[3]{
  \mathchoice
  {% \displaystyle
    \text{\lower0.5ex\hbox{$#1$}\big\backslash\raise0.5ex\hbox{$#2$\!}\big/
      \lower0.5ex\hbox{\!\!$#3$}}%
  }
  {% \textstyle
    #1\,\backslash\,#2\,/\,#3
  }
  {% \scriptstyle
    #1\,\backslash\,#2\,/\,#3
  }
  {% \scriptscriptstyle
    #1\,\backslash\,#2\,/\,#3
  }
}
\newcommand\lquot[2]{
  \mathchoice
  {% \displaystyle
    \text{\lower0.5ex\hbox{$#1$}\big\backslash\raise0.5ex\hbox{$#2$}}%
  }
  {% \textstyle
    #1\,\backslash\,#2
  }
  {% \scriptstyle
    #1\,\backslash\,#2
  }
  {% \scriptscriptstyle
    #1\,\backslash\,#2
  }
}
\DeclareFontFamily{U}{wncy}{}
\DeclareFontShape{U}{wncy}{m}{n}{<->wncyr10}{}
\DeclareSymbolFont{mcy}{U}{wncy}{m}{n}
\DeclareMathSymbol{\Sh}{\mathord}{mcy}{"58}
\DeclareFontFamily{U} {cmmi}{}
\DeclareFontShape{U}{cmmi}{m}{n}{
  <-6> cmmi5
  <6-7> cmmi6
  <7-8> cmmi7
  <8-9> cmmi8
  <9-10> cmmi9
  <10-12> cmmi10
  <12-> cmmi12}{}
\DeclareSymbolFont{Xcmmi} {U} {cmmi}{m}{n}
\DeclareMathSymbol{\xi}{\mathord}{Xcmmi}{24}
\title{$p$-adic equidistribution of modular geodesics and of Heegner points on Shimura curves}
\author{Patricio Pérez-Piña}
\address{Pontificia Universidad Católica de Chile}
\email{perezpinha.patricio@gmail.com}
\date{\today}
\begin{document}
\begin{abstract}
We propose a $p$-adic version of Duke's Theorem on the equidistribution of closed geodesics on modular curves. Our approach concerns quadratic fields split at $p$ as well as a $p$-adic covering of the modular curve. We also prove an equidistribution result of Heegner points in the $p$-adic space attached to Shimura curves. 
\end{abstract}

\maketitle

\vspace{0.5cm}

\section{Introduction}

In \cite{D}, W. Duke proves the equidistribution, inside the complex points of the modular curve endowed with the hyperbolic measure, of arithmetic objects attached to orders in quadratic extensions of $\Q$. The nature of these objects depends on the behavior at infinity of the extension. For quadratic imaginary fields these objects are Galois orbits of CM points and in the case of real quadratic fields his equidistribution theorem concerns packets of closed geodesics. 

Let $p$ be a rational prime. Since CM points are defined over number fields, their Galois orbits can be naturally embedded inside the $p$-adic space of $\C_p$-points of the modular curve. In \cite{HMRL} and \cite{HMRL}, the authors studied the distribution of such Galois orbits obtaining $p$-adic analogues to Duke's theorem in the case of imaginary quadratic fields. In contrast with Duke's result, different limit measures can appear, depending on the discriminants  of the orders involved in the sequence of CM points. 

The aim of this article is to state and prove a $p$-adic version of Duke's theorem on the equidistribution of packets of closed geodesics. Also, we prove a $p$-adic version of the equidistribution of Galois orbits of Heegner points on Shimura curves, when the prime $p$ divides the discriminant of the underlying quaternion algebra. The complex version of the latter result is another extension of Duke's theorem for which we refer to section 6 in \cite{Michel2} or to \cite{Zhang}.

We recall Duke's result following section 2 of \cite{ELMV2}. Let $\H$ be the complex upper half plane and denote by $Y_0$ the modular curve of level $1$. Then we have an identification between the quotient space $\PSL_2(\Z)\backslash\H$ and $Y_0(\C)$. Moreover, the quotient space $\PSL_2(\Z)\backslash\PGL_2(\R)^+$ is identified with $T^1(Y_0(\C))$, the unit tangent bundle of $Y_0(\C)$. Here and elsewhere, the superscript $^+$ indicates that we consider matrices of positive determinant. Suppose that $\tau$ is a real quadratic irrational. We can view $\tau$ as an element in the boundary of $\H$ and we can consider the geodesic $\gamma_\tau$ in $\H$ connecting $\tau$ and its Galois conjugate. The projection of $\gamma_\tau$ to ${\PSL_2(\Z)}\backslash{\H}$ is a closed geodesic. We lift this closed geodesic to a compact geodesic orbit $\widetilde{\gamma}_\tau$ in $T^1(Y_0(\C))$ that depends only on the $\PSL_2(\Z)$-orbit of $\tau$. Also, we denote by $\O_\tau$ the ring of matrices in $M_2(\Z)$ having the column $(\tau\,\,\, 1)^t$ as an eigenvector. For $K$ a real quadratic field, we denote by $RQ(K)$ the set of $\tau$ as before such that $\O_\tau$ is isomorphic to the maximal order $\O_K$. The set $\PSL_2(\Z)\backslash RQ(K)$ is finite and its cardinality equals the class number $h_K$ of $K$. We put $\mathcal{G}_K=\bigcup_\tau \widetilde{\gamma}_\tau$, with $\tau$ running through representatives of $\PSL_2(\Z)\backslash RQ(K)$. Define by $\mu_K$ the unique probability measure in $T^1(Y_0(\C))$ supported on $\mathcal{G}_K$ that is invariant under the geodesic flow. Endow $T^1(Y_0(\C))$ with the unique probability measure $\mu_L$ coming from a Haar measure on $\PGL_2^+(\R)$ and a counting measure on $\PSL_2(\Z)$.

\begin{theorem}[Duke] As $disc(K)\to\infty$, the sequence $\mu_K$ converges to $\mu_L$ in the weak-* topology. In other words, the collection $\mathcal{G}_K$ of $h_K$ closed geodesics becomes equidistributed in $T^1(Y_0(\C))$ with respect to the measure $\mu_L$.
\end{theorem}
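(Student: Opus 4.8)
The plan is to verify Weyl's equidistribution criterion. Since $\mu_K$ and $\mu_L$ are Borel probability measures on $T^1(Y_0(\C))\cong\PSL_2(\Z)\backslash\PGL_2(\R)^+$, it suffices to show $\int f\,d\mu_K\to\int f\,d\mu_L$ as $\operatorname{disc}(K)\to\infty$ for $f$ ranging over a family spanning a dense subspace of $C_c\big(T^1(Y_0(\C))\big)$. By the spectral decomposition of the modular surface such a family consists of the constant function, the Hecke--Maass cusp forms, and the incomplete Eisenstein series (whose Mellin expansion involves the unitary Eisenstein series $E(\cdot,\tfrac12+it)$, $t\in\R$). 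For the constant the claim is the normalization $\mu_K(1)=1=\mu_L(1)$, so the theorem reduces to proving that the \emph{Weyl sums}
\[
W_K(f)\;:=\;\int f\,d\mu_K\;=\;\frac{1}{\ell(\mathcal{G}_K)}\sum_{\tau}\int_{\widetilde\gamma_\tau}f\,ds
\]
(the sum over representatives $\tau$ of $\PSL_2(\Z)\backslash RQ(K)$, with $ds$ arc length and $\ell(\mathcal{G}_K)$ the total length) tend to $0$ when $f$ is cuspidal or a non-constant Eisenstein contribution.

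The second step is to identify $W_K(f)$ as a \emph{toric period}. Parametrizing $\PSL_2(\Z)\backslash RQ(K)$ by $\operatorname{Pic}(\O_K)$ and passing to the adelic quotient $\PGL_2(\Q)\backslash\PGL_2(\A)$, the union $\mathcal{G}_K$ becomes a single orbit of the anisotropic torus $\TT=(\operatorname{Res}_{K/\Q}\mathbb{G}_m)/\mathbb{G}_m$, and $\mu_K$ becomes the push-forward of the Haar probability measure on $\TT(\Q)\backslash\TT(\A)$. Hence $W_K(f)$ is the Waldspurger period of the automorphic form $\phi_f$ associated to $f$, normalized by $\operatorname{vol}\big(\TT(\Q)\backslash\TT(\A)\big)$. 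Invoking Waldspurger's period formula --- or, in the incarnation closest to Duke's original argument \cite{D}, the Katok--Sarnak correspondence, which matches $W_K(f)$ with a Fourier coefficient of a weight-$\tfrac12$ Maass form of level controlled by $\operatorname{disc}(K)$ --- together with the Dirichlet class number formula, which gives $\ell(\mathcal{G}_K)=|\operatorname{disc}(K)|^{1/2+o(1)}$, one arrives, for a fixed Hecke--Maass cusp form $f$, at an identity of the shape
\[
|W_K(f)|^2\;\asymp_{f,\varepsilon}\;\frac{L\!\left(\tfrac12,\pi_f\times\chi_K\right)}{|\operatorname{disc}(K)|^{1/2}}\,|\operatorname{disc}(K)|^{\varepsilon},
\]
with $\pi_f$ the automorphic representation generated by $\phi_f$ and $\chi_K$ the quadratic character cutting out $K$; for the Eisenstein parameter $\tfrac12+it$ the relevant quantity is $|\zeta(\tfrac12+it)|^2\,|L(\tfrac12+it,\chi_K)|^2/|\zeta(1+2it)|^2$ times a negative power of $|\operatorname{disc}(K)|$. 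One must also rule out escape of mass into the cusp: this follows from the Eisenstein estimate applied to test functions supported high in the cusp, together with the elementary fact that, on average over the packet, a closed geodesic of discriminant $D$ spends a negligible proportion of its length above any fixed height.

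The third and decisive step is the analytic input. The convexity bound $L(\tfrac12,\pi_f\times\chi_K)\ll_{f,\varepsilon}|\operatorname{disc}(K)|^{1/2+\varepsilon}$ only gives $|W_K(f)|\ll_{f}|\operatorname{disc}(K)|^{\varepsilon}$, which is worthless; what is required is a \emph{subconvex} bound $L(\tfrac12,\pi_f\times\chi_K)\ll_{f,\varepsilon}|\operatorname{disc}(K)|^{1/2-\delta+\varepsilon}$ for some absolute $\delta>0$, together with Burgess's classical subconvex bound $L(\tfrac12+it,\chi_K)\ll_{\varepsilon}|\operatorname{disc}(K)|^{1/4-\delta'+\varepsilon}$ for the Eisenstein piece. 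These yield $|W_K(f)|\ll_{f}|\operatorname{disc}(K)|^{-\delta/2}\to 0$, completing Weyl's criterion. In Duke's treatment \cite{D} this subconvexity is replaced by --- and is indeed equivalent to --- Iwaniec's nontrivial estimate for the Fourier coefficients of half-integral weight cusp forms, proved by amplification and the circle method. This is the crux and the main obstacle: Weyl's criterion, the adelic reformulation, the period identity, and the control of the cusp are all soft or elementary, whereas breaking convexity for these families of twisted $L$-functions is precisely the deep content of the theorem.

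For completeness I would also recall the ergodic route of \cite{ELMV2}, which is the template the $p$-adic analogues below imitate: if an auxiliary prime $q$ splits in $K$, the packet $\mathcal{G}_K$ lifts to a union of orbits of the diagonal torus in an $S$-arithmetic quotient that is moreover invariant under the Hecke operator at $q$; measure rigidity for the diagonal action then forces every weak-$*$ limit of $\mu_K$ to be a convex combination of $\mu_L$ and measures supported on closed torus orbits, and elementary counting and volume bounds exclude both the latter and the escape of mass. This avoids $L$-functions at the price of the splitting hypothesis, which is harmless here.
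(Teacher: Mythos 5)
The paper does not prove this theorem: it is stated as background, attributed to Duke \cite{D}, and recalled following \cite{ELMV2}, so there is no in-paper proof to compare yours against. Your sketch is nonetheless a sound outline of the two known proofs of Duke's theorem, and it is worth saying how each relates to what this paper actually does.

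Your analytic route (Weyl criterion over the spectral decomposition, identification of the Weyl sums as toric periods via Waldspurger or Katok--Sarnak, the class-number formula giving $\ell(\mathcal{G}_K)=|{\rm disc}(K)|^{1/2+o(1)}$, and the reduction to subconvexity for $L(\tfrac12,\pi_f\times\chi_K)$ and $L(\tfrac12+it,\chi_K)$) is exactly Duke's original argument as reinterpreted by Iwaniec and later by Michel--Venkatesh, and your statement of the period asymptotic $|W_K(f)|^2\asymp L(\tfrac12,\pi_f\times\chi_K)/|{\rm disc}(K)|^{1/2+o(1)}$ is the right one. The piece of your write-up most relevant to the present paper is the final paragraph: Theorems A and B here are proved precisely by the ergodic/adelic strategy of \cite{ELMV2,ELMV3}, namely by realizing the packets as homogeneous toral sets in $[\GG]$ and applying Theorem 4.6 of \cite{ELMV3} (quoted as Theorem \ref{metathm}), together with the character computation of Proposition \ref{test} to upgrade $\GG(\A)^+$-invariance to full invariance. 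So the route you mention only ``for completeness'' is in fact the one this paper builds on, while the $L$-function route plays no role here.

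One small caution: you describe the non-escape-of-mass step as essentially elementary. In fact, in both the analytic treatment (where it comes from the Eisenstein/incomplete-Eisenstein estimate) and in the ergodic treatment of \cite{ELMV2} (where it rests on a nontrivial volume lower bound for the packets, itself using Siegel's theorem or an equivalent), this is a genuine ingredient and not a formality; glossing it as ``elementary'' understates it. Similarly, in the ergodic route you should say explicitly that the splitting hypothesis at an auxiliary prime $q$ can be removed (as in \cite{ELMV2}) only by reintroducing some input of Duke/Iwaniec type, so the statement that the ergodic approach ``avoids $L$-functions'' holds only under that extra hypothesis.
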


We proceed to explain our main result on geodesics. As we have seen, in the complex case, geodesics appear when dealing with real quadratic fields or in other words when the infinite place is split in the extension. Following this point of view, we exchange the role of $\infty$ and $p$ and we focus on quadratic extensions such that $p$ splits. The role played by $\PSL_2(\Z)$ and $\PGL_2^+(\R)$ will be held by $\Gamma^+\colonequals\PGL^+_2(\Z[1/p])$ and $\PGL_2(\R\times\Q_p)$, respectively. Let $\H'$ be the set of points $\tau$ in $\H$ such that $\tau$ generates an imaginary quadratic extension over $\Q$ in which $p$ splits. The quotient $\lquot{\Gamma^+}{\H'}$ was studied by Ihara who showed that it is related to the special fiber of the modular curve $X_0(p)$. 

\begin{theorem}[Chapter 5, Theorem 1 in \cite{Ihara}] Fix $\mathfrak{P}$, a prime above $p$ in $\overline{\Z}$, the integral closure of $\Z$ in $\overline{\Q}$. Identify $\overline{\Z}/\mathfrak{P}$ with $\overline{\mathbb{F}_p}$ and recall that by the Theory of Complex Multiplication, for any $\tau\in\H'$, we have $j(\tau)\in\overline{\Z}$. The assignment \[\Gamma^+\tau\mapsto \mathrm{Gal}(\overline{\mathbb{F}_p}/\mathbb{F}_p)\cdot(j(\tau)\mod\mathfrak{P})\] defines a bijection between $\Gamma^+\backslash\H'$ and Galois orbits in $\overline{\mathbb{F}_p}-SS$, where $SS$ is the set of supersingular $j$-invariants.
\end{theorem}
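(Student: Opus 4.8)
\emph{Sketch of the intended argument.} The plan is to identify both sides with the reduction theory of CM elliptic curves and show the correspondence is a bijection. For $\tau\in\H'$ put $K=\Q(\tau)$, let $\O_\tau$ be the associated order in $K$, and let $E_\tau=\C/(\Z+\Z\tau)$, so that $\End(E_\tau)=\O_\tau$ and $j(E_\tau)=j(\tau)$. First I would check that the assignment is well defined with image in $\overline{\mathbb{F}_p}-SS$. Since $p$ splits in $K$, Deuring's reduction criterion gives that $E_\tau$ has good \emph{ordinary} reduction at $\mathfrak{P}$, regardless of the power of $p$ dividing $\mathrm{cond}(\O_\tau)$; hence $j(\tau)\bmod\mathfrak{P}\notin SS$. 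The group $\Gamma^+$ is generated by $\PSL_2(\Z)$, under which $j$ is invariant, together with $\mathrm{diag}(p,1)$, which sends $\tau$ to $p\tau$ with $E_{p\tau}$ joined to $E_\tau$ by a cyclic $p$-isogeny. An ordinary elliptic curve over $\overline{\mathbb{F}_p}$ has exactly two order-$p$ subgroup schemes — the connected kernel of Frobenius and the étale kernel of Verschiebung — so the reduction mod $\mathfrak{P}$ of that isogeny forces $j(p\tau)\bmod\mathfrak{P}$ to be a $\mathrm{Gal}(\overline{\mathbb{F}_p}/\mathbb{F}_p)$-conjugate of $j(\tau)\bmod\mathfrak{P}$ (namely its $p$-th power or $p$-th root). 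Thus the orbit $\mathrm{Gal}(\overline{\mathbb{F}_p}/\mathbb{F}_p)\cdot(j(\tau)\bmod\mathfrak{P})$ depends only on $\Gamma^+\tau$, and the map descends.

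For surjectivity I would invoke Deuring's lifting theorem. An ordinary $\overline E$ over $\overline{\mathbb{F}_p}$ has $\End(\overline E)$ an order $\O'$ in an imaginary quadratic field in which $p$ necessarily splits, and reduction at $\mathfrak{P}$ is a $\mathrm{Pic}(\O')$-equivariant map from the torsor of $\overline\Q$-isomorphism classes of elliptic curves with CM by $\O'$ to the nonempty $\mathrm{Pic}(\O')$-torsor of ordinary $\overline{\mathbb{F}_p}$-curves with endomorphism ring $\O'$; an equivariant map of torsors under the same group is a bijection, so $\overline E\cong\widetilde{E_\tau}$ for some $\tau\in\H'$ with $\O_\tau=\O'$, and $\Gamma^+\tau$ maps to the Galois orbit of $j(\overline E)$.

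Injectivity is the heart of the matter. Suppose $j(\tau_2)\equiv j(\tau_1)^{p^n}\pmod{\mathfrak{P}}$; matching the endomorphism algebras of the (ordinary) reductions forces $\Q(\tau_1)=\Q(\tau_2)=:K$. The $p+1$ integral matrices of determinant $p$, which represent $\PSL_2(\Z)\backslash\{g\in M_2(\Z):\det g=p\}$ and carry $\tau$ to the points whose $j$-invariants exhaust the roots of $\Phi_p(j(\tau),Y)$, all lie in $\Gamma^+$; moreover a cyclic $p$-isogeny changes the $p$-adic valuation of the conductor by at most one, and exactly one of those $p+1$ neighbours strictly lowers it whenever $p\mid\mathrm{cond}(\O_\tau)$. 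Iterating, I may replace each $\tau_i$ by a $\Gamma^+$-equivalent point whose order $\O'$ has conductor prime to $p$. Then $\End(\widetilde{E_{\tau_i}})=\O'$, Frobenius acts on the $\mathrm{Pic}(\O')$-torsor of such curves by translation by the class of the prime $\p\mid p$ of $\O'$ singled out by $\mathfrak{P}$, and multiplication by $[\p]$ on CM curves is again a cyclic $p$-isogeny, hence realised on $j$-invariants by one of the $p+1$ matrices above, i.e.\ by an element of $\Gamma^+$; applying a suitable power of it to $\tau_1$ reduces us to $\widetilde{E_{\tau_2}}\cong\widetilde{E_{\tau_1}}$. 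Injectivity of the reduction torsor morphism on CM curves by $\O'$ then yields $E_{\tau_1}\cong E_{\tau_2}$ over $\overline\Q$, so $j(\tau_1)=j(\tau_2)$ and $\tau_1,\tau_2$ lie in a single $\PSL_2(\Z)$-orbit. I expect the two delicate ingredients to be the precise $\mathrm{Pic}(\O')$-equivariance of reduction together with the identification of the Frobenius translation with $[\p]$ (classical, after Deuring's study of multiplier rings), and the combinatorial fact that the $p$-adic conductor can always be driven to zero within a $\Gamma^+$-orbit, which is cleanest via the action of $\Gamma^+$ on the Bruhat--Tits tree of $\PGL_2(\Q_p)$.
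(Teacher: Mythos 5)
The paper does not reprove this theorem; it simply cites Ihara (Chapter 5, Theorem 1 of \emph{On congruence monodromy problems}), so there is no in-text proof to compare against. Your sketch is, up to exposition, the standard CM-theoretic route — Deuring's reduction and lifting, the $\mathrm{Pic}(\O')$-torsor structure on ordinary curves with fixed endomorphism ring, and the identification of Frobenius with translation by $[\p]$ — and this is essentially how the statement is established in the literature. The tree picture you invoke to drive the $p$-part of the conductor to zero inside a $\Gamma^+$-orbit is exactly the $(p+1)$-volcano structure the paper itself develops later in Proposition \ref{volcano}, so the two texts are using the same underlying combinatorics.

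Two small points you should tighten. First, the generation claim $\Gamma^+=\langle\PSL_2(\Z),\mathrm{diag}(p,1)\rangle$ is true but deserves a line of justification (e.g.\ transitivity of the action of that subgroup on $\mathcal{V}(\T_p)$ together with $\PGL_2^+(\Z[1/p])\cap\PGL_2(\Z_p)=\PSL_2(\Z)$). Second, in the injectivity step, after replacing each $\tau_i$ by a $\Gamma^+$-equivalent point of conductor prime to $p$ you tacitly assume the two resulting orders coincide; you should record that $\widetilde{E_{\tau_2}}\cong\widetilde{E_{\tau_1}}^{(p^n)}$ and Frobenius twisting preserves the endomorphism ring, which forces $\O_{\tau_1}\cong\O_{\tau_2}$. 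With those two remarks made explicit, the argument is complete.
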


The $p$-adic covering of $Y_0(\C)$ that we consider is the locally compact space \[T^{(p)}(Y_0(\C))\colonequals\lquot{\Gamma^+}{(\H\times \PGL_2(\Q_p))}.\] The natural projection $T^{(p)}(Y_0(\C))\to Y_0(\C)$ is given by taking a right quotient by $\PGL_2(\Z_p)$ (see section \ref{IScycles}). Let $\T_p$ denote the Bruhat-Tits tree of $\PGL_2(\Q_p)$. After identifying the boundary of the tree $\T_p$ with $\P^1(\Q_p)$, the splitness condition on $p$ allows us to see $\H'$ as boundary elements of $\T_p$. For $\tau$ in $\H'$, we define in Section \ref{IScycles} a cycle $\Delta_\tau$ inside the space $T^{(p)}(Y_0(\C))$ than depends only on the $\Gamma^+$-orbit of $\tau$. The projection of $\Delta_\tau$ on $\Gamma^+\backslash\H$ is just $\Gamma^+\tau$. The fiber over $\Gamma^+\tau$ under the natural projection $\lquot{\Gamma^+}{(\H'\times\T_p)}\to \lquot{\Gamma^+}{\H'}$ is $\lquot{\Gamma^+}{(\Gamma^+\tau\times \T_p)}$. This is a graph having the structure of a $(p+1)$-volcano, that can be identified with the $p$-isogeny graph attached to any elliptic curve coming from a $\PSL_2(\Z)$-orbit in $\Gamma^+\tau$. The projection of $\Delta_\tau$ to this graph is its unique closed loop (known as its rim). Observe that there's a bijection between $\lquot{\Gamma^+}{(\Gamma^+\tau\times\T_p)}$ and $\{\tau\}\times\lquot{\Gamma^+_\tau}{\T_p}$, where $\Gamma^+_\tau$ denotes the stabilizer of $\tau$ in $\Gamma^+$. The aforementioned loop is formed by taking the unique geodesic in $\T_p$ joining $\tau$ and its Galois conjugate and projecting it to $\lquot{\Gamma^+_\tau}{\T_p}$. This resembles the complex case where the closed geodesics on $Y_0(\C)$ are projections of geodesics in $\H$ joining two real quadratic numbers that are conjugated. In \cite{BDIS}, such loops in $\lquot{\Gamma^+_\tau}{\T_p}$ are called Shintani-cycles, for that reason we refer to $\Delta_\tau$ as the Ihara-Shintani cycle attached to $\Gamma^+\tau$. In this way, we think of the cycles $\Delta_\tau$ as the $p$-adic analogues to the compact geodesic orbits on the modular surface and the space $T^{(p)}(Y_0(\C))$ as a $p$-adic unit tangent bundle of $Y_0(\C)$. 

For $\tau\in\H'$, we denote by $\O_\tau$ the ring formed by the elements in $M_2(\Z[1/p])$ having the column $(\tau\,\,\, 1)^t$ as an eigenvector. Let $K$ be a quadratic imaginary field at which $p$ splits and let $\O$ be a $\Z[1/p]$-order inside $K$. We denote by $IS(\O)$ the collection of $\Gamma^+$-orbits of those $\tau\in\H'$ for which $\Q(\tau)=K$ and $\O_\tau$ is isomorphic to the order $\O$. The group $Pic(\O)$ act simply transitive on $IS(\O)$ and so this set is finite. Let $A$ be the diagonal group of $\PGL_2$. The cycle $\Delta_\tau$ comes equipped with a unique $A(\Q_p)$-right invariant probability measure $\nu_\tau$. Denote by $\mu_\O$ the unique probability measure proportional to $\sum_{\Gamma^+\tau\in IS(\O)}\nu_\tau$. 

Equip $T^{(p)}(Y_0(\C))$ with the unique probability measure coming from a Haar measure on $\PGL_2(\R\times\Q_p)$ and the counting measure in $\Gamma$. Denote this measure by $\mu$. Our $p$-adic analogue to Duke's theorem on the equidistribution of closed geodesics is the following statement.

\begin{customtheorem}{A}[]\label{thm1}
  As $disc(\O)\to-\infty$, the sequence $\mu_\O$ converges to $\mu$ in the weak-* topology. In other words, for every continous and compactly supported function $f\colon T^{(p)}(Y_0(\C))\to\C$, \[ \lim_{disc(\O)\to-\infty}\int fd\mu_\O\to\int fd\mu.\]
\end{customtheorem}

Now we discuss our result on Heegner points on Shimura curves. Fix $\mathcal{B}$ a non-split indefinite quaternion algebra defined over $\Q$ whose discriminant $N^-$ is divisible by $p$. Also fix $\mathcal{R}$ an Eichler-order in $\mathcal{B}$ of level $N^+$. Let $X$ be the Shimura curve attached to $(\mathcal{B},\mathcal{R})$ as in section \ref{Heeg}. Let $\Q_{p^2}$ be the unique quadratic unramified extension of $\Q_p$. The compact space $X(\Q_{p^2})$ admits a uniformization by $\PGL_2(\Q_p)$ due to Cerednik and Drinfeld. Pushing a Haar measure of $\PGL_2(\Q_p)$ we can endow $X(\Q_{p^2})$ with a natural probability measure $\mu_X$. Let $\O$ be an order in a quadratic imaginary field in which $p$ is inert and whose discriminant is coprime to $pN^-N^+$. Let $Heeg(\O)$ denote the collection of Heegner points in $X(\Q_{p^2})$ with endomorphism ring isomorphic to $\O$ defined in Section \ref{Heeg}. We obtain the following equidistribution theorem.

\begin{customtheorem}{B}[]\label{thm2}
  As $disc(\O)\to-\infty$, the collection $Heeg(\O)$ becomes equidistributed on $X(\Q_{p^2})$ with respect to the measure $\mu_X$. In other words, for every continuous function $f\colon X(\Q_{p^2})\to\C$, \[\lim_{disc(\O)\to-\infty}\frac{1}{\#Heeg(\O)}\sum_{P\in Heeg(\O)}f(P)\to\int_{X(\Q_{p^2})}fd\mu_X.\]
\end{customtheorem}

A related result by Disegni \cite{Disegni} concerns the case where the power of $p$ in the conductor of the Heegner points tends to infinity. In such a situation there is no accumulation measure on $X(\C_p)$ describing the asymptotic distribution of their Galois orbits

The main tool for our purposes is Theorem 4.6 in \cite{ELMV3}. In consequence, our strategy is to describe the previous objects and ambient spaces in an $S$-arithmetic and adelic context. In doing so we will appeal to the language of (oriented) optimal embeddings to parametrize our packets as orbits under certain Pic groups which will lead to a description of these as toric orbits inside a homogeneous space. In section \ref{thetree} we cover the basics about the Bruhat-Tits tree of $\PGL_2$ making special emphasis on how quadratic algebras act on it. In section \ref{adelic} we recall the language of adelic and $S$-arithmetic homogeneous spaces as presented for instance in \cite{ELMV3}. In section \ref{cycles} we elaborate on the definition of cycles $\Delta_\psi$ in a general setting, where $\psi$ runs over embeddings of quadratic extension to a quaternion algebra. More specifically, in section \ref{Eorders} and \ref{Oembeddings} we discuss the structure of Eichler orders and oriented optimal embeddings respectively. In section \ref{Picaction} we show how the group $Pic(\O)$ acts simply and transitively on the set of oriented optimal embeddings (with respect to $\O$), showing that the cycles $\Delta_\psi$ defined in section \ref{cyclesdef} are finite and they can be recovered from a single oriented optimal embedding. Section \ref{IScycles} is a specialization of the previous sections and it is where we properly define the cycles $\Delta_\tau$ for $\tau$ in $\Gamma^+\backslash \H'$ and discuss the analogies with the geodesics in Duke's theorem. Section \ref{Heeg} deals with the different specialization related to Heegner points on the $p$-adic points of indefinite Shimura curves. Finally, section \ref{adelicmethod} presents Theorem 4.6 in \cite{ELMV3}. We explain how to apply it to show the equidistribution of the objects defined in sections \ref{IScycles} and \ref{Heeg}, proving in particular Theorem \ref{thm1} and Theorem \ref{thm2}.

\section*{Acknowledgments}

I would like to thank my advisor Ricardo Menares for trusting me with this project and for his support throughout this work. I also would like to thank Philippe Michel and the Analytic Number Theory group at EPFL. I was very lucky to enjoy their hospitality during the academic period 2022-2023. This work was supported by ANID Doctorado Nacional No 21200911 and by the Bourse d’excellence de la Confédération suisse No. 2022.0414.

\section{Dynamics of tori acting on the Bruhat-Tits tree}\label{thetree}

Our main references for this section are \cite{Serre}, \cite{Voight} and \cite{Cass}. Let $\ell$ be a rational prime. Let $\T_\ell$ be the Bruhat-Tits tree of $\PGL_2(\Q_\ell)$. Its set of vertices $\mathcal{V}(\T_\ell)$ corresponds to homothety classes of $\Z_\ell$-lattices in $\Q_\ell^2$. The class of a lattice $\Lambda$ is denoted by $[\Lambda]$ and for $n\in\Z$ we set $v_n\colonequals[\ell^n\Z_\ell\times\Z_\ell]$. Two vertices are connected by an edge if they admit representatives $\Lambda$ and $\Lambda'$ such that $\ell\Lambda\subseteq \Lambda'\subseteq \Lambda$ and $\Lambda/\Lambda'$ is cyclic of order $\ell$. With these definitions, $\T_\ell$ is a $(\ell+1)$-regular tree. 

If $e$ is an edge connecting two vertices, the choice of one of them as its source (denoted $s(e)$) and the other one as its target (denoted $t(e)$) make $e$ an oriented edge. We denote by $\overrightarrow{\mathcal{E}}(\T_\ell)$  the set of oriented edges. Let $e_\infty$ be the oriented edge such that $s(e_\infty)=v_0$ and $t(e_\infty)=v_{-1}$. The group $\PGL_2(\Q_\ell)$ acts transitively on $\mathcal{V}(\T_\ell)$ and $\overrightarrow{\mathcal{E}}(\T_\ell)$. The subgroups $\PGL_2(\Z_\ell)$ and \[\Gamma_0(\ell\Z_\ell)=\left\{\begin{pmatrix}a&b\\c&d\end{pmatrix}\in \PGL_2(\Z_\ell): \ell\mid c\right\}\] are the stabilizer of $v_0$ and $e_\infty$ respectively. Consequently, we have identifications \[\mathcal{V}(\T_\ell)=\PGL_2(\Q_\ell)/\PGL_2(\Z_\ell) \mbox{ and }\overrightarrow{\mathcal{E}}(\T_\ell)=\PGL_2(\Q_\ell)/\Gamma_0(\ell\Z_\ell).\]

A path of length $n\geq0$ in $\T_\ell$ is sequence of $n$ adjacent vertices without backtracking. Paths of length $n$ starting from $v_0$ are identified with $\P^1(\Z/\ell^n\Z)$: To give such a path is equivalent to give a vertex $v$ at distance $n$ from $v_0$. The group $\PGL_2(\Z_\ell)$ acts transitively on such vertices and one such vertex is $v_{-n}$. Its stabilizer in $\PGL_2(\Z_\ell)$ is \[\Gamma_0(\ell^n\Z_\ell)=\left\{\begin{pmatrix}a&b\\c&d\end{pmatrix}\in \PGL_2(\Z_\ell): \ell^n\mid c\right\}.\] Therefore we have $\PGL_2(\Z_\ell)$-equivariant identifications between \begin{equation}\label{neighbords}\PGL_2(\Z_\ell)/\Gamma_0(\ell^n\Z_\ell)\cong\P^1(\Z/\ell^n\Z),\end{equation} and paths of length $n$ starting from $v_0$ such that the path ending at $v_{-n}$ corresponds to $\infty$. The action of $\PGL_2(\Z_\ell)$ on the right hand side of \eqref{neighbords} is by fractional linear transformations.

A branch in $\T_\ell$ is a sequence of adjacent vertices without backtracking. Two branches are equivalent if they differ by a finite initial sequence. An equivalence class of branches is called an end of $\T_\ell$ and we denote the set of ends by $\partial\T_\ell$. We can see $\partial\T_\ell$ as the boundary of the tree and identify it with $\P^1(\Q_\ell)$: The group $PGL_2(\Q_\ell)$ acts transitively on the set of ends and the stabilizer of the end coming from the branch $\{v_{-n}\}_{n\geq0}$ is the subgroup $B(\Q_\ell)\subseteq \PGL_2(\Q_\ell)$ of upper triangular elements. We have $\PGL_2(\Q_\ell)$-equivariant identifications \begin{equation}\label{boundary}\partial\T_\ell\cong\PGL_2(\Q_\ell)/B(\Q_\ell)\cong\PGL_2(\Z_\ell)/B(\Z_\ell)\cong\P^1(\Z_\ell)=\P^1(\Q_\ell),\end{equation} such that that the branch coming from $\{v_{-n}\}_{n\geq0}$ corresponds to $\infty$. As before, under this identification the action of $\PGL_2(\Q_\ell)$ on $\P^1(\Q_\ell)$ corresponds to the action by fractional linear transformations. 

A geodesic is the path without backtracking in the tree connecting two different points in the boundary. The geodesic joining the points $0$ and $\infty$ is given by the vertices $\{v_n\}_{n\in\Z}$ and we denote it by $\mathcal{G}$. In general $\mathcal{G}_x^y$ denotes the geodesic joining $x$ and $y$.

Let $v$ and $v'$ be two vertices in $\T_\ell$. Being $\T_\ell$ a tree, there exists a unique finite path connecting $v$ and $v'$. The distance $d(v,v')$ between $v$ and $v'$ is the length of this path. For instance, two vertices are at distance $1$ if they are connected by an edge. Let $e$ an edge and $\eta$ a path (no necessarily of finite length) in the tree. The distance $d(v,\eta)$ between $v$ and $\eta$ is $\min_{w\in\eta}d(v,w)$ and the distance $d(e,\eta)$ between $e$ and $\eta$ is $\max_{w\in e}{d(w,\eta)}$.

We denote by $A\subseteq PGL_2(\Q_\ell)$ the subgroup of diagonal elements.

\begin{proposition}\label{action1} The following hold
\begin{enumerate}
\item The group $A$ fixes the points $0$ and $\infty$ in $\partial\T_\ell$. For every $n\geq0$, it acts transitively on the set of vertices $v$ with $d(v,\mathcal{G})=n$.
\item The group $\PGL_2(\Z_\ell)$ has $v_0$ as a unique fixed vertex and for every $n\geq0$ it acts transitively on the sets of vertices $v$ with $d(v,v_0)=n$.
\item The group $\left\langle\Gamma_0(\ell\Z_\ell),\begin{pmatrix}0&1\\\ell&0\end{pmatrix}\right\rangle$ has a unique fixed edge $e_\infty=\{v_0,v_{-1}\}$. For every $n\geq0$, it acts transitively on the set of edges $e$ with $d(e,e_\infty)=n$.
\end{enumerate}
\end{proposition}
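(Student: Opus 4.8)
The plan is to reduce every assertion---once the trivial case $n=0$ and the statements about fixed cells are handled---to the $\PGL_2(\Z_\ell)$-equivariant dictionary \eqref{neighbords}, combined with two elementary transitivity facts over $\Z/\ell^n\Z$: that $\Z_\ell^\times$ surjects onto $(\Z/\ell^n\Z)^\times$, and that the translations $z\mapsto z+b$ act transitively on $\Z/\ell^n\Z$. First I would record the easy observations. A diagonal matrix acts on $\partial\T_\ell=\P^1(\Q_\ell)$ by $z\mapsto(\mathrm{unit})\cdot z$, so $A$ fixes $0$ and $\infty$; $\PGL_2(\Z_\ell)$ is by construction the stabilizer of $v_0$; and $\Gamma_0(\ell\Z_\ell)$ stabilizes the oriented edge $e_\infty$, while a direct lattice computation shows that $w\colonequals\begin{pmatrix}0&1\\\ell&0\end{pmatrix}$ exchanges $v_0$ and $v_{-1}$ and normalizes $\Gamma_0(\ell\Z_\ell)$, so that $\Gamma'\colonequals\langle\Gamma_0(\ell\Z_\ell),w\rangle$ stabilizes the unoriented edge $e_\infty$. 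For $n=0$, the only nontrivial point is that $A$ acts transitively on the vertices of $\mathcal{G}$, which follows from $\begin{pmatrix}\ell&0\\0&1\end{pmatrix}\in A$ shifting $v_m\mapsto v_{m+1}$.

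For $n\geq1$, statement (2) is immediate from \eqref{neighbords}: the vertices at distance $n$ from $v_0$ are identified, $\PGL_2(\Z_\ell)$-equivariantly, with $\P^1(\Z/\ell^n\Z)=\PGL_2(\Z_\ell)/\Gamma_0(\ell^n\Z_\ell)$, a transitive space. For (1), I would use that in a tree every vertex has a unique nearest point $v_m$ on the bi-infinite geodesic $\mathcal{G}=\{v_m\}_{m\in\Z}$; translating by the suitable power of $\begin{pmatrix}\ell&0\\0&1\end{pmatrix}$, which lies in $A$ and preserves both $\mathcal{G}$ and the metric, I reduce to $v_m=v_0$. The vertices at distance $n$ from $\mathcal{G}$ with nearest point $v_0$ are exactly the vertices at distance $n$ from $v_0$ whose geodesic from $v_0$ avoids the neighbours $v_1$ and $v_{-1}$; under \eqref{neighbords}, in which $v_{-1}$ and $v_1$ correspond to $\infty$ and $0$, these form the subset $(\Z/\ell^n\Z)^\times\subseteq\P^1(\Z/\ell^n\Z)$, on which $A\cap\PGL_2(\Z_\ell)=\bigl\{\begin{pmatrix}u&0\\0&1\end{pmatrix}:u\in\Z_\ell^\times\bigr\}$ acts by $z\mapsto uz$, transitively by the first elementary fact. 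For (3), I would remove $e_\infty$, which disconnects $\T_\ell$ into the subtree $T_0$ containing $v_0$ and the subtree $T_{-1}$ containing $v_{-1}$, interchanged by $w\in\Gamma'$; it therefore suffices to show $\Gamma_0(\ell\Z_\ell)$ acts transitively on the edges of $T_0$ at distance $n$ from $e_\infty$. Each such edge is determined by its endpoint farther from $v_0$, which ranges over the vertices of $T_0$ at distance $n$ from $v_0$, that is over the affine part $\Z/\ell^n\Z\subseteq\P^1(\Z/\ell^n\Z)$ of \eqref{neighbords}; the group $\Gamma_0(\ell\Z_\ell)$ preserves this affine part, and already the translations $\begin{pmatrix}1&b\\0&1\end{pmatrix}$ act transitively on it.

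For the uniqueness of the fixed cell in (2) and (3) I would propagate a hypothetical second fixed cell toward the base cell along the tree. If $\PGL_2(\Z_\ell)$ fixed a vertex $v\neq v_0$, it would fix the first edge of the path from $v_0$ to $v$, contradicting its transitivity on the $\ell+1\geq3$ neighbours of $v_0$. If $\Gamma'$ fixed an unoriented edge $e\neq e_\infty$, then its subgroup $\Gamma_0(\ell\Z_\ell)$---which fixes $v_0$ and $v_{-1}$ individually---would fix $e$; writing $e\subseteq T_0$ (the case $e\subseteq T_{-1}$ being symmetric under $w$) and letting $u_0$ be the endpoint of $e$ nearest $v_0$, the group $\Gamma_0(\ell\Z_\ell)$ would fix a neighbour of $v_0$ distinct from $v_{-1}$, which is impossible because it acts on those $\ell\geq2$ neighbours through the translations $z\mapsto z+\bar b$ of $\Z/\ell\Z=\P^1(\Z/\ell\Z)\setminus\{\infty\}$. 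The assertion about $A$ in (1) is not a uniqueness statement, so nothing further is required there.

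The only genuine labour I anticipate is the bookkeeping that matches \eqref{neighbords} with directions in the tree---identifying which points of $\P^1(\Z/\ell^n\Z)$ correspond to ``avoiding $v_{\pm1}$'' and to ``lying in $T_0$''---together with keeping straight that $w$ exchanges $v_0$ and $v_{-1}$, so that $\Gamma'$ stabilizes the unoriented edge $e_\infty$ but neither of its endpoints. Once that dictionary is fixed, each transitivity statement reduces to \eqref{neighbords} together with one of the two elementary facts over $\Z/\ell^n\Z$ (for (2), to \eqref{neighbords} alone), and each uniqueness statement reduces to the $n=1$ instance of the corresponding transitivity.
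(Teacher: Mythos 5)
Your argument is correct. The paper itself does not give a proof here: it simply cites Casselman (chapter I, sections 3--5 of \cite{Cass}), where these facts about the action of $A$, $\PGL_2(\Z_\ell)$, and the edge-stabilizer on the tree are established as part of the general theory of the building of $\PGL_2$. Your proposal instead gives a self-contained argument that reduces everything to the $\PGL_2(\Z_\ell)$-equivariant dictionary \eqref{neighbords} already present in the paper, plus two elementary facts over $\Z/\ell^n\Z$ (surjectivity of $\Z_\ell^\times\to(\Z/\ell^n\Z)^\times$, transitivity of translations on $\Z/\ell^n\Z$). The bookkeeping you flag as the ``genuine labour'' all checks out: under \eqref{neighbords} the paths avoiding $v_{-1}$ form the affine part $\Z/\ell^n\Z$, and those avoiding both $v_{\pm1}$ form $(\Z/\ell^n\Z)^\times$, whose sizes $\ell^n$ and $(\ell-1)\ell^{n-1}$ match the counts of edges in a half-tree and of vertices at distance $n$ from $\mathcal{G}$ with nearest point $v_0$; and because $\T_\ell$ is a tree, a path leaving $\mathcal{G}$ at $v_0$ never returns to it, so ``first step avoids $v_{\pm1}$'' really is the same as ``nearest point is $v_0$.'' The Atkin--Lehner element $w$ does exchange $v_0$ and $v_{-1}$ and normalizes $\Gamma_0(\ell\Z_\ell)$, and your uniqueness arguments by propagation along geodesics are sound. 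Two cosmetic remarks: a diagonal matrix acts on $\P^1(\Q_\ell)$ by $z\mapsto(a/d)z$ where $a/d$ is merely a nonzero scalar, not necessarily a unit (this does not affect the conclusion that $0,\infty$ are fixed); and the full image of $\Gamma_0(\ell\Z_\ell)$ on the $\ell$ remaining neighbours of $v_0$ is the affine group of $\Z/\ell\Z$, not only the translations, though as you say the translation subgroup already suffices to preclude a fixed point. In exchange for a bit more bookkeeping, your route has the virtue of using only material already set up in Section 2, whereas the paper's citation to \cite{Cass} brings in the building-theoretic framework.
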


\begin{proof} See chapter I, sections 3-5 in \cite{Cass}.
\end{proof}

Let $\mathcal{K}\subseteq M_2(\Q_\ell)$ be a quadratic $\Q_\ell$-algebra. We denote by $\O_{\mathcal{K}}$ the maximal compact and open subring of $\mathcal{K}$. The order of conductor $\ell^n$ is $\Z_\ell+\ell^n\O_{\mathcal{K}}$. We set $\mathfrak{m}_\mathcal{K}\colonequals \ell\O_\mathcal{K}$, $\O_{\mathcal{K}}^{(0)}=\O_{\mathcal{K}}^\times$ and for $n\geq1$ we define $\O_\mathcal{K}^{(n)}=1+\mathfrak{m}_{\mathcal{K}}^n$, the $n$-th principal subgroup of units.

\begin{corollary}\label{dynamics} Let $\mathcal{K}\subseteq M_2(\Q_\ell)$ be a quadratic $\Q_\ell$-algebra. Consider the action of $\overline{\mathcal{K}}^\times\colonequals\mathcal{K}^\times/\Q_\ell^*$ on $\T_\ell$ through the embedding  $\mathcal{K}^\times/\Q_\ell^*\subseteq PGL_2(\Q_\ell)$.
\begin{enumerate}
\item If $\mathcal{K}/\Q_\ell$ splits, then $\overline{\mathcal{K}}^\times$ fixes two points $x,y\in \partial\T_\ell$. For $n\geq0$, $\overline{\mathcal{K}}^\times$ acts transitively on the sets of vertices $w$ with $d(w,\mathcal{G}_x^y)=n$. Moreover, the stabilizer of $w$ is the image in $\overline{\mathcal{K}}^\times$ of the order of conductor $\ell^n$ in $\mathcal{K}$ where $d(v,\mathcal{G}_x^y)=n$.
\item If $\mathcal{K}/\Q_\ell$ is an unramified field extension, then $\overline{\mathcal{K}}^\times$ has a unique fixed vertex $v$ and for $n\geq0$, it acts transitively on the sets of vertices $v'$ with  $d(v',v)=n$. Moreover, if $d(v',v)=n$, the stabilizer of $v'$ is the image in $\overline{\mathcal{K}}^\times$ of $\O_\mathcal{K}^{(n)}$.
\item If $\mathcal{K}/\Q_\ell$ is a ramified field extension, then $\overline{\mathcal{K}}^\times$ has a unique fixed edge $e$ and for $n\geq0$, it acts transitively on the set of edges $e'$ with  $d(e,e')=n$. Moreover, if $d(e',e)=n$, the stabilizer of $e'$ is the image of the subgroup $\O_\mathcal{K}^{(2n)}$.
\end{enumerate}
\end{corollary}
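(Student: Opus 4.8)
The plan is to deduce Corollary~\ref{dynamics} from Proposition~\ref{action1} by reducing each case to the standard situation on the geodesic/vertex/edge axis and by identifying the stabilizers via a direct local computation inside $\mathcal{K}^\times$. The unifying idea is that a quadratic $\Q_\ell$-algebra $\mathcal{K}\subseteq M_2(\Q_\ell)$ has, according to its splitting type, a canonical ``axis'' in $\T_\ell$ fixed by $\overline{\mathcal{K}}^\times$, and after conjugating by a suitable element of $\PGL_2(\Q_\ell)$ we may assume $\mathcal{K}$ is in a normal form for which this axis is $\mathcal{G}$, $v_0$, or $e_\infty$, respectively; then transitivity on the spheres of radius $n$ around the axis follows from the corresponding assertion in Proposition~\ref{action1} once one checks that the relevant diagonal, or $\PGL_2(\Z_\ell)$, or $\langle\Gamma_0(\ell\Z_\ell),\left(\begin{smallmatrix}0&1\\\ell&0\end{smallmatrix}\right)\rangle$ subgroup is contained in the image of $\overline{\mathcal{K}}^\times$.

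\textbf{Case (1): $\mathcal{K}$ split.} Here $\mathcal{K}\cong\Q_\ell\times\Q_\ell$, and up to conjugation we may take $\mathcal{K}$ to be the diagonal subalgebra, so $\overline{\mathcal{K}}^\times = A$ and the two fixed ends are $0,\infty$ with axis $\mathcal{G}=\{v_n\}_{n\in\Z}$. Transitivity on $\{w: d(w,\mathcal{G})=n\}$ is exactly Proposition~\ref{action1}(1). For the stabilizer statement, pick the vertex $w=\left(\begin{smallmatrix}1&0\\0&\ell^{-n}\end{smallmatrix}\right)\!\cdot v_0$ (or a similar explicit representative at distance $n$ from $\mathcal{G}$), compute $\mathrm{Stab}_A(w)$ directly: a diagonal class $\mathrm{diag}(a,d)$ fixes $w$ iff conjugating the lattice $\ell^n\Z_\ell\times\Z_\ell$ back to itself, which forces $a/d \equiv 1 \pmod{\ell^n\Z_\ell}$ up to scalars; unwinding this congruence shows the stabilizer is the image in $\overline{\mathcal{K}}^\times$ of $\Z_\ell^\times\times(1+\ell^n\Z_\ell)$ composed with scalars, which is precisely the unit group of the order $\Z_\ell + \ell^n\O_\mathcal{K}$ of conductor $\ell^n$. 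One should state this with the reduced norm/trace parametrization of orders so that ``order of conductor $\ell^n$'' is unambiguous.

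\textbf{Cases (2) and (3): $\mathcal{K}$ an unramified (resp.\ ramified) field.} When $\mathcal{K}/\Q_\ell$ is unramified, $\O_\mathcal{K}$ is the valuation ring, $\mathcal{K}^\times$ stabilizes the unique lattice chain corresponding to $\O_\mathcal{K}$ as a $\Z_\ell$-lattice, giving a unique fixed vertex $v$; conjugating so that $v=v_0$, the image of $\O_\mathcal{K}^\times$ lands in $\PGL_2(\Z_\ell)=\mathrm{Stab}(v_0)$, and Proposition~\ref{action1}(2) gives transitivity on spheres around $v_0$. The stabilizer of a vertex $v'$ with $d(v',v)=n$ is then computed by intersecting $\overline{\mathcal{K}}^\times$ with $\Gamma_0(\ell^n\Z_\ell)$ (up to conjugation), and one checks via the explicit description of $\O_\mathcal{K}$ inside $M_2(\Z_\ell)$ that this intersection is the image of the $n$-th principal unit group $\O_\mathcal{K}^{(n)}=1+\mathfrak{m}_\mathcal{K}^n$ --- the key point being that $\ell^n \mid c$ (the lower-left entry) for an element of $\O_\mathcal{K}$ precisely when that element is congruent to a scalar modulo $\mathfrak{m}_\mathcal{K}^n$, since in the unramified case $\ell$ is a uniformizer. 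The ramified case is identical in structure but the fixed object is an edge (because a uniformizer $\pi$ of $\O_\mathcal{K}$ has $\pi^2 \sim \ell$, so $\pi$ swaps the two endpoints of $e$ and only $\pi^2 \sim \ell$-type elements fix it vertex-by-vertex), we conjugate so that this edge is $e_\infty$, invoke Proposition~\ref{action1}(3), and the exponent doubles to $2n$ because the distance in the tree from $e$ to $e'$ changes by one each time we multiply a generator by $\pi$, while the filtration step on units is indexed by powers of $\pi$, i.e.\ $\mathfrak{m}_\mathcal{K}^{2n}$ corresponds to $\ell^n$-scaled data.

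\textbf{Main obstacle.} The transitivity assertions are essentially immediate from Proposition~\ref{action1} once the normal forms are fixed, so the real work is the \emph{precise} identification of the point stabilizers with the unit groups $\O_\mathcal{K}^{(n)}$ (resp.\ $\O_\mathcal{K}^{(2n)}$, resp.\ units of the conductor-$\ell^n$ order). The delicate part will be bookkeeping the ramified case, getting the factor $2$ exactly right and handling the orientation of the fixed edge consistently with the convention $e_\infty=\{v_0,v_{-1}\}$; a clean way to avoid case-chasing is to phrase the computation uniformly in terms of the $\Z_\ell[\overline{\mathcal{K}}^\times]$-module structure of the lattices --- i.e.\ observe that for a lattice $\Lambda$ at distance $n$ from the axis, its stabilizer in $\mathcal{K}^\times$ is $(\Q_\ell^\times)\cdot\{u\in\O_\mathcal{K}^\times : u\Lambda=\Lambda\}$ and the latter set is literally $\O_\mathcal{K}^{(n)}$ by reading off which principal unit filtration step preserves a lattice that differs from the ``central'' one by $\mathfrak{m}_\mathcal{K}^n$. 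With that reformulation the three cases collapse to a single lemma about which units of $\O_\mathcal{K}$ preserve a given $\O_\mathcal{K}$-submodule of $\mathcal{K}$, which is standard.
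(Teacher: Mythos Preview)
Your reduction to normal forms is right, and case~(1) is indeed immediate from Proposition~\ref{action1}(1) because in that case $\overline{\mathcal{K}}^\times$ \emph{equals} the diagonal group $A$. But your opening claim --- that transitivity in cases~(2) and~(3) follows ``once one checks that the relevant \ldots\ subgroup is contained in the image of $\overline{\mathcal{K}}^\times$'' --- has the containment backwards, and this is a genuine gap. In the unramified case $\overline{\mathcal{K}}^\times$ is a \emph{proper} subgroup of $\PGL_2(\Z_\ell)$ (it is a torus of dimension one, the ambient group has dimension three), and likewise in the ramified case $\overline{\mathcal{K}}^\times$ is strictly smaller than $\langle\Gamma_0(\ell\Z_\ell),\left(\begin{smallmatrix}0&1\\\ell&0\end{smallmatrix}\right)\rangle$. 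Proposition~\ref{action1} tells you the larger group acts transitively on each sphere; it says nothing about whether the small subgroup $\overline{\mathcal{K}}^\times$ already does. The paper fills this by a separate argument: in case~(2) it checks directly that $\overline{\mathcal{K}}^\times$ acts transitively on $\P^1(\Z/\ell^n\Z)$ for every $n$, and in case~(3) it does an explicit orbit count on one component of $\T_\ell\setminus e_\infty$. Only after transitivity is established does the paper identify the stabilizers, and it does so by a cardinality count (orbit--stabilizer) rather than by the direct intersection you propose.

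A smaller slip: your explicit choice $w=\left(\begin{smallmatrix}1&0\\0&\ell^{-n}\end{smallmatrix}\right)\cdot v_0$ in case~(1) is the vertex $v_n$, which lies \emph{on} $\mathcal{G}$, not at distance $n$ from it; you want something like $\left(\begin{smallmatrix}1&1\\0&\ell^n\end{smallmatrix}\right)\cdot v_0$ instead. Your uniform module-theoretic suggestion at the end is a reasonable alternative route to the stabilizers, and if you carry it out carefully you could then recover transitivity by comparing the index $[\overline{\mathcal{K}}^\times:\mathrm{Stab}(v')]$ with the size of the sphere --- but that counting step is the actual content, and it is missing from your outline.
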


\begin{proof} The first item follows directly from Proposition \ref{action1} since there exists $g\in PGL_2(\Q_\ell)$ such that $\overline{\mathcal{K}}^\times=gAg^{-1}$ and in particular $\mathcal{G}_x^y=g\mathcal{G}$. Also working up to conjugation, for $(2)$ and $(3)$ it is enough to prove it for the algebra $\mathcal{K}$ (isomorphic to $\Q_\ell(\sqrt{d})$) given by elements of the form $\begin{pmatrix}x&y\\dy&x\end{pmatrix}$ where $d\in\Z$ is a non-zero square mod $\ell$ in case (2) and for case (3) $\ell\mid\mid d$. Now we need to observe that in case (2) (resp. case (3)) $\overline{\mathcal{K}}^\times$ is contained in $\PGL_2(\Z_\ell)$ (resp.  $\left\langle\Gamma_0(\ell\Z_\ell),\begin{pmatrix}0&1\\\ell&0\end{pmatrix}\right\rangle$). The statement about the fixed elements follows immediately from Proposition \ref{action1}. 

In case (2), transitivity follows since $\overline{\mathcal{K}}^\times$ acts transitively on $\P^1(\Z/\ell^n\Z)$ for every $n$. For (3) one can assume $\ell\mid\mid d$ and note that $\begin{pmatrix}0&1\\d&0\end{pmatrix}$ preserves $e_\infty$ by interchanching $v_0$ and $v_{-1}$. Therefore, it is enough to show that $\overline{\mathcal{K}}^\times\cap \Gamma_0(\ell\Z_\ell)$ acts transitively on the vertices in the connected component of $\T_\ell-e_{\infty}$ connected to $v_{-1}$ and at a given distance to $v_{-1}$. Now for $\begin{pmatrix}x&y\\dy&x\end{pmatrix}$ in the given intersection, $x\in\Z_\ell^\times$ and its action on $(1\,\,\,0)^t$ is given by $(1\,\,\,\, dyx^{-1})^t$. Hence this orbit in $\P^1(\Z/\ell^{n+1}\Z)$, which corresponds to the orbit of $v_{-(n+1)}$ in $\T_\ell-e_{\infty}$, has $\ell^n$ distinct elements which is exactly the number of vertices at distance $n$ from $v_{-1}$ in this component.

Once transitivity is proven, the fact about stabilizers follows since these algebras are commutative and by a counting argument: $\#\rquot{(\O_\mathcal{K}^\times/\O_\mathcal{K}^{(n)})}{(\Z_\ell^*/\Z_\ell^{(n)})}=\ell^{n-1}(\ell+1)$ in case (2):

\[\O_\mathcal{K}^\times/\O_\mathcal{K}^{(n)}\cong\mathbb{F}_{\ell^2}^*\times\O_\mathcal{K}^{(1)}/\O_\mathcal{K}^{(n)}\] which has cardinality $(\ell^2-1)\ell^{2(n-1)}$. Then the quotient $\#\rquot{(\O_\mathcal{K}^\times/\Z_\ell^\times)}{(\O_\mathcal{K}^{(n)}/\Z_\ell^{(n)})}$ has cardinality $(\ell^2-1)\ell^{2(n-1)}/(\ell-1)\ell^{n-1}=(\ell+1)\ell^{n-1},$ the number of points at distance $n$ from $v_0$.

In case (3), \[(\O_\mathcal{K}^\times/\O_\mathcal{K}^{(2n)})\cong\mathbb{F}_\ell^*\times\O_\mathcal{K}^{(1)}/\O_\mathcal{K}^{(2n)},\] which has cardinality $(\ell-1)\ell^{2n-1}$. Then \[(\O_\mathcal{K}^{(1)}/\Z_\ell^{(1)})/(\O_\mathcal{K}^{(2n)}/\Z_\ell^{(n)})=(\O_\mathcal{K}^{(1)}/\O_{\mathcal{K}}^{(2n)})/(\Z_\ell^{(1)}/\Z_\ell^{(n)}),\] has cardinality $\ell^{2n-1}/\ell^{n-1}=\ell^{n}$, the number of vertices in the connected component of $v_{-1}$ and at distance $n$ to it.
\end{proof}

\section{Adelic and $S$-arithmetic homogeneous spaces}\label{adelic}

Fix $p$ be a rational prime. Let $\A$ be the ring of adeles over $\Q$. Let $B$ be a quaternion algebra over $\Q$ that splits over $p$. Denote by $\GG$ the algebraic group $PB^\times\colonequals B^\times/\mathbb{G}_m$ of projectivized units of $B$. We use $\widetilde{\GG}$ for the algebraic group $B^{1}$ of units of reduced norm $1$ in $B$.

Let $A$ be a ring. If $v$ a place of $\Q$, we denote by $A_v$ the algebra $A\otimes\Z_v$ if $v$ is finite or $A\otimes\R$ if $v=\infty$. Let $S$ be a finite set of places of $\Q$ containing the place $p$. We use the notations $\A^{(S)}=\prod'_{w\not\in S}\Q_w$, $\A_f=\A^{(\infty)}$, $\A_\#=\A^{(p,\infty)}$ and $\Q_S=\prod_{v\in S}\Q_v$. Anagolously , we use $\widehat{\Z}$, $\widehat{\Z}_\#$ to denote the closure of $\Z$ in $\A_f$ and in $\A^{(p)}\subset\A_\#$ respectively. In the same direction, $\widehat{A}$ and $\widehat{A}_\#$ will denote $A\otimes\widehat{\Z}$ and $A\otimes\widehat{\Z}_\#$ respectively. Finally, if $t$ is an adelic element in $\A$, $t_f$, $t_\#$, $t_S$ and $t_v$ will denote the respective projection of $t$ in $\A_f$, $\A_\#$, $\Q_S$ and $\Q_v$. Similarly for $\A$-points over $\GG$ and $\widetilde{\GG}$.

The group $\widetilde{\GG}$ satisfies the following strong approximation theorem.

\begin{theorem}\label{strong} The subgroup $\sG(\Q)\sG(\Q_p)$ is dense in $\sG(\A)$.
\end{theorem}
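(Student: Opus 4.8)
The plan is to derive Theorem~\ref{strong} from the strong approximation theorem for simply connected semisimple groups, specialized to the norm-one group of the quaternion algebra $B$. First I would record the relevant structural facts about $\sG=\widetilde{\GG}=B^{1}$: as the group of reduced-norm-one units of a quaternion algebra over the field $\Q$, it is a connected semisimple linear algebraic $\Q$-group which is an inner form of $\mathrm{SL}_2$, hence absolutely almost simple (in particular $\Q$-simple) and simply connected. These are precisely the hypotheses under which the Kneser--Platonov strong approximation theorem applies; for the quaternionic case this goes back to Eichler and Kneser, and I would cite it in the form given in \cite{Voight}: if $S$ is a finite non-empty set of places of $\Q$ such that $\sG(\Q_S)=\prod_{v\in S}\sG(\Q_v)$ is non-compact, then $\sG(\Q)\cdot\sG(\Q_S)$ is dense in $\sG(\A)$.

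It then remains to check the non-compactness hypothesis for the singleton $S=\{p\}$. By assumption $B$ splits at $p$, i.e.\ $B\otimes_\Q\Q_p\cong M_2(\Q_p)$, so $\sG(\Q_p)=B^{1}(\Q_p)\cong\mathrm{SL}_2(\Q_p)$. This group is non-compact, since it contains the unbounded one-parameter unipotent subgroup $\left\{\begin{pmatrix}1&t\\0&1\end{pmatrix}:t\in\Q_p\right\}$. Because $\sG$ is $\Q$-simple, this single non-compact local factor already guarantees the hypothesis of the strong approximation theorem (there is no other $\Q$-simple factor that could be anisotropic, hence compact, over $\Q_p$), and applying the theorem with $S=\{p\}$ yields that $\sG(\Q)\sG(\Q_p)$ is dense in $\sG(\A)$, which is the assertion.

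I do not expect a genuine obstacle here, since the proof is essentially an invocation of a standard theorem; the points that require care are rather of a bookkeeping nature. First, the statement must be read for the \emph{simply connected} group $\sG=\widetilde{\GG}=B^{1}$ and not for the adjoint group $\GG=PB^{\times}$, which does not satisfy strong approximation. Second, one must use that $\sG$ is $\Q$-simple, so that a single non-compact local factor suffices to meet the hypothesis of the theorem. Third, the non-compactness of $\sG(\Q_p)$ is exactly the translation of the standing assumption that $B$ splits at $p$: had $B$ been ramified at $p$, the group $B^{1}(\Q_p)$ would be compact and strong approximation with $S=\{p\}$ would fail, so this hypothesis is used in an essential way.
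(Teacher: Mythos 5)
Your proposal is correct and is essentially the paper's own argument: the paper proves Theorem~\ref{strong} simply by citing the strong approximation theorem for the norm-one group of a quaternion algebra (Vign\'eras, Th\'eor\`eme 4.3, p.~81), which is exactly the Eichler--Kneser statement you invoke. Your additional verification that $\sG(\Q_p)\cong\mathrm{SL}_2(\Q_p)$ is non-compact because $B$ splits at $p$ is precisely the hypothesis check implicit in that citation, so there is no substantive difference in approach.
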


\begin{proof} See \cite{Vig}, Théoreme 4.3, p.81.
\end{proof}

In particular, if $W\subseteq \sG(\A^{(S)})$ is an open subgroup, then $\sG(\A)=\sG(\Q)\sG(\Q_S)W$.

\begin{corollary}\label{strongdec} Let $H\subseteq B^\times(\A^{(S)})$ be an open subgroup such that $\mathrm{nr}(B^\times(\A))=\mathrm{nr}(B^\times(\Q)B^\times(\Q_S)H)$. Then we also have the decomposition $B^\times(\A)=B^\times(\Q)B^\times(\Q_S)H$. In particular $\GG(\A)=\GG(\Q)\GG(\Q_S)\overline{H}$ with $\overline{H}$ the image of $H$ in $\GG(\A^{(S)})$.
\end{corollary}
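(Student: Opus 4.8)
The plan is to derive the decomposition $B^\times(\A) = B^\times(\Q)B^\times(\Q_S)H$ from the corresponding statement for $\widetilde{\GG} = B^1$ (Theorem \ref{strong}, together with the remark that $\sG(\A) = \sG(\Q)\sG(\Q_S)W$ for any open $W \subseteq \sG(\A^{(S)})$), by bridging the gap between norm-one elements and all units via the reduced norm $\mathrm{nr}\colon B^\times \to \mathbb{G}_m$. The hypothesis on $H$ is precisely the condition making this bridge work on the level of the norm. First I would take an arbitrary $g \in B^\times(\A)$ and look at $\mathrm{nr}(g) \in \A^\times$. By hypothesis $\mathrm{nr}(B^\times(\A)) = \mathrm{nr}(B^\times(\Q)B^\times(\Q_S)H)$, so there exist $\gamma \in B^\times(\Q)$, $h_S \in B^\times(\Q_S)$ and $h \in H$ with $\mathrm{nr}(g) = \mathrm{nr}(\gamma h_S h)$. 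Replacing $g$ by $(\gamma h_S h)^{-1}g$ (which is legitimate since we want to show $g$ lies in the set $B^\times(\Q)B^\times(\Q_S)H$, and that set is stable under left multiplication by its own elements — here one uses that $B^\times(\Q)$ normalizes nothing a priori, so more carefully: it suffices to show $(\gamma h_S h)^{-1}g \in B^\times(\Q)B^\times(\Q_S)H$, after which left-multiplying back by $\gamma h_S h$ and reordering factors finishes the argument), we reduce to the case $\mathrm{nr}(g) = 1$, i.e.\ $g \in \widetilde{\GG}(\A) = B^1(\A)$.

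Next, apply strong approximation for $\sG$: since $H \cap \sG(\A^{(S)})$ is an open subgroup of $\sG(\A^{(S)})$, the remark following Theorem \ref{strong} gives $\sG(\A) = \sG(\Q)\sG(\Q_S)\big(H \cap \sG(\A^{(S)})\big)$. Hence $g$, being in $\sG(\A)$, can be written $g = \gamma' h_S' h'$ with $\gamma' \in \sG(\Q) \subseteq B^\times(\Q)$, $h_S' \in \sG(\Q_S) \subseteq B^\times(\Q_S)$, and $h' \in H \cap \sG(\A^{(S)}) \subseteq H$. This shows $g \in B^\times(\Q)B^\times(\Q_S)H$. Undoing the reduction of the previous step (multiplying back by the element $\gamma h_S h$ and using that products of factors from $B^\times(\Q)$, $B^\times(\Q_S)$, $H$ can be reordered into the stated form, since $B^\times(\Q)$ commutes past $B^\times(\Q_S)$ and $H$ only up to elements that can be reabsorbed — this reordering is the one genuinely fiddly bookkeeping point) yields the claim for the original $g$. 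Finally, projecting everything to $\GG = PB^\times$ and noting that the image of $H$ is exactly $\overline{H}$ gives $\GG(\A) = \GG(\Q)\GG(\Q_S)\overline{H}$.

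I expect the main obstacle to be the bookkeeping in the reordering step rather than anything deep: one must check that $B^\times(\Q)B^\times(\Q_S)H$ is genuinely closed under multiplication, or at least arrange the two applications (once for the norm, once via strong approximation) so that the pieces assemble into the single factorization $\gamma h_S h$ with one factor from each group. The clean way to do this is to note that the set $X := B^\times(\Q)B^\times(\Q_S)H$ satisfies $X \cdot X \subseteq X$ whenever $H$ is normalized by $B^\times(\Q)B^\times(\Q_S)$ in the relevant adelic components — but since $H \subseteq B^\times(\A^{(S)})$ and $B^\times(\Q_S) \subseteq B^\times(\Q_S)$ act on disjoint sets of places away from the diagonal embedding of $B^\times(\Q)$, the only real commutation issue is between $B^\times(\Q)$ and $H$, which one handles by working place by place and absorbing the $\A^{(S)}$-components of $\gamma$ into $H$ after using openness. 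This is routine but must be written carefully; everything else follows formally from Theorem \ref{strong} and its stated corollary.
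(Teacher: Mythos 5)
Your overall strategy is the same as the paper's: use the norm hypothesis to reduce to an element of reduced norm $1$, apply strong approximation for $\sG$, and reassemble. But the execution has a genuine gap in the step you yourself flag as ``fiddly.'' By writing $(\gamma h_S h)^{-1}g = \gamma' h_S' h'$, you obtain $g = \gamma h_S h\, \gamma' h_S' h'$, and the factor $\gamma' \in B^\times(\Q)$ produced by strong approximation sits to the \emph{right} of $h \in H \subseteq B^\times(\A^{(S)})$ and of $h_S \in B^\times(\Q_S)$. Since $B^\times(\Q)$ is embedded diagonally, $\gamma'$ has nontrivial components at every place, so it does not commute with $h$ or with $h_S$, and there is no way to move it to the left. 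Your proposed fix --- ``absorbing the $\A^{(S)}$-components of $\gamma$ into $H$'' --- is not available: $H$ is merely an open subgroup of $B^\times(\A^{(S)})$, and the local components of a rational element are rigidly determined by that element; one cannot simply swap them for elements of $H$. In short, the set $B^\times(\Q)B^\times(\Q_S)H$ is not obviously a subgroup, so ``replace $g$ by $(\gamma h_S h)^{-1}g$'' is not a legitimate reduction.

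The paper sidesteps this entirely by multiplying on \emph{both} sides: it considers $x_\Q^{-1}\, t\, x_H^{-1} x_S^{-1}$, which has reduced norm $1$ because $\A^\times$ is abelian. Writing this as $\gamma' s_S w$ via strong approximation and solving for $t$ gives $t = x_\Q \gamma'\, s_S\, w\, x_S\, x_H$; the only commutation required is $w\,x_S = x_S\,w$, which is automatic because $w \in \sG(\A^{(S)})$ and $x_S \in B^\times(\Q_S)$ live at disjoint sets of places. This yields $t = (x_\Q\gamma')(s_S x_S)(w x_H) \in B^\times(\Q)B^\times(\Q_S)H$ with no reordering of $B^\times(\Q)$ past anything. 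You hint at this remedy (``arrange the two applications so that the pieces assemble''), but you should actually carry out the two-sided multiplication; as written, your one-sided reduction does not close.
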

\begin{proof} By the decomposition $\mathrm{nr}(B^\times(\A))=\mathrm{nr}(B^\times(\Q)B^\times(\Q_S)H)$, given $t\in\GG(\A)$, there exist $x_\Q\in B^\times(\Q)$, $x_S\in B^\times(\Q_S)$ and $x_H\in H$ such that $x_\Q^{-1}tx_H^{-1}x_{S}^{-1}\in\sG(\A)$. We obtain \[B^\times(\A)=B^\times(\Q)\sG(\A)B^\times(\Q_S)H\] which allows us to conclude since \[\sG(\A)=\sG(\Q)\sG(\Q_S)(H\cap \sG(\A^{(S)}))\subseteq B^\times(\Q)B^\times(\Q_S)H.\]
\end{proof}

Since $\GG$ has no non-trivial $\Q$-characters, the subgroup $\GG(\Q)$ is a lattice in $\GG(\A)$ by Theorem 5.5 in \cite{AGNT94}. We will use the notation $[\GG]$ for the locally compact space of finite measure $\lquot{\GG(\Q)}{\GG(\A)}$. If $\KK$ is a compact subgroup of $\GG(\A)$, we use $[\GG]_{\KK}$ to denote the locally compact space $\rquot{[\GG]}{\KK}$. The symbol  $[\cdot]_{\KK}$ denotes the natural projection $\GG(\A)\to [\GG]_{\KK}$ so that the class of $g\in\GG(\A)$ in $[\GG]_{\KK}$ is $[g]_{\KK}$.

We denote by G the group $\GG(\Q_S)$. Assume that $\KK$ is the projection in $\GG(\A)$ from an open subgroup $H\subseteq B^\times(\A^{(S)})$ satisfying the hypothesis of Corollary \ref{strongdec} and set $\Gamma=\GG(\Q)\cap \KK\subseteq G$. By the conclusion of the cited corollary, given $g\in \GG(\A)$ there exists $x\in\GG(\Q)$ such that $x g_{\#}\in\KK$. Like this we obtain an identification \begin{equation}\label{ident0}[\GG]_{\KK}\cong\lquot{\Gamma}{G},\end{equation} sending $[g]_{\KK}$ to the $\Gamma$-orbit of $xg_S$ with $x$ as before.

\section{The cycles}\label{cycles}
\subsection{Eichler orders}\label{Eorders}

Let $p$ and $B$ be as in the previous section and consider $N^-$ as the product of the primes where $B$ ramifies and $N^+$ as a positive integer coprime to $N^-p$. An Eichler $\Z[1/p]$-order $R$ in $B$ is the intersection of two uniquely determined, not necessarily distinct, maximal $\Z[1/p]$-orders in $B$. The level of $R$ is the index of $R$ inside any of the two maximal orders that define it. By a local global-principle (Theorem 9.1.1 in \cite{Voight}) $R$ is determined by its local components $R_q$ for $q\neq p$ so we focus on these local orders. Being also the intersection of two uniquely determined, not necessarily distinct, maximal $\Z_q$-orders in $B_q$ they are Eichler orders in $B_q$ and its level is defined analogously. 

If $q\nmid N^-$, $B$ splits at $q$ and we can use an identification $B_q\cong M_2(\Q_q)$ and the tree $\T_q$ to understand these type of orders. Indeed, maximal orders in $M_2(\Q_q)$ are in bijection with $\mathcal{V}(\T_q)$ by sending the class of a lattice $\Lambda$ to the order $\mathrm{End}_{\Z_q}(\Lambda)\colonequals\{x\in M_2(\Q_q)\mid x\Lambda\subseteq \Lambda\}$. Consequently, the Eichler order $R_q$ can be identified with a pair of vertices in $\T_q$.  If $R_q$ has level $q^n$, then the two corresponding vertices are at distance $n$. In this fashion, we can identify Eichler orders of level $q^n$ in  $B_q$ with paths in $\T_q$ of length $n$. We denote by $\eta_{R_q}$ the path that corresponds to $R_q$. Once given an orientation to this path (select a source $s(\eta_{R_q})$ and target $t(\eta_{R_q})$), the stabilizer in $\PGL_2(\Q_q)\cong PB_q^\times$ of this oriented path is $\overline{R}^\times_q$ (see \cite{Voight} chapter 23). Since an Eichler order of level $1$ (resp. $q$) correspond a vertex (resp. an edge), we denote in those special cases the associated path as $v_{R_q}$ and $e_{R_q}$, respectively.

\begin{remark} All of this shows that the set of maximal $\Z[1/p]$-orders in $B_q$ ($q\nmid N^-p$) forms the set of vertices of a tree which we also denote by $\T_q$ by an abuse of notation. The action of the group $PB_q^\times=B_q^\times/\Q_q^\times$ on $\T_q$ corresponds to conjugation. 
\end{remark}

\subsection{Oriented optimal embeddings}\label{Oembeddings} 

Fix an Eichler $\Z[1/p]$-order $R$ in $B$. Let $K$ be a quadratic field and $\O$ a $\Z[1/p]$-order in $K$. There exists a unique integer $d$ with $p^2\nmid d$ such that $\O=\O_d[1/p]$, where $\O_d\colonequals\Z\left[\frac{d+\sqrt{d}}{2}\right]$, the $\Z$-order of discriminant $d$. We assume that $(d,N^-)=1$ and if $\ell$ is a prime with $\ell^2\mid N^+$, then $(d,\ell)=1$. An algebra embedding $\psi\colon K\to B$ is said to be optimal (with respect to $\O$ and $R$) if $\psi(K)\cap R=\psi(\O)$. Necessarily $K$ is inert at primes diving $N^-$, split at primes dividing $N^+$ and imaginary in the case that $B$ is definite. 

For $\ell$ a prime, note that $\psi$ induces a morphism (which we denote by the same letter) $\psi\colon K_\ell\to B_\ell$ by tensoring with $\Q_\ell$. For $\ell\nmid N^-$ the map $\psi$ induces an action of $K_\ell^\times$ on $\T_\ell$ through composition with the natural map $B_\ell^\times\to PB_\ell^\times$. If $\overline{K}_\ell^\times$ denotes $K^\times_\ell/\Q_\ell^\times$, there is also an induced action of $\overline{K}_\ell^\times$. If $\ell\mid N^+$, since $\psi(K_\ell)$ is a split algebra over $\Q_\ell$, by Corollary \ref{dynamics} (1), the group $K_\ell^\times$ preserves a geodesic in $\T_\ell$ that will be denoted by $\mathcal{G}_\psi$. In addition, the optimality condition ensures that the stabilizer of $\eta_{R_\ell}$ in $K_\ell^\times$ is $\O_\ell^\times\Q_\ell^\times$.
 
An orientation of $R$ at $\ell\mid N^-$ is a ring morphism $\nu_\ell\colon R_\ell\to\mathbb{F}_{\ell^2}$. There exist two of them and we fix one. An orientation of $R$ at $\ell\mid N^+$ is the choice of a maximal order $S_\ell$ in $B_\ell$ containing $R_\ell$. By the first paragraph, this is equivalent to give an orientation to $\eta_{R_\ell}$. There are also two possibilities and we fix one. We orient $\eta_{R_\ell}$ by declaring that $s(\eta_{R_\ell})=v_{S_\ell}$.

For $\ell\mid N^-$, an orientation of $\O$ at $\ell$ is a ring morphism $\mu_\ell\colon\O\to\mathbb{F}_{\ell^2}$. There are two of them an we fix one. Let $\psi\colon K\to B$ an optimal embedding as before. We say that $\psi$ is oriented at $\ell\mid N^-$ if it respects the orientations of $\O$ and $R$ at $\ell$, i.e. if the diagram 
\begin{center}
\begin{tikzcd}
\O_\ell \arrow[r, "\psi"] \arrow[dr, "\mu_\ell"'] & R_\ell \arrow[d, "\nu_\ell"]\\ 
& \mathbb{F}_{\ell^2}
\end{tikzcd}
\end{center}
is commutative.

Take $\ell\mid N^+$ and assume that $\ell\nmid d$. An orientation of $\O$ at $\ell$ is a group isomorphism $\mu_\ell\colon K_\ell^\times/\O_\ell^\times\Q_\ell^\times\to\Z$. There are two of them and we fix one. Since $\ell\nmid d$, $\O_\ell$ is maximal and by Corollary \ref{dynamics} (1), the path $\eta_{R_\ell}$ is contained in $\mathcal{G}_\psi$. The transformation $T_\psi\colonequals \psi(\mu_\ell^{-1}(1))$ acts transitively on $\mathcal{G}_\psi$ as a translation. We say that $\psi$ is oriented if $T_\psi^n(v_{S_\ell})$ and $v_{S_\ell}$ are the two vertices connected by $\eta_{R_\ell}$. 
 
Keep the assumption that $\ell\mid N^+$ but assume now that $\ell\mid d$. Since $K$ splits at $\ell$, the prime $\ell$ does not divides the square-free part of $d$ so $\ell^{2n}\mid\mid d$ for some $n\geq1$. Remember that by our initial asumptions this implies that $\ell\mid\mid N^+$ so $\eta_{R_\ell}=e_{R_\ell}$ is an edge with stabilizer in $K_\ell^\times$ given by $\O_\ell^\times\Q_\ell^\times=\mathcal{O}_{K_\ell}^{(n)}\Q_\ell^\times$. Since $n\geq1$, the edge $e_{R_\ell}$ is not contained in $\mathcal{G}_\psi$. Its furthest vertex from $\mathcal{G}_\psi$ is located at distance $n$ and by consequence the other extreme is located at distance $n-1$. We will say that $\psi$ is oriented if the furthest vertex is $v_{S_\ell}$.

\begin{remark} In simple words, for $\ell\mid N^+$ bud $\ell\nmid d$, $\psi$ is oriented if $\eta_{R_\ell}$ is pointing in the same direction as the flow defined by the transformation $T_\psi$ in $\mathcal{G}_\psi$. On the other hand, if $\ell\mid d$, being oriented is equivalent to say that $e_{R_\ell}$ is pointing towards $\mathcal{G}_\psi$. 
\end{remark}

\begin{lemma} The group $\overline{R}^\times$ respects orientations when acting by conjugation on optimal embeddings.
\end{lemma}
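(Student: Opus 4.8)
The plan is to reduce the statement to a local assertion at each rational prime $\ell\mid N^-N^+$ and then check, case by case, that conjugating an oriented optimal embedding $\psi$ by $b\in R^\times$ produces another one. First I would record the formal points. Writing $\psi':=b\psi b^{-1}$, optimality is preserved because $bRb^{-1}=R$ and $b\psi(\O)b^{-1}=b(\psi(K)\cap R)b^{-1}=\psi'(K)\cap R$; moreover the orientation data attached to $\O$ (the maps $\mu_\ell$) is fixed once and for all, independently of the embedding, so only $\psi$ moves. Since $b$ and $b^{-1}$ lie in $R$ and every prime dividing $N^-N^+$ is different from $p$ (as $B$ splits at $p$ and $N^+$ is coprime to $N^-p$), the local component $b_\ell$ lies in $R_\ell^\times$, so the entire discussion at $\ell$ takes place inside $B_\ell$, resp. the tree $\T_\ell$. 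Because $\psi'=b_\ell\psi(\cdot)b_\ell^{-1}$ locally at $\ell$, conjugation by $b_\ell$ carries the geodesic $\mathcal{G}_\psi$ to $\mathcal{G}_{\psi'}=b_\ell\mathcal{G}_\psi$ and the translation $T_\psi$ to $T_{\psi'}=b_\ell T_\psi b_\ell^{-1}$.

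The one structural input I would isolate and then reuse in every case is that, by the description of Eichler orders in Section~\ref{Eorders} (following \cite{Voight}, ch.~23) together with Proposition~\ref{action1}, the group $\overline{R_\ell^\times}$ is \emph{exactly} the stabilizer of the oriented path $\eta_{R_\ell}$ in $\T_\ell$. An automorphism of a finite oriented path being trivial, it follows that $b_\ell$ fixes $\eta_{R_\ell}$ pointwise; in particular $b_\ell v_{S_\ell}=v_{S_\ell}$ and $b_\ell$ fixes the other endpoint of $\eta_{R_\ell}$.

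Then the three cases proceed as follows. For $\ell\mid N^-$, $R_\ell$ is the unique maximal order of the division algebra $B_\ell$ and $\nu_\ell$ factors through reduction to the residue field $\mathbb{F}_{\ell^2}$; commutativity of $\mathbb{F}_{\ell^2}$ gives $\nu_\ell(b_\ell x b_\ell^{-1})=\nu_\ell(x)$ for all $x\in R_\ell$, hence $\nu_\ell\circ\psi'=\nu_\ell\circ\psi=\mu_\ell$ on $\O_\ell$, so $\psi'$ is oriented at $\ell$. For $\ell\mid N^+$ with $\ell\nmid d$, let $\ell^m\|N^+$, so $\eta_{R_\ell}$ has length $m$; orientedness of $\psi'$ means $T_{\psi'}^{\,m}(v_{S_\ell})$ is the endpoint of $\eta_{R_\ell}$ other than $v_{S_\ell}$, and this holds since $T_{\psi'}^{\,m}(v_{S_\ell})=b_\ell T_\psi^{\,m}(b_\ell^{-1}v_{S_\ell})=b_\ell T_\psi^{\,m}(v_{S_\ell})=b_\ell\,t(\eta_{R_\ell})=t(\eta_{R_\ell})$. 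For $\ell\mid N^+$ with $\ell\mid d$ (so $\ell\|N^+$ and $\eta_{R_\ell}=e_{R_\ell}$ is an edge), orientedness says $v_{S_\ell}$ is the vertex of $e_{R_\ell}$ farthest from the geodesic; since $b_\ell^{-1}v_{S_\ell}=v_{S_\ell}$, we get $d(v_{S_\ell},\mathcal{G}_{\psi'})=d(b_\ell^{-1}v_{S_\ell},\mathcal{G}_\psi)=d(v_{S_\ell},\mathcal{G}_\psi)$, which is the maximal of the two distances because $\psi$ is oriented, so $\psi'$ is oriented at $\ell$. Assembling the three cases proves the lemma.

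The argument is essentially bookkeeping; the only genuinely structural ingredients are the identification of $\overline{R_\ell^\times}$ with the stabilizer of the oriented local path and, at the ramified primes, the commutativity of the residue field of $B_\ell$. I would expect the most delicate bookkeeping to be the case $\ell\mid N^+$ with $\ell\mid d$, where one must compare distances to the geodesic after the geodesic itself has been translated — but even this reduces to the single observation that $b_\ell$ is an isometry of $\T_\ell$ fixing $v_{S_\ell}$.
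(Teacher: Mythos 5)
Your proposal is correct, and its overall architecture matches the paper's: reduce to local statements at $\ell\mid N^-N^+$, then handle the three local cases. The split-prime cases ($\ell\mid N^+$, with or without $\ell\mid d$) are argued the same way as in the paper, via the fact that $b_\ell$ fixes the oriented path $\eta_{R_\ell}$ (hence fixes $v_{S_\ell}$ and the other endpoint) and is an isometry carrying $\mathcal{G}_\psi$ to $\mathcal{G}_{\psi'}$.

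Where you genuinely diverge is at $\ell\mid N^-$. You observe that $\nu_\ell\colon R_\ell\to\mathbb{F}_{\ell^2}$ is a ring morphism to a \emph{commutative} ring, so for $b_\ell\in R_\ell^\times$ one gets $\nu_\ell(b_\ell x b_\ell^{-1})=\nu_\ell(b_\ell)\nu_\ell(x)\nu_\ell(b_\ell)^{-1}=\nu_\ell(x)$ at once; conjugation-invariance of the orientation is then immediate. This is shorter and more conceptual than the paper's argument, which decomposes $B_\ell=\psi(K_\ell)+j\psi(K_\ell)$, writes $x=\alpha+j\beta$, and computes $xax^{-1}$ modulo the maximal ideal. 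The trade-off is that the paper's computation establishes a sharper fact — $x\in PB_\ell^\times$ respects the orientation at $\ell$ \emph{if and only if} $x\in\overline{R}_\ell^\times$ — and this converse is invoked later, in the transitivity part of Proposition~\ref{trans} (where ``since $\psi$ and $\psi'$ are oriented, $x\in\overline{R}_q^\times$''). Your argument suffices for the lemma as stated, but if one restructures the proofs so that the lemma only records the ``if'' direction, the ``only if'' has to be supplied separately when proving transitivity. One small side remark: you do not need the assertion that $\nu_\ell$ ``factors through reduction to the residue field''; only commutativity of the codomain is used.
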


\begin{proof} Let $\psi$ be an oriented optimal embedding. For $x\in\overline{R}^\times$, clearly $x\psi x^{-1}$ still satisfies the optimality condition so we focus on the orientations. For $\ell\mid N^-$, we show that $x\in PB^\times_\ell$ respects the orientations if and only if $x\in \overline{R}_\ell^\times$. There exits some $j\in B_\ell$ such that $B_\ell=\psi(K_\ell)+j\psi(K_\ell)$ with $j^2=p$ and $\alpha j=j\alpha'$ for every $\alpha\in \psi(K_\ell)$, where we denote by $\alpha'$ the conjugate of $\alpha$ in $\psi_\ell(K_\ell)$. Since $(d,N^-)=1$, $\O_\ell$ is maximal in $K_\ell$ and therefore $R_\ell=\psi(\O_{\ell})+j\psi(\O_{\ell})$ with maximal ideal $\m=\psi(p\O_{\ell})+j\psi(\O_\ell)$ (see \cite{Voight} Theorem 13.3.11). Write $x$ in the form $x=\alpha+j\beta$ with $\alpha$ and $\beta$ in $\psi(K_\ell)$. Then $\overline{x}=\alpha'+j\beta$ and $\mathrm{nr}(x)=N(\alpha)-pN(\beta)$ with $N(\cdot)=N_{\psi(K_\ell)/\Q_\ell}(\cdot)$. This allows us to compute for $a\in\psi(\O_\ell)$ the equality \[xax^{-1}=\frac{1}{N(\alpha)-pN(\beta)}(N(\alpha)a-pN(\beta)a'+j\alpha'\beta(a-a')).\] From this we see that $xax^{-1}$ is congruent to $a$ mod $\m$, if $\alpha\in\psi(\O_\ell)^\times$ and to $a'$ if not. The fact that $\alpha\in\psi(\O_\ell)^\times$ occurs if and only if $x$ belongs to $\overline{R}_\ell^\times$ allows us to conclude in this case.

For $\ell\mid N^+$, we show that $x\in \overline{R}_\ell^\times$ respects orientation. Note that $x{S_\ell}x^{-1}={S_\ell}$ since $\overline{R}_\ell^\times\subseteq\overline{S}_\ell^\times$ so that $xv_{S_\ell}=v_{S_\ell}$ and $x\eta_{R_\ell}=\eta_{R_\ell}$. Also note that $\mathcal{G}_{x\psi x^{-1}}=x\mathcal{G}_\psi$. Since $PB_\ell^\times$ preserves distances, these equalities are enough to show that $x\psi x^{-1}$ is oriented noting that when $\ell\nmid d$, $T_{x\psi x^{-1}}=xT_{\psi} x^{-1}$.
\end{proof}

\subsection{The action of $Pic(\O)$.}\label{Picaction}

Keep the notations from the previous sections. If $q\mid N^-$, there exists a unique maximal order in $B_q$ and therefore $R_q$ is maximal. Now theorem 13.3.11 in \cite{Voight} implies that $\mathrm{nr}(R_q^\times)=\Z_q^\times$ since it contains a copy of the ring of integers of the quadratic unramified extension of $\Q_q$. Let $q\nmid N^-p$, then $B$ splits at $q$ and $q^n$ is the level of $R_q$ if and only if $q^n\mid\mid N^+$. From Proposition 23.4.3 in \cite{Voight}, after choosing an isomorphism of $B_q\cong M_2(\Q_q)$, $R_q$ is conjugate to the order $\begin{pmatrix}\Z_q&\Z_q\\q^n\Z_q&\Z_q\end{pmatrix}$ so we also have $\mathrm{nr}(R_q^\times)=\Z_q^\times$ in this case.

From now on, $S$ will denote the finite set of places $\{\infty,p\}$ if $B$ is indefinite and the singleton $\{p\}$ otherwise. Let $\KK$ to be the image in $\GG(\A)$ of $\widehat{R}_\#^\times$ if $B$ is indefinite or the image of $\widehat{R}_\#^\times B_\infty^\times$ otherwise. Since $\mathrm{nr}(R_q^\times)={\Z}_q^\times$ for $q\neq p$, the open subgroup $\KK$ satisfies the hypothesis in Corollary \ref{strongdec}. Indeed, we have the equalities $\mathrm{nr}(B^\times(\A))=\A^\times$ or $\mathrm{nr}(B^\times(\A))=\A_f^\times\times\R_{>0}$ in the indefinite or definite case respectively (Lemma 13.4.9 in \cite{Voight}). Finally, the Hasse-Schilling theorem (Theorem 14.7.4 in \cite{Voight}) and the decomposition $\A^\times=\Q^\times\widehat{\Z}^\times\R^\times$ implies the claim.

Therefore, the identification \eqref{ident0} holds with $\Gamma=\GG(\Q)\cap\KK=\overline{R}^\times$ and we have \begin{equation}\label{ident}[\GG]_{\KK}\cong\lquot{\Gamma}{\GG(\Q_S)}.\end{equation} We recall that the identification is given by sending $[g]_{\KK}$ to the $\Gamma$-class of $xg_S$, where $x\in \GG(\Q)$ is such that $x g_{\#}\in\KK$.

Let $\TT_K$ be the algebraic torus $\mathrm{res}_{K/\Q}\mathbb{G}_m/\mathbb{G}_m$. By the local-global principle for lattices (\cite{Voight} Theorem 9.1.1) and using that $p$ is invertible in $\O$, we can identify $Pic(\O)$ as \[\lrquot{\TT_K(\Q)}{\TT_K(\A_f)}{\widehat{\O}^\times}=\lrquot{\TT_K(\Q)}{\TT_K(\A_{\#})}{\widehat{\O}^\times_{\#}}=\lrquot{\TT_K(\Q)}{\TT_K(\A)}{\widehat{\O}^\times_{\#}\TT_K(\Q_S)}.\]

We denote by $\mathrm{opt}(\O,R)$ the set of all oriented optimal embeddings $K\to B$ and by $[\mathrm{opt}(\O,R)]$ we mean the set of $\overline{R}^\times$-conjugacy classes in $\mathrm{opt}(\O,R)$. The class of an embedding $\psi$ is denoted by $[\psi]$. Now we define an action of $Pic(\O)$ on $[\mathrm{opt}(\O,R)]$. Let $\a\in Pic(\O)$ and assume that it is related to the class of $t\in \TT_K(\A)$. The embedding $\psi$ induces an adelic map $K\otimes\A\to B\otimes\A$ which in turns induces an injection \[\lrquot{\TT_K(\Q)}{\TT_K(\A)}{\widehat{\O}^\times_{\#}}\to[\GG]_{\KK}=[G].\] Remembering the identification \eqref{ident}, the class of $t\in \TT_K(\A)$ is sent to the $\Gamma$-class of $x\psi(t_S)$, where $x\in \GG(\Q)$ is such that $x\psi(t_{\#})\in\KK$. We define the action of $\a$ on $[\psi]$ as $\a\star[\psi]\colonequals [x\psi x^{-1}]$.

\begin{lemma} The action of $\a$ on the class of $\psi$ is well defined and gives rise to an action of $Pic(\O)$ on $[\mathrm{opt}(\O,R)]$
\end{lemma}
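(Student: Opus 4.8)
The plan is to verify, in order, the three things hidden in the word ``well defined'': that $x\psi x^{-1}$ is again an oriented optimal embedding, that $[x\psi x^{-1}]$ does not depend on the auxiliary choice of $x\in\GG(\Q)$ with $x\psi(t_\#)\in\KK$, that it does not depend on the representative $t\in\TT_K(\A)$ of $\a\in Pic(\O)$ nor on the representative $\psi$ of $[\psi]$, and finally that $\star$ is compatible with composition in $Pic(\O)$ so that it is an action.

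First I would record that such an $x$ exists: by Corollary \ref{strongdec} (applied as in Section \ref{Picaction}, where $\KK$ was shown to satisfy its hypotheses) we have $\GG(\A)=\GG(\Q)\GG(\Q_S)\KK$, and since $\psi(t_\#)\in\GG(\A_\#)\subseteq\GG(\A^{(S)})$ has trivial $S$-component, we can write $\psi(t_\#)=x^{-1}\cdot 1\cdot k$ with $x\in\GG(\Q)$, $k\in\KK$; thus $x\psi(t_\#)\in\KK$. To see $x\psi x^{-1}\in\mathrm{opt}(\O,R)$: optimality at every prime is a local condition, $(x\psi x^{-1})(K_\ell)\cap R_\ell = x\bigl(\psi(K_\ell)\cap x^{-1}R_\ell x\bigr)x^{-1}$, so I need $x^{-1}R_\ell x=R_\ell$ for all $\ell\neq p$; this follows because $x\psi(t_\#)\in\KK$ forces $x_\ell\in\overline{R}_\ell^\times\psi(t_\ell)\subseteq \overline{R}_\ell^\times\cdot\psi(\O_\ell^\times\Q_\ell^\times)\subseteq\overline{R}_\ell^\times$ at finite $\ell\neq p$ (using that $\psi$ optimal implies $\psi(K_\ell)\cap R_\ell=\psi(\O_\ell)$ so $\psi(\O_\ell^\times)\subseteq\overline{R}_\ell^\times$), and $\overline{R}_\ell^\times$ normalizes $R_\ell$. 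At $\ell=p$, $R_p$ is stable under all of $\GG(\Q_p)$ up to the relevant conventions, so there is nothing to check. The same containment $x_\ell\in\overline{R}_\ell^\times\cdot\psi(K_\ell^\times)$ shows, via the previous Lemma (``$\overline{R}^\times$ respects orientations'') together with the obvious fact that conjugation by $\psi(K_\ell^\times)$ preserves the data defining the orientation of $\psi$ at $\ell$ (it fixes $\psi(K_\ell)$ pointwise, fixes $\mathcal{G}_\psi$, and $T_{x\psi x^{-1}}=xT_\psi x^{-1}$), that $x\psi x^{-1}$ is oriented. I would isolate this last compatibility as a short sub-claim since it is reused below.

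Independence of the choice of $x$: if $x,x'\in\GG(\Q)$ both satisfy $x\psi(t_\#),x'\psi(t_\#)\in\KK$, then $x'x^{-1}\in\GG(\Q)\cap\KK=\overline{R}^\times=\Gamma$, and $(x'\psi x'^{-1})=(x'x^{-1})(x\psi x^{-1})(x'x^{-1})^{-1}$, so the two embeddings are $\overline{R}^\times$-conjugate, giving the same class in $[\mathrm{opt}(\O,R)]$. Independence of the representative $t$: replacing $t$ by $\gamma t u$ with $\gamma\in\TT_K(\Q)$ and $u\in\widehat{\O}_\#^\times\TT_K(\Q_S)$ changes $\psi(t_\#)$ to $\psi(\gamma)_\#\,\psi(t_\#)\,\psi(u_\#)$; here $\psi(\gamma)\in\GG(\Q)$ and $\psi(u_\#)\in\widehat{\O}_\#^\times\subseteq\widehat R_\#^\times$ maps into $\KK$, so one may take the new auxiliary element to be $x\psi(\gamma)^{-1}$, and since $\psi$ and $x\psi x^{-1}$ are conjugated by elements fixing $\psi(K)$ the resulting class is unchanged — more precisely $ (x\psi(\gamma)^{-1})\psi(x\psi(\gamma)^{-1})^{-1}=x\psi x^{-1}$ because $\psi(\gamma)$ commutes with $\psi$. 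Independence of the representative $\psi$ of $[\psi]$: replace $\psi$ by $r\psi r^{-1}$ with $r\in\overline{R}^\times$; then $(r\psi r^{-1})((r t r^{-1})_\#)=r\psi(t_\#)r^{-1}$ — but $t\in\TT_K$ acts through $\psi$, so really the adelic element attached to $r\psi r^{-1}$ and the same $\a$ is $(r\psi r^{-1})(t_\#)=r\,\psi(t_\#)\,r^{-1}$, and $x r\in\GG(\Q)$ works as auxiliary element since $xr\cdot r\psi(t_\#)r^{-1}\cdot$? I would be careful here and instead argue at the level of the injection $Pic(\O)\hookrightarrow[\GG]_{\KK}$: the point $x\psi(t_S)\Gamma$ only depends on $\a$ and $\psi$, and conjugating $\psi$ by $r\in\overline{R}^\times$ conjugates the whole picture by $r$, which acts trivially on $[\GG]_{\KK}=\lquot{\Gamma}{G}$ from the appropriate side, so the resulting conjugacy class $[x\psi x^{-1}]$ is unchanged. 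Finally, for the action axiom I would check $(\a\b)\star[\psi]=\a\star(\b\star[\psi])$ by choosing $t,s\in\TT_K(\A)$ representing $\a,\b$ and $x,y\in\GG(\Q)$ with $x\psi(t_\#)\in\KK$ and $y\,(x\psi x^{-1})(s_\#)\in\KK$; then $yx\in\GG(\Q)$, $(yx)\psi((ts)_\#)=yx\psi(t_\#)\psi(s_\#)$, and one checks $yx\psi(t_\#)\psi(s_\#)=y(x\psi(t_\#)x^{-1})\cdot x\psi(s_\#)\in\KK$ using $x\psi(t_\#)x^{-1}\in\KK$ and $yx\psi(s_\#)x^{-1}=y(x\psi x^{-1})(s_\#)\in\KK$, so $yx$ is a valid auxiliary element for $\a\b$, and $(yx)\psi(yx)^{-1}=y(x\psi x^{-1})y^{-1}$, which is exactly $\a\star(\b\star[\psi])$; the identity of $Pic(\O)$ is represented by $t=1$, forcing $x\in\Gamma$ and $[\psi]\mapsto[\psi]$.

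The main obstacle is bookkeeping the two-sided nature of the orbit spaces: $Pic(\O)$ is a double-coset space and $[\GG]_\KK\cong\lquot{\Gamma}{G}$ is a one-sided quotient, and the torus acts through $\psi$, so one must be scrupulous about which side each group element multiplies on and about the fact that ``$x$ depends on $t$ and on $\psi$'' simultaneously — all the independence statements above have to be threaded through this. The genuinely mathematical input is thin: it is the previous Lemma (that $\overline{R}^\times$ respects orientations), the fact that $\psi$ optimal gives $\psi(\widehat\O_\#^\times)\subseteq\widehat R_\#^\times$ so that $x$ can always be chosen with $x_\ell\in\overline{R}_\ell^\times$ at the bad primes, and Corollary \ref{strongdec} for existence of $x$; everything else is the kind of cocycle chase that I would write out but not belabor.
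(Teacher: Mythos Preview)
Your overall strategy matches the paper's, but the optimality step contains a genuine error. You claim that $x_\ell\in\overline{R}_\ell^\times\psi(t_\ell)\subseteq\overline{R}_\ell^\times\psi(\O_\ell^\times\Q_\ell^\times)\subseteq\overline{R}_\ell^\times$, i.e.\ that $t_\ell\in\O_\ell^\times\Q_\ell^\times$ for every $\ell\neq p$. This is false: $t$ is an arbitrary idele representing the class $\a\in Pic(\O)$, and if $\a$ is nontrivial there must be primes where $t_\ell\notin\O_\ell^\times\Q_\ell^\times$ (otherwise $\a$ would be trivial). At those primes $x_\ell\notin\overline{R}_\ell^\times$, so your deduction $x^{-1}R_\ell x=R_\ell$ fails. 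The paper supplies precisely the missing idea: one conjugates not by $x$ alone but by $x\psi(t_\ell)$, which \emph{does} lie in $\overline{R}_\ell^\times$ by construction, and uses that $\psi(t_\ell)$ commutes with all of $\psi(K_\ell)$; thus conjugating $\psi(K_\ell)\cap R_\ell=\psi(\O_\ell)$ by $x\psi(t_\ell)$ yields $x\psi(K_\ell)x^{-1}\cap R_\ell=x\psi(\O_\ell)x^{-1}$. Equivalently, at each $\ell$ write $x_\ell=r_\ell\psi(t_\ell)^{-1}$ with $r_\ell\in\overline{R}_\ell^\times$ and observe that locally $x\psi x^{-1}=r_\ell\psi r_\ell^{-1}$; optimality (and orientation) then follow from $r_\ell\in\overline{R}_\ell^\times$. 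Your orientation argument already uses only $x_\ell\in\overline{R}_\ell^\times\psi(K_\ell^\times)$ and is essentially this, so it is fine; it is the optimality paragraph that needs rewriting.

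A smaller slip: in the action-axiom check you invoke ``$x\psi(t_\#)x^{-1}\in\KK$'', which is not among your hypotheses (you only have $x\psi(t_\#)\in\KK$). The factorization that works with the hypotheses you actually wrote down is
\[
yx\,\psi((st)_\#)=\bigl(yx\psi(s_\#)x^{-1}\bigr)\bigl(x\psi(t_\#)\bigr)\in\KK\cdot\KK=\KK,
\]
which is exactly the paper's computation up to relabelling.
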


\begin{proof}
It is well defined because the class of $\psi(t_{\#})$ depends only on the class of $\a$ and on the other hand conjugating $\psi$ by an element of $\gamma\in \Gamma$ changes $x$ by $x\gamma^{-1}$ since $\gamma\in \KK$. After conjugating $\gamma\psi\gamma^{-1}$ by $x\gamma^{-1}\gamma$ we obtain again $x\psi x^{-1}$. 

Since $x\in \GG(\Q)$, $x\psi x^{-1}$ is indeed a new embedding that send $K\to B(\Q)$. For the optimality it is enough to show that for $q\neq p$ one has $x\psi(K_q)x^{-1}\cap R_q=x\psi(\O_q)x^{-1}$. We know $\psi(K_q)\cap R_q=\psi(\O_q)$ but since $\mathrm{im}(\psi)$ is commutative, after conjugating by $x\psi(t_q)$ we have $x\psi(K_q)x^{-1}\cap (x\psi(t_q))R_q(x\psi(t_q))^{-1}=x\psi(\O_q)x^{-1}$ and now we use $(x\psi(t_q))R_q(x\psi(t_q))^{-1}=R_q$ since $x\psi(t_q)\in \overline{R}_q^\times$.

To see that $x\psi x^{-1}$ is oriented we have to check locally at $\ell\mid N^-N^+$. If $\ell\mid N^-$, $\psi(\overline{K}_\ell)^\times\subseteq \overline{R}_\ell^\times$ so $x\in\overline{R}_\ell^\times$ and therefore $x\psi x^{-1}$ is oriented. If $\ell\mid N^+$, observe that $\psi(t_q)\mathcal{G}_\psi=\mathcal{G}_\psi$ by defintion of $\mathcal{G}_\psi$. Also, we have $x\psi(t_\ell)\eta_{R_\ell}=\eta_{R_\ell}$ since $x\psi(t_\ell)\in\overline{R}_\ell^\times$. Then, since $x\psi(t_\ell)$ preserves distances, $\eta_{R_{\ell}}$ is at the same distance to $x\mathcal{G}_\psi=\mathcal{G}_{x\psi x^{-1}}$. Since $x\psi(t_\ell)$ preserves the orientation of $\eta_{R_\ell}$, it follows that $x\psi x^{-1}$ is oriented noting in the case $\ell\nmid d$ that $\psi(t_\ell)T_\psi\psi(t_\ell)^{-1}=T_\psi$.

Suppose that $\a$ and $\b$ are related to the adelic elements $t$ and $s$. Consider $x,y\in \GG(\Q)$ such that $x\psi(s_\#), yx\psi(t_\#)x^{-1}\in\KK$. The equality $yx\psi(t_\#s_\#)=yx\psi(t_\#)x^{-1}x\psi(s_\#)\in \KK$ shows that $\a\star(\b\star[\psi])=\a\b\star[\psi]$.

\end{proof}

\begin{proposition}\label{trans} The group $Pic(\O)$ acts simply transitively on $[\mathrm{opt}(\O,R)]$.
\end{proposition}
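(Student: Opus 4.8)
The plan is to prove freeness (the stabiliser of every class is trivial) and transitivity separately; freeness is the formal half, transitivity is where the geometry of Section~\ref{thetree} enters. For freeness, suppose $\a\in Pic(\O)$, represented by $t\in\TT_K(\A)$, fixes the class $[\psi]$. Unwinding the definition of the action there are $x\in\GG(\Q)$ with $x\psi(t_\#)\in\KK$ and $\gamma\in\Gamma=\overline{R}^\times$ with $x\psi x^{-1}=\gamma\psi\gamma^{-1}$, so that $\gamma^{-1}x$ centralises $\psi(K)$. Since the centraliser in $B$ of the maximal subfield $\psi(K)$ is $\psi(K)$ itself, $\gamma^{-1}x=\psi(u)$ for some $u\in\overline{K}^\times=\TT_K(\Q)$. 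Then $\psi(ut_\#)=\gamma^{-1}\bigl(x\psi(t_\#)\bigr)\in\KK=\widehat{R}^\times_\#$, because $\gamma\in\Gamma\subseteq\KK$. Reading this place by place and using optimality, which gives $\psi(K_q)^\times\cap R_q^\times=\psi(\O_q^\times)$ for every $q\neq p$, one obtains $ut_\#\in\widehat{\O}^\times_\#$. Hence in $Pic(\O)=\lrquot{\TT_K(\Q)}{\TT_K(\A_\#)}{\widehat{\O}^\times_\#}$ the class of $t$ equals that of $t_\#=u^{-1}(ut_\#)$, which is trivial; so $\a$ is trivial.

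For transitivity (which is non-vacuous because the necessary local conditions on $K$ make $\mathrm{opt}(\O,R)$ non-empty) I would take $\psi,\psi'\in\mathrm{opt}(\O,R)$ and apply the Skolem--Noether theorem over $\Q$ to the field $K$ to produce $g\in\GG(\Q)$ with $\psi'=g\psi g^{-1}$. The goal is to realise $g$ via the $Pic(\O)$-action, i.e.\ to find $t\in\TT_K(\A)$ with $g\psi(t_\#)\in\KK$; then the class of $t$ sends $[\psi]$ to $[g\psi g^{-1}]=[\psi']$. Since $\KK=\widehat{R}^\times_\#$, this is equivalent to the following purely local statement at each finite $q\neq p$: $\psi_q$ and $\psi'_q=g_q\psi_q g_q^{-1}$ are $\overline{R}_q^\times$-conjugate (equivalently $g_q\in\overline{R}_q^\times\,\psi_q(\overline{K}_q^\times)$), in which case one extracts $t_q\in\overline{K}_q^\times$ with $g_q\psi_q(t_q)\in\overline{R}_q^\times$. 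For all but finitely many $q$ one has $g_q\in\overline{R}_q^\times$ directly, by integrality of the fixed element $g$, so one may take $t_q=1$ there; assembling the remaining $t_q$ into $t=(t_q)_q\in\TT_K(\A_\#)$ finishes the argument.

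So the heart of the matter is the local claim: for every finite $q\neq p$, any two oriented optimal embeddings $K_q\to B_q$ relative to $(\O_q,R_q)$ are $\overline{R}_q^\times$-conjugate, and this is exactly what Corollary~\ref{dynamics} and the orientation bookkeeping of Section~\ref{Oembeddings} are built to deliver. I would argue by cases. If $q\nmid N^-N^+$, then $R_q$ is maximal and Corollary~\ref{dynamics} attaches to $\psi_q$ a fixed vertex, a geodesic at a prescribed distance from $v_0$, or a fixed edge according to whether $K_q$ is an unramified field, split, or a ramified field; optimality forces $v_0$ to sit in the prescribed position relative to this datum, so the Skolem--Noether conjugator, after correction by a translation from $\psi_q(\overline{K}_q^\times)$ when $K_q$ is split, may be taken in $\mathrm{Stab}(v_0)=\PGL_2(\Z_q)=\overline{R}_q^\times$ --- the possible swap of the two ends being absorbed because the relevant reflection of the geodesic fixing $v_0$ already lies in $\PGL_2(\Z_q)$. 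If $q\mid N^-$, then $B_q$ is a division algebra with a unique maximal order $R_q$, so $g_q$ automatically normalises $R_q$; the computation in the proof that $\overline{R}^\times$ respects orientations shows that conjugation by $g_q$ acts trivially, rather than by Frobenius, on the residue field $\mathbb{F}_{q^2}$ exactly when $g_q\in\overline{R}_q^\times$, and since $\psi_q$ and $\psi'_q$ both induce the fixed orientation $\mu_q$ this forces $g_q\in\overline{R}_q^\times$. If $q\mid N^+$, then optimality and Corollary~\ref{dynamics}(1) pin down the position of $\eta_{R_q}$ relative to $\mathcal{G}_{\psi_q}$ (it lies on the geodesic when $q\nmid d$, and off it at the prescribed distances $n,n-1$ when $q\mid d$), $\overline{R}_q^\times$ acts transitively on the resulting data, and of the two $\overline{R}_q^\times$-orbits of optimal embeddings that persist --- interchanged by the Atkin--Lehner element normalising $R_q$ --- the orientation condition of Section~\ref{Oembeddings} (agreement of $T_\psi$ with $\eta_{R_q}$ when $q\nmid d$, and ``$e_{R_q}$ points toward $\mathcal{G}_{\psi_q}$'' when $q\mid d$) singles out exactly one.

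I expect the main obstacle to be this local claim, and within it the uniform verification that fixing one orientation of $\O$ and of $R$ reduces the $\overline{R}_q^\times$-conjugacy classes of optimal embeddings to a single one at each bad place --- in particular keeping straight, in the split cases, which reflections or Weyl/Atkin--Lehner elements do or do not land in $\overline{R}_q^\times$, since this is exactly what separates the $q\nmid d$ and $q\mid d$ situations. Freeness, by contrast, is a formal consequence of optimality together with the double-coset description of $Pic(\O)$, and the global-to-local reduction in the transitivity step (that $g_q\in\overline{R}_q^\times$ for almost all $q$) is routine.
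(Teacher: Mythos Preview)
Your proposal is correct and follows essentially the same route as the paper: freeness via the fact that the centraliser of $\psi(K)$ in $B$ is $\psi(K)$ itself combined with optimality, and transitivity via Skolem--Noether over $\Q$ followed by a prime-by-prime verification that $g_q\in\overline{R}_q^\times\psi_q(\overline{K}_q^\times)$ using Corollary~\ref{dynamics} and the orientation bookkeeping of Section~\ref{Oembeddings}. You are in fact slightly more careful than the paper at one point: at primes $q\nmid N^-N^+$ you correctly allow for all three local types of $K_q$ (split, unramified field, ramified field), whereas the paper's exposition writes as though $K_q$ were always split there --- the argument still goes through, of course, via parts (2) and (3) of Corollary~\ref{dynamics}.
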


\begin{proof}
To show that the action is simple take $\gamma\in\Gamma$ such that $\gamma^{-1}x\psi (\gamma^{-1}x)^{-1}=\psi$. In particular $\gamma^{-1}x$ belongs to the normalizer of $\psi(K)^\times$ which is $\psi(K)^\times\cup s\psi(K)^\times$ with any $s\in B^\times(\Q)$ such that $s\psi s^{-1}=\psi\circ\sigma$, where $\sigma$ is the non-trivial $\Q$-automorphism of $K$. Since $\gamma^{-1}x$ normalized each value of $\psi$ we must have $\gamma^{-1}x\in\psi(K)^\times$. See Corollaire 2.3, p.6 in \cite{Vig}.This implies $x=\gamma\psi(t)$ with $t\in K^\times$ and therefore $\psi(tt_{\#})\in \gamma\KK=\KK$. This shows that $\a$ is trivial in $Pic(\O)$ and therefore the action is simple.

Now we show the transitivity. Let $\psi$ and $\psi'$ be two optimal embeddings for $\O$. As a consequence of Skolem-Noether there exists $x\in \GG(\Q)$ such that $\psi'=x\psi x^{-1}$. To prove transitivity we need to show that there exists an adelic element $t$ such that $x\psi(t_{\#})\in \KK$. Since both $\psi$ and $\psi'$ are optimal for $\O$, we have for $q\neq p$ \begin{equation}\label{stab}\psi(\O_q)=\psi(K_q)\cap R_q=\psi(K_q)\cap x^{-1}R_qx.\end{equation}

If $q\mid N^-$, we have $\psi(K_q)^\times\subseteq\overline{R}_q^\times$. Then the condition $x\psi(t_q)\in\overline{R}_q^\times$ is equivalent to $x\in \overline{R}_q^\times$. Since $\psi'=x\psi x^{-1}$ and $\psi$ and $\psi'$ are oriented, we have indeed that $x\in \overline{R}_q^\times$. 

If $q\nmid N^-$ remember that $\psi$ induces an action of $K_q^\times$ on $\T_q$ and in this case $\psi(K_q)$ is a split algebra over $\Q_q$ so that there exists a unique geodesic $\mathcal{G}_\psi$ preserved by this action. The relation \eqref{stab} shows that $\eta_{R_q}$ and $x^{-1}\eta_{R_q}$ have the same stabilizer in $K_q^\times$ under this action. If in addition $q\nmid N^+$, $\eta_{R_q}=v_{R_q}$ and $x^{-1}\eta_{R_q}=x^{-1}v_{R_q}$ are actually vertices and Corollary \ref{dynamics} shows they are at the same distance to $\mathcal{G}_\psi$ (since same stabilizer implies same distance) and then they are in the same orbit. Therefore there exists some $t_q\in K_q^\times$ satisfying $x\psi(t_q)v_{R_q}=v_{R_q}$. This implies $x\psi(t_q)\in \overline{R}_q^\times$. 

Now assume that $q^n\mid\mid N^+$ and $q\nmid d$. The relation (\ref{stab}) and the fact that $\psi$ and $\psi'$ are oriented implies that $\eta_{R_q}$ and $x^{-1}\eta{R_q}$ are contained in $\mathcal{G}_\psi$ and they point in the same direction as the flow defined by $T_\psi$. Therefore there exists some $n\in\Z$ such that $xT_\psi^n\eta_{R_q}=\eta_{R_q}$ as an oriented path. Since $T_\psi^n=\psi(\mu_q^{-1}(n))$ we have that $x\psi(t_q)\in\overline{R}_q^\times$ for any $t_q\in K_\ell^\times$ lying in $\mu_q^{-1}(n)$ when taken mod $\O_q^\times\Q_q^\times$.

Finally if $q\mid\mid N^+$ and $q\mid d$, again the condition of being oriented embeddings and relation (\ref{stab}) implies that the edges $\eta_{R_q}=e_{R_q}$ and $x^{-1}\eta_{R_q}=x^{-1}e_{R_q}$ are at the same distance to $\mathcal{G}_\psi$ and pointing in the same direction relative to $\mathcal{G}_\psi$. Corollary \ref{dynamics} (1) implies that they are in the same orbit under the action of $K_q^\times$ and so there exists $t_q\in K_q^\times$ such that $x\psi(t_q)e_{R_q}=e_{R_q}$ as oriented edges, implying that $x\psi(t_q)\in\overline{R}_q^\times$.
\end{proof}

\subsection{The cycles $\Delta_\psi$}\label{cyclesdef}

Keep the previous notations and fix $H$ a subgroup of $G$ that is the image of the units of an algebra in $B_S$ isomorphic to $K_S$.  Let $\psi\in opt(\O,R)$, we denote by $\Delta_\psi$ the image in $\lquot{\Gamma}{G}$ of a set of the form $\psi(K_S)^\times g$ with $g\in G$ such that $\psi(K_S)^\times=gHg^{-1}$ (such $g$ exists by Skolem-Noether). Since $H$ has index $2^{\#S}$ in its normalizer in $G$, there are only $2^{\#S}$ such sets and they depend only on the class of $\psi$ in $[opt(\O,R)]$. In the following sections we will make use of the action of $G$ on certain spaces to select a single $\Delta_\psi$ among the $2^{\#S}$ possibilities.

Let $h'=h'(\O)$ be the cardinality of $Pic(\O)$ and consider $t_1,...,t_{h'}$ ideles in $\widehat{K}^\times_\#\subseteq\A_K^\times$ such that $\widehat{K}^\times_\#=\bigsqcup_{i=1}^{h'}K^* t_i^{-1}\widehat{\O}^\times_\#$. Note that this is equivalent to ask for a full set of representatives for $Pic(\O)$ under its identification with $\lrquot{K^\times}{\widehat{K}^\times_\#}{\widehat{\O}_\#^\times}$. 

\begin{proposition}\label{projection} Fix $\psi_0\in opt(\O,R)$. Then, the projection of $\TT_{\psi_0} g$ in $[\GG]_{\KK}=\lquot{\Gamma}{G}$ is of the form $\bigsqcup_{\psi\in[opt(\O,R)]}\Delta_\psi$.
\end{proposition}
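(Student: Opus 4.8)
The plan is to unwind the definitions and use the simply transitive action of $Pic(\O)$ established in Proposition \ref{trans}. First I would recall that $\TT_{\psi_0}$ denotes the image of $\psi_0(K_S)^\times$ in $G = \GG(\Q_S)$, so that $\TT_{\psi_0}g$ is exactly the set projecting to $\Delta_{\psi_0}$ when $g$ is chosen so that $\psi_0(K_S)^\times = gHg^{-1}$. The point is that the full $\TT_{\psi_0}$-orbit, pushed all the way into $[\GG]_\KK$, picks up contributions not just from $\psi_0$ but from every $\overline{R}^\times$-conjugacy class of oriented optimal embedding, each appearing exactly once. To see this, I would use the adelic description: by the identification \eqref{ident}, a point $[g']_\KK \in [\GG]_\KK \cong \lquot{\Gamma}{G}$ is obtained from $g' \in \GG(\A)$ by choosing $x \in \GG(\Q)$ with $xg'_\# \in \KK$ and taking the $\Gamma$-class of $xg'_S$.

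The key computation is the following. Consider the adelic torus orbit $\psi_0(\TT_K(\A))g$ inside $[\GG]_\KK$; more precisely, using the representatives $t_1,\dots,t_{h'}$ of $Pic(\O) \cong \lrquot{K^\times}{\widehat K^\times_\#}{\widehat\O^\times_\#}$, I would write the projection of $\TT_{\psi_0}g$ as the union over $i$ of the images of $\psi_0(t_{i,S})^{-1}$-translates. For each $i$, pick $x_i \in \GG(\Q)$ with $x_i\psi_0((t_i)_\#) \in \KK$ — this is possible by Corollary \ref{strongdec} since $\KK$ satisfies the hypothesis there. By the definition of the $Pic(\O)$-action in Section \ref{Picaction}, the class $\a_i \in Pic(\O)$ attached to $t_i$ satisfies $\a_i \star [\psi_0] = [x_i\psi_0 x_i^{-1}]$, and the image of the $i$-th piece of $\TT_{\psi_0}g$ in $\lquot{\Gamma}{G}$ is precisely $\Gamma$-class of $x_i\psi_0(t_{i,S})g$, which equals the set $\Delta_{x_i\psi_0 x_i^{-1}}$ (up to the choice of $g$, using $x_i\psi_0((t_i)_S)g \in x_i\psi_0(K_S)^\times g = x_i\psi_0(K_S)^\times x_i^{-1}\,x_i g$ and that $x_i g$ conjugates $H$ to $(x_i\psi_0 x_i^{-1})(K_S)^\times$). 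Since Proposition \ref{trans} says $Pic(\O)$ acts simply transitively on $[opt(\O,R)]$, as $i$ ranges over $1,\dots,h'$ the classes $\a_i \star [\psi_0]$ run over all of $[opt(\O,R)]$ exactly once, so the projection is $\bigsqcup_{\psi \in [opt(\O,R)]} \Delta_\psi$.

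The main obstacle I anticipate is the bookkeeping around the $S$-components versus the $\#$-components: one must check carefully that translating $\psi_0(\TT_K(\A))g$ by the finite-away-from-$S$ parts $(t_i)_\#$ and then applying the rational element $x_i$ lands the $S$-part in the correct coset $\psi_0(K_S)^\times g$ conjugated by $x_i$, and that this gives $\Delta_{x_i\psi_0 x_i^{-1}}$ rather than merely a subset. This uses that $H$ (hence $\psi_0(K_S)^\times$) has finite index $2^{\#S}$ in its normalizer, so that $\TT_{\psi_0}g$ already accounts for the whole torus orbit at $S$ and the $2^{\#S}$ ambiguity in the choice of $\Delta_\psi$ is harmless. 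The disjointness of the union then follows from simplicity of the action: two pieces coincide only if the corresponding $Pic(\O)$-classes agree, forcing $i = j$.
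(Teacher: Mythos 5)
Your proposal follows essentially the same route as the paper's proof: decompose the adelic torus quotient using the representatives $t_i$ of $Pic(\O)\cong\lrquot{K^\times}{\widehat K^\times_\#}{\widehat\O^\times_\#}$, pick rational $x_i$ pushing the away-from-$S$ part into $\KK$, observe that the $i$-th piece maps to $\Gamma x_i\psi_0(K_S)^\times g=\Gamma\bigl(x_i\psi_0 x_i^{-1}\bigr)(K_S)^\times (x_i g)=\Delta_{x_i\psi_0 x_i^{-1}}$, and invoke Proposition \ref{trans}. Aside from a small bookkeeping slip ($t_i$ versus $t_i^{-1}$, and writing the image of a piece as a single $\Gamma$-class of $x_i\psi_0(t_{i,S})g$ rather than the full set $\Gamma x_i\psi_0(K_S)^\times g$), the argument matches the paper.
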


\begin{proof}

The image of $[\TT_K]$ under $\psi_0$ in $[\GG]_{\KK}$ factors through \[\lrquot{K^\times}{\A_K^\times}{\widehat{\O}^\times_\#\Q_S^*}=\bigsqcup_{i=1}^{h'}K^*(t_i^{-1}\widehat{\O}^\times_\#\times (K_S)^\times).\]

The image of $t_i^{-1}\widehat{\O}^\times_\#\times (K_S)^\times$ under $\psi_0$ in $[\GG]_{\KK}$ is $\psi_0(t_i^{-1})\KK\times (\psi_0(K_S))^\times$. Under the identification in (\ref{ident}) this is \[\Gamma x_i\psi_0(K_S)^\times g=\Gamma x_i\psi_0(K_S)^\times x_i^{-1} x_i g,\] where $x_i\in \GG(\Q)$ is such that $x_i\psi_0(t_i^{-1})\in \KK$. This finishes the proof since Proposition \ref{trans} shows that the collection $x_i\psi_0 x_i^{-1}$ runs over a complete set of representatives for $[opt(\O,R)]$.
\end{proof}

\section{Ihara-Shintani Cycles}\label{IScycles}

In this section we specialize to the case $B=M_2(\Q)$, $S=\{p,\infty\}$, $R=M_2(\Z[1/p])$ and $K\subseteq\C$ is imaginary and splits at $p$. In this case $\KK=\PGL_2(\widehat{\Z}_\#)$ and $\Gamma=\PGL_2(\Z[1/p])$ so we have natural identifications \[[\GG]_{\KK\cdot{\mathrm{PSO}_2(\R)}}\cong\lquot{\Gamma}{(\C-\R)\times \PGL_2(\Q_p)}\cong\lquot{\Gamma^+}{\H\times \PGL_2(\Q_p)},\] where $\Gamma^+=\Gamma\cap \PGL_2^+(\R)$.

Let $\mathfrak{P}$ be a prime above $p$ in $\overline{\Z}$, the integral closure of $\Z$ in $\overline{\Q}$. This defines an embedding $\overline{\Q}\hookrightarrow \C_p$ and in particular $K\hookrightarrow \Q_p$. 

As in section 4.2, let $\O$ denotes a $\Z[1/p]$ order in $K$. For any $\psi\in opt(\O,R)$, the torus $\psi(K_\infty)^\times$ acts on $\C-\R$ by fractional linear transformations, having two fixed points conjugate to each other. We denote by $\tau$ its fixed point in $\H\subseteq\C-\R$.  On the other hand, the group $\psi(K_p)^\times$ has two fixed points in $\P^1(\Q_p)$ (when identified with $\partial\T_p$, these are the two end points of the geodesic preserved by Corollary \ref{dynamics}) which are also $\tau$ and its conjugate, since $\psi(K)^\times$ fixes them.

Let $A$ denote the diagonal group of $\PGL_2$ and consider $H=\mathrm{PSO}_2(\R) \times A(\Q_p)$. Let $g\in G$ be such that $\psi(K_S)^\times=gHg^{-1}$ (it exists by Skolem-Noether). It is always the case that both $g_\infty\cdot i$ and $g_p\cdot0$ assume the value $\tau$ or $\tau'$. If we multiply $g$ by some element in $N_G(H)$, the normalizer of $H$ in $G$, we cover all $4=[N_G(H):H]$ possibilities. Then we normalize the choice of $\Delta_\psi$ in $\Gamma\backslash G$ by requiring that $g_\infty\cdot i=\tau$ and $g_p\cdot0=\tau$. 

For $\tau$ in $\H'$, we take $K_\tau$ to be the $\Q$-algebra of matrices $g\in M_2(\Q)$ such that $g\cdot\tau=\tau$, together with the zero matrix. The map sending $g=\begin{pmatrix}A&B\\C&D\end{pmatrix}$ to its eigenvalue $C\tau+D$ (with eigenvector $(\tau\,\,\, 1)^t$) is an isomorphism onto $\Q(\tau)$. Denote its inverse as $\psi_\tau$ and let $IS(\O)$ be as in the Introduction. If $\Gamma^+\tau$ belongs to $IS(\O)$, the map $\psi_\tau$ is an element in $opt(\O,R)$ whose fixed point in $\H$ is $\tau$. We could have also taken the embedding coming from the eigenvalue $C\tau'+D$ but this one is $\Gamma$-conjugate to $\psi_\tau$ since there exists $g\in \PGL_2(\Z)\subset\Gamma$ taking $\tau$ to $\tau'$. 

For $\tau$ in ${\H'}$, we define the Ihara-Shintani cycle $\Delta_\tau$ attacthed to $\Gamma^+\tau$ to be the projection of $\Delta_{\psi_\tau}$ to $[\GG]_{\KK\cdot PSO_2(\R)}$. It corresponds to the $\Gamma^+$ orbit of $\{\tau\}\times K_{\tau,p}^\times g$ in $\H\times \PGL_2(\Q_p)$. If $\Gamma^+\tau\in IS(\O)$, the map $K_p^\times\to \Delta_\tau$ sending $x$ to the $\Gamma^+$-class of $(\tau,\psi_\tau(x)g)$ induces a uniformization $\lrquot{\O^\times}{K_p^\times}{\Q_p^\times}\cong \Delta_\tau$. The measure $\nu_\tau$ from the introduction is the push-forward  of a finite measure in $\lrquot{\O^\times}{K_p^\times}{\Q_p^\times}$ coming from a Haar measure in $K_p^\times$. 

Let $d$ be as in section 4.2. Suppose that $\Gamma^+\tau\in IS(\O)$. Since $\tau\in\H'$ and $K\cong K_\tau$ we have that $p\nmid d$ and $p$ splits into two primes inside $\O_d$. Both of these primes are proper so that they define elements in $Cl(\O_d)$. Let $k$ be the order of $\p=\mathfrak{P}\cap\O_d$ in this group so we can write $\p^k=u\O_d$ for some $u\in\O_d$. 

\begin{lemma}\label{units} We have $\O^\times=\langle\O_d^\times,u,p\rangle$.
\end{lemma}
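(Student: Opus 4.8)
The plan is to pass to $S$-units and read off the claim from the ideal-theoretic description of $\O = \O_d[1/p]$. First I would observe that $\O^\times = \O_d[1/p]^\times$, so by Dirichlet's $S$-unit theorem (with $S = \{p\}$ together with the archimedean place) the group $\O^\times$ is generated by $\O_d^\times$ together with any elements of $\O_d[1/p]$ that generate the (finitely many) primes above $p$ up to principal $\O_d$-ideals. Concretely, since $p$ splits in $\O_d$ as $p\O_d = \p\bar\p$ with $\p = \mathfrak{P}\cap\O_d$ proper, the localization inverts exactly $\p$ and $\bar\p$; hence the quotient $\O^\times/\O_d^\times$ is the subgroup of the (proper) class-group-free part generated by the classes of $\p$ and $\bar\p$ in the group of fractional $\O_d$-ideals modulo principal ones, which is free of rank $1$ generated by the image of $\p$ (because $p\O_d = \p\bar\p$ is principal, so $\bar\p$ and $\p^{-1}$ differ by a principal ideal).

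Next I would make the generators explicit. Let $k$ be the order of $\p$ in $Cl(\O_d)$ and write $\p^k = u\O_d$. Any $x \in \O^\times$ has $x\O = x\O_d[1/p]$ equal to $(\p\bar\p)^{-m}\O$ for a unique $m\ge 0$ after clearing denominators, and as an $\O_d$-ideal $x\O_d$ (the "numerator") is of the form $\p^a\bar\p^b$ with $a - b \equiv 0 \pmod k$; using $\p\bar\p = p\O_d$ and $\p^k = u\O_d$ one rewrites $\p^a\bar\p^b$ as $p^{\min(a,b)} u^{(a-b)/k}\O_d$ (up to the principal ideal, and flipping $u\mapsto u^{-1}$ when $a<b$). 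Therefore $x$ equals $p^{j} u^{i}\varepsilon$ for some $j\in\Z$, $i\in\Z$ and $\varepsilon\in\O_d^\times$, which gives $\O^\times \subseteq \langle \O_d^\times, u, p\rangle$. The reverse inclusion is immediate: $p$ and $u$ are invertible in $\O = \O_d[1/p]$ (for $u$, because $u\O_d = \p^k$ divides a power of $p\O_d$), and $\O_d^\times \subseteq \O^\times$ trivially.

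The main obstacle is bookkeeping the relation between "ideal of $\O$" and "ideal of $\O_d$" correctly for the \emph{non-maximal} order $\O_d$: one must restrict to \emph{proper} (invertible) fractional ideals so that $Cl(\O_d)$ and unique factorization into $\p,\bar\p$ at the prime $p$ make sense, and then check that inverting $p$ kills exactly the classes of $\p$ and $\bar\p$ and nothing more. Since $p\nmid d$ (because $\tau\in\H'$ forces $p$ to split, as recalled just before the lemma), the prime $p$ is coprime to the conductor, so $\p$ and $\bar\p$ are indeed proper $\O_d$-ideals and the localization-of-Picard-groups exact sequence $\O_d^\times \to \O_d[1/p]^\times \to \bigoplus_{\q\mid p}\Z \to Cl(\O_d)\to Cl(\O_d[1/p])\to 0$ applies verbatim; this is what pins down that $\O^\times/\langle\O_d^\times, p\rangle$ is cyclic generated by the image of $u$, completing the argument.
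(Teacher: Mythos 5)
Your proof is correct and takes essentially the same approach as the paper: both arguments reduce to factoring the principal $\O_d$-ideal generated by a unit $x$ (after clearing a power of $p$) as $\p^a\bar\p^b$, using that $\p,\bar\p$ are proper because $p\nmid d$, and then rewriting via $\p\bar\p = p\O_d$ and $\p^k = u\O_d$. Your invocation of the localization exact sequence for Picard groups is a tidy structural framing of the same computation that the paper carries out directly by reducing exponents modulo $k$ and deriving a contradiction with the minimality of $k$ when $a\neq b$.
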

\begin{proof}

Note that $x\in\O^\times$ if and only if $p^nx\in\O_d$ and $p^m\in x\O_d$ for some $n,m\geq0$. Therefore, after multiplying by a power of $p$, we can assume that $x\in\O_d$ and $x\O_d$ is an ideal dividing $p^m\O_d$, so that it is of the shape $\p^i\p'^j$ with $i,j\geq0$ by uniqueness of the prime decomposition (principal ideals are proper). Here $\p'$ denotes the conjugate of $\p$ under the non-trivial automorphism of $K$. If $i,j\geq k$ we can divide $x$ by a power of $u$ or $u'$ in order to have $0\leq i,j<k$. If $i=j$ we see that $x$ differs from an element in $\O_d^\times$ by a power of $p$. If this is not the case, after taking classes in $Cl(\O_d)$ we obtain a contradiction with the minimality of $k$. All of this proves that $\O^\times=\langle \O_d^\times, u,p\rangle$ since $uu'=p^k$.
\end{proof}

Denote the stabilizer of $\tau$ in $\Gamma^+$ as $\Gamma_\tau^+$. Then $\Gamma_\tau^+=\O_\tau^\times$. Idenfity $K_\p=\Q_p$ and $K_p\cong\Q_p\times\Q_p$ by sending $x\in K$ to $(x,x')$. When $\O_d^\times=\{\pm1\}$, Lemma \ref{units} allows us to see that $\Gamma_\tau^+$ is isomorphic to $(u/u')^\Z$ under $\psi_\tau$. Observe that $\mathrm{ord}_p(u/u')=\mathrm{ord}_\p(u/u')=\mathrm{ord}_\p(u)=k>0$.

\begin{proposition}\label{volcano} For $\tau\in\H'$, the image of $\{\tau\}\times \T_p$ in $\lquot{\Gamma^+}{(\H\times \T_p)}$ has the structure of a $(p+1)$-volcano. It can be identified with the $p$-isogeny graph of the elliptic curve with ordinary reduction associated with $\PSL_2(\Z)\tau$.
\end{proposition}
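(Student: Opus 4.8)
The plan is to unpack both assertions of Proposition \ref{volcano} from the dynamical description of $\overline{K}_{\tau,p}^\times$ acting on $\T_p$ provided by Corollary \ref{dynamics}(1), together with the identification of $\Gamma_\tau^+$ with $\psi_\tau^{-1}$ of the units $\O_\tau^\times=\O^\times$. Since $p$ splits in $K$, the algebra $K_{\tau,p}\cong\Q_p\times\Q_p$ is split over $\Q_p$, so Corollary \ref{dynamics}(1) tells us that $\overline{K}_{\tau,p}^\times$ fixes the two ends $\tau,\tau'\in\partial\T_p$ and acts transitively on the vertices at distance $n$ from the geodesic $\mathcal{G}_{\psi_\tau}=\mathcal{G}_\tau^{\tau'}$, with the stabilizer of a vertex at distance $n$ being the image of the order of conductor $p^n$. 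First I would pass to the quotient by $\Gamma_\tau^+=\overline{\O^\times}$: using the bijection $\lquot{\Gamma^+}{(\Gamma^+\tau\times\T_p)}\cong\{\tau\}\times\lquot{\Gamma_\tau^+}{\T_p}$ noted in the Introduction, it suffices to describe $\lquot{\overline{\O^\times}}{\T_p}$.

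The key computation is to pin down the image of $\overline{\O^\times}$ inside $\overline{K}_{\tau,p}^\times$. By Lemma \ref{units}, $\O^\times=\langle\O_d^\times,u,p\rangle$; modulo $\Q_p^\times$ the element $p$ dies, the finite group $\O_d^\times$ maps into the compact part (fixing the whole geodesic $\mathcal{G}_\tau^{\tau'}$, since $\O_d$ is the maximal order locally at $p$ because $p\nmid d$), and $u$ maps to an element acting on $\mathcal{G}_\tau^{\tau'}$ as translation by $\mathrm{ord}_p(u/u')=k>0$. Hence $\overline{\O^\times}$ acts on $\mathcal{G}_\tau^{\tau'}$ through a subgroup of translations of index... more precisely it acts by translations generated by a shift of amplitude $k$, with the finite torsion $\overline{\O_d^\times}$ acting trivially on the geodesic. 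Therefore the quotient $\lquot{\overline{\O^\times}}{\T_p}$ has as its ``core'' the circle $\lquot{k\Z}{\mathcal{G}_\tau^{\tau'}}$, a loop of length $k$, and every other vertex of $\T_p$ lies at a well-defined distance $n\geq 1$ from this geodesic and is carried to it by the transitive action at each level; the link of each vertex of the rim is still $(p+1)$-regular (one neighbor on the rim in each direction plus $p-1$ descending) and each descending vertex has $1$ ``up'' neighbor and $p$ ``down'' neighbors — this is exactly the combinatorial definition of a $(p+1)$-volcano with rim of length $k$. I would spell this out by checking the local degree conditions vertex-by-vertex using Corollary \ref{dynamics}(1) and the fact that $\overline{\O^\times}$ preserves distance to the geodesic.

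For the second assertion, I would invoke the classical dictionary (Deuring, and its organization via the Bruhat-Tits tree, cf. the references \cite{Ihara}, \cite{BDIS}) between the $p$-isogeny graph of an elliptic curve $E$ with ordinary reduction at $p$ and the quotient of $\T_p$ by the local unit group of the order $\mathrm{End}(\widehat{E_{\overline{\mathbb{F}_p}}})\cap\mathrm{End}(E)$: vertices of $\lquot{\overline{\O^\times}}{\T_p}$ at distance $n$ from the rim correspond to $E$ together with a choice of cyclic $p^n$-level structure in the ordinary locus, i.e.\ to curves $p^n$-isogenous to $E$ whose endomorphism ring has conductor exactly $p^n$ at $p$, and edges are $p$-isogenies. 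Concretely, the curve $E$ attached to $\PSL_2(\Z)\tau$ has CM by $\O_d$ (the ``horizontal'' order), whose splitting of $p$ into $\p,\p'$ produces the two directions along the rim, and $k=\mathrm{ord}(\p)$ in $Cl(\O_d)$ is precisely the length of the rim — matching the volcano we built. I expect the main obstacle to be bookkeeping: making the identification ``distance to the geodesic $=$ $p$-adic conductor of the endomorphism ring'' and ``rim $=$ horizontal isogenies indexed by $\langle\p\rangle\subseteq Cl(\O_d)$'' completely precise and functorial in $\tau$, rather than any single hard estimate; all the genuine dynamical input is already packaged in Corollary \ref{dynamics} and Lemma \ref{units}.
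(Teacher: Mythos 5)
Your approach matches the paper's. For the first assertion you and the paper do the same thing: reduce to describing $\lquot{\Gamma_\tau^+}{\T_p}$ via the bijection with $\lquot{\Gamma^+}{(\Gamma^+\tau\times\T_p)}$, feed Lemma~\ref{units} and the computation $\mathrm{ord}_p(u/u')=k>0$ into Corollary~\ref{dynamics}(1), and read off a $(p+1)$-volcano with rim of length $k$. For the second assertion your sketch correctly identifies the content but leaves the mechanism implicit: you invoke the Deuring/Bruhat--Tits dictionary abstractly, whereas the paper makes it concrete by using the double-coset identifications $\lquot{\Gamma^+}{(\H\times\mathcal{V}(\T_p))}\cong\lrquot{\Gamma^+}{(\H\times\PGL_2(\Q_p))}{\PGL_2(\Z_p)}\cong\lquot{\PSL_2(\Z)}{\H}$ (and the edge version with $\Gamma_0(p\Z_p)$, giving $\lquot{\Gamma_0(p)}{\H}$) to turn a vertex $\Gamma^+(\tau,v)$ into a curve $\PSL_2(\Z)\gamma\tau$, and then checks locally at $p$, via Corollary~\ref{dynamics}, that the conductor at $p$ of its CM order equals the distance from $v$ to the geodesic. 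One small inaccuracy: a vertex at distance $n$ from the rim is not ``$E$ together with a cyclic $p^n$-level structure'' (that data is rather a path of length $n$, which is the edge/$\Gamma_0$ identification); it is a curve $p^n$-isogenous to $E$ whose order has conductor exactly $p^n$ at $p$, which is what you say after your ``i.e.''\ and what the paper establishes. So: same route, with the paper filling in the local CM bookkeeping you flagged as the remaining work.
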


\begin{proof} Assume $\Gamma^+\tau\in IS(\O)$. The image is $\lquot{\Gamma^+}{\Gamma^+(\{\tau\}\times \T_p)}\approx\{\tau\}\times\lquot{\O_\tau^\times}{\T_p}$ and point (1) in Corollary \ref{dynamics} together with the previous paragraph implies that $\lquot{\O_\tau^\times}{\T_p}$ is a $(p+1)$-volcano with a cycle of lenght $k$ as a rim. This rim is the projection of the geodesic preserved by $K_p^\times$ and the levels are given by distance to the geodesic.

For the second claim we work first with the set of vertices. We have an identification between $\lquot{\Gamma^+}{(\H\times \PGL_2(\Q_p))}$ and $\lquot{\mathrm{PSL}_2(\Z)}{(\H\times \PGL_2(\Z_p))}$ by sending $\Gamma^+(\tau,g)$ to $\PSL_2(\Z)(\gamma\tau,\gamma g)$, where $\gamma\in\Gamma^+$ satisfies $\gamma g\in \PGL_2(\Z_p)$. Therefore we can also identify \begin{equation}\label{projection}\lquot{\Gamma^+}{(\H\times\mathcal{V}(\T_p))}\cong\lrquot{\Gamma^+}{(\H\times \PGL_2(\Q_p))}{\PGL_2(\Z_p)}\cong\lquot{\PSL_2(\Z)}{\H}.\end{equation}

Consider $\Gamma^+(\tau,v)\in \lquot{\Gamma^+}{(\H'\times\mathcal{V}(\T_p))}$. There exist $g\in \PGL_2(\Q_p)$ such that $v=gv_0$. Under the previous identifications this orbit corresponds to the elliptic curve associated with $\PSL_2(\Z)\gamma\tau$, where $\gamma g\in \PGL_2(\Z_p)$. Certainly this elliptic curve is $p^m$-isogenous to the elliptic curve attached to $\PSL_2(\Z)\tau$, for some $m\geq0$, and it has CM by $\O_{dp^{2n}}$ for some $n\geq0$. This is the case if $\psi_{\gamma\tau}(K)\cap M_2(\Z)=\psi_{\gamma\tau}(\O_{dp^{2n}})$. But $\psi_{\gamma\tau}=\gamma\psi_\tau\gamma^{-1}$ so this condition boils down to $\psi_\tau(K)\cap\gamma^{-1}M_2(\Z)\gamma=\psi_\tau(\O_{dp^{2n}})$. We can check this condition locally. Since $\gamma\in \PGL_2(\Z_q)$ for $q\neq p$, it is only neccesary to check that $\psi_\tau(K_p)\cap\gamma^{-1}M_2(\Z_p)\gamma=\psi_\tau(\O_{dp^{2n}}\otimes\Z_p)$. Since the projectivized units of $\gamma^{-1}M_2(\Z_p)\gamma$ form the stabilizer of $\gamma^{-1}v_0=g v_0$ and $\O_{dp^{2n}}\otimes\Z_p$ is the order of conductor $p^n$ in $\O_d\otimes\Z_p$, by Corollary \ref{dynamics} this is equivalent to $v=gv_0$ being at distance $n$ from the geodesic fixed by $K_p^\times$. 

For the edges, we remark that \[\lquot{\Gamma^+}{(\H\times\overrightarrow{\mathcal{E}}(\T_p))}\cong\lrquot{\Gamma^+}{(\H\times \PGL_2(\Q_p))}{\Gamma_0(p\Z_p)}\cong\lquot{\Gamma_0(p)}{\H}.\] This shows that two points in $\Gamma^+\backslash(\Gamma^+\tau\times\T_p)$ are connected by an edge if and only if the respective elliptic curves are $p$-isogenous. This ends the proof of the second claim.
\end{proof}

By definition of $\Delta_\tau$, the vertex $gv_0$ belongs to the geodesic preserved by $K_p^\times$ so we have that the second coordinate of $\Delta_\tau$ projects to the rim of $\lquot{\O_\tau^\times}{\T_p}$. This closed cycle in the quotient of the tree is what the authors in \cite{BDIS} refer to as a $p$-adic Shintani cycle. This justifies the name of $\Delta_\tau$ and since the rim is a closed cycle we can think of it as a discrete closed geodesic mimicking Duke's geodesics in the archimedean case. Note the following consequence of Proposition \ref{volcano} and its proof.

\begin{corollary}The image of $\Delta_\tau$ in $\lquot{\PSL_2(\Z)}{\H}$ under \eqref{projection} corresponds to the $k$ elliptic curves with CM by $\O_d$ and which are $p^\infty$-isogenous to the elliptic curve associated with $\PSL_2(\Z)\tau$. 
\end{corollary}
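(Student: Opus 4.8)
The plan is to read the statement off from Proposition~\ref{volcano} and from the dictionary between vertices of the volcano and elliptic curves worked out in its proof; assume throughout that $\Gamma^+\tau\in IS(\O)$ and write $E_\tau$ for the elliptic curve associated to $\PSL_2(\Z)\tau$. Recall that $\Delta_\tau$ is the $\Gamma^+$-orbit of $\{\tau\}\times K_{\tau,p}^\times g$, where $g$ was normalized so that $gv_0$ lies on the geodesic $\mathcal{G}_{\psi_\tau}$ of $\T_p$ fixed by $K_{\tau,p}^\times$. First I would push $\Delta_\tau$ forward along the map $\lquot{\Gamma^+}{(\H\times\PGL_2(\Q_p))}\to\lquot{\Gamma^+}{(\H\times\mathcal{V}(\T_p))}$ given by quotienting the $p$-adic coordinate by $\PGL_2(\Z_p)$. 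The $p$-adic coordinate $K_{\tau,p}^\times g$ becomes the orbit $K_{\tau,p}^\times\cdot gv_0$, which by Corollary~\ref{dynamics}(1) with $n=0$ is the entire vertex set of $\mathcal{G}_{\psi_\tau}$; passing further to the quotient by $\Gamma^+_\tau=\O_\tau^\times$ (using $\lquot{\Gamma^+}{(\Gamma^+\tau\times\mathcal{V}(\T_p))}\approx\{\tau\}\times\lquot{\O_\tau^\times}{\mathcal{V}(\T_p)}$) this becomes, by the proof of Proposition~\ref{volcano}, exactly the rim of the volcano $\{\tau\}\times\lquot{\O_\tau^\times}{\T_p}$. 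Since that proposition identifies the rim with a cycle of length $k$, the image of $\Delta_\tau$ in $\lquot{\Gamma^+}{(\H\times\mathcal{V}(\T_p))}$ is a set of $k$ pairwise distinct points.

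Next I would compose with the bijection \eqref{projection}. By the computation in the proof of Proposition~\ref{volcano}, a class $\Gamma^+(\tau,v)$ with $v=g'v_0$ is sent under \eqref{projection} to $\PSL_2(\Z)\gamma\tau$ (for $\gamma\in\Gamma^+$ with $\gamma g'\in\PGL_2(\Z_p)$), and the elliptic curve attached to $\PSL_2(\Z)\gamma\tau$ is $p^m$-isogenous to $E_\tau$ for some $m\geq 0$ and has CM by $\O_{dp^{2n}}$, where $n=d(v,\mathcal{G}_{\psi_\tau})$. For $v$ on the rim one has $n=0$, so the associated curve has CM by $\O_d$; and because Proposition~\ref{volcano} identifies $\lquot{\O_\tau^\times}{\T_p}$ with the $p$-isogeny graph of $E_\tau$, any vertex of this graph is $p^\infty$-isogenous to $E_\tau$. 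Combined with the previous paragraph and the bijectivity of \eqref{projection}, this shows that the image of $\Delta_\tau$ in $\lquot{\PSL_2(\Z)}{\H}$ is a set of exactly $k$ pairwise distinct elliptic curves, each having CM by $\O_d$ and being $p^\infty$-isogenous to $E_\tau$.

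It remains to see that \emph{every} elliptic curve $E'$ with CM by $\O_d$ that is $p^\infty$-isogenous to $E_\tau$ occurs in this image --- this also shows that there are exactly $k$ such curves. Being joined to $E_\tau$ by a chain of $p$-isogenies, $E'$ is a vertex $v$ of the connected $p$-isogeny graph of $E_\tau$, i.e.\ of $\lquot{\O_\tau^\times}{\T_p}$; by the dictionary above its CM order is $\O_{dp^{2n}}$ with $n=d(v,\mathcal{G}_{\psi_\tau})$, so having CM by $\O_d$ forces $n=0$, that is, $v$ lies on the rim, whence $E'$ belongs to the image of $\Delta_\tau$. I do not expect any serious obstacle here: the one point meriting care is precisely this last equivalence --- for a vertex of the volcano, having CM order $\O_d$ is the same as lying on the rim --- and this is exactly the conductor-versus-distance assertion already established inside the proof of Proposition~\ref{volcano}, together with the fact that the volcano realizes the \emph{full} connected $p$-isogeny graph of $E_\tau$, so no new ingredient is needed and the corollary follows.
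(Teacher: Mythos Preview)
Your proposal is correct and follows exactly the route the paper intends: the paper does not give a separate proof, stating the corollary as ``the following consequence of Proposition~\ref{volcano} and its proof'' after noting that $gv_0$ lies on the geodesic and hence $\Delta_\tau$ projects to the rim. Your write-up is a careful unpacking of that consequence --- identifying the image of $\Delta_\tau$ with the rim via Corollary~\ref{dynamics}(1), invoking the conductor-versus-distance dictionary from the proof of Proposition~\ref{volcano}, and checking both inclusions --- with no new ingredients needed.
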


The set of elliptic curves described in the above Corollary form a closed orbit under the usual action of $\p^\Z$ on the set of elliptic curves with CM by $\O_d$. In this regard, just as Duke's geodesics form a closed orbit under the geodesic flow, the cycles $\Delta_\tau$ are a closed orbit under a ``flow "defined by $p$.

\section{Heegner points on Shimura curves}\label{Heeg}

In this section we focus on the cases where $B$ is a definite quaternion algebra ramified at $N^-$ and $K$ is imaginary and inert at $p$. Remember that $R$ is a $\Z[1/p]$-Eichler order of level $N^+$ and $\Gamma=\overline{R}^\times$. Denote by $\Gamma^+$ the image in $\Gamma$ of $R_1$, the elements in $R$ with reduced norm equal to $1$.

Let $\mathcal{B}$ be the indefinite quaternion algebra over $\Q$ ramified at the primes dividing $N^-p$. Let $\mathcal{R}$ be an Eichler order of level $N^+$ in $\mathcal{B}$. After fixing an isomorphism $\iota_{\infty}\colon\mathcal{B}_{\infty}\cong M_2(\R)$, the group $\Gamma_\infty=\iota_{\infty}(\mathcal{R}^\times)$ acts on $\mathcal{H}_\infty\colonequals\C\smallsetminus\R$. The space $\lquot{\Gamma_\infty}{\mathcal{H}}_\infty$ is a compact Riemann surface that corresponds to the complex points of an algebraic variety $X$ defined over $\Q$ i.e. $X(\C)=\lquot{\Gamma_\infty}{\mathcal{H}}_\infty$. We refer to $X$ as the Shimura curve attached to the data $(\mathcal{B},\mathcal{R})$.

Let $\Q_{p^2}$ be the unique unramified quadratic extension of $\Q_p$. The compact space $X(\Q_{p^2})$ admits the following $p$-adic uniformization. Let $\H_p\colonequals\Q_{p^2}-\Q_p$, the Drinfeld upper-half plane. After fixing an isomorphism $B_p\cong M_2(\Q_p)$, $\Gamma$ acts on $\H_p$ by fractional linear transformations.

\begin{theorem}[Cerednik-Drinfeld]\label{padicunif} The quotient space $\lquot{\Gamma^+}{\H_p}$ corresponds to the space of $\Q_{p^2}$-points of the Shimura curve $X$.
\end{theorem}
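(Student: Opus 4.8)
The plan is to realize both sides of the asserted identification as double coset spaces and to match them using the adelic description developed in Sections \ref{adelic}–\ref{cycles}. On the analytic side, one starts from the complex uniformization $X(\C)=\lquot{\Gamma_\infty}{\mathcal H_\infty}$ coming from the indefinite data $(\mathcal B,\mathcal R)$; reinterpreting this adelically (exactly as in \eqref{ident0}, now with $S=\{\infty\}$ and $\mathcal B$ split at $\infty$ but ramified at $p$), one gets $X(\C)\cong\lrquot{\mathcal B^\times(\Q)}{(\mathcal H_\infty\times \mathcal B^\times(\A_f))}{\widehat{\mathcal R}^\times}$. The curve $X$ is defined over $\Q$, so its $\Q_{p^2}$-points are computed by analyzing the special fiber of a suitable integral model at $p$: this is precisely the content of the Cerednik–Drinfeld theorem, whose proof I would not reprove but cite (the original sources being Čerednik and Drinfeld, with the treatment in Boutot–Carayol as the standard reference). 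What remains on our side is to identify the resulting $p$-adic analytic space with $\lquot{\Gamma^+}{\H_p}$ in the notation of the excerpt.

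The key step is the interchange of the roles of $p$ and $\infty$. Let $B$ be the \emph{definite} quaternion algebra over $\Q$ ramified exactly at the primes dividing $N^-p$ (this is the $B$ fixed at the start of Section \ref{Heeg}), so that $B$ and $\mathcal B$ differ precisely by swapping ramification at $p$ and at $\infty$; their finite adelic points away from $p$ agree, $B^\times(\A_\#)\cong \mathcal B^\times(\A_\#)$, and one transports the Eichler order $\mathcal R$ of level $N^+$ to the $\Z[1/p]$-Eichler order $R$ of the same level. By Čerednik–Drinfeld, $X(\Q_{p^2})$ is the rigid-analytic quotient $\lquot{\Gamma'}{\H_p}$ where $\Gamma'$ is the image in $\PGL_2(\Q_p)$ of $B^\times(\Q[1/p])$-units of norm one inside a fixed level-$N^+$ arithmetic subgroup; strong approximation for $\widetilde{\GG}=B^1$ (Theorem \ref{strong}) away from $p$, together with $\mathrm{nr}(\widehat R_\#^\times)=\widehat\Z_\#^\times$ (established in Section \ref{Picaction}), collapses the double coset $\lrquot{B^\times(\Q)}{B^\times(\A)}{\widehat R_\#^\times B_\infty^\times}$ to the single arithmetic quotient $\lquot{\Gamma^+}{\H_p}$ with $\Gamma^+$ the image of $R_1$, exactly as in the identification \eqref{ident} but now with the archimedean factor replaced by the Drinfeld plane $\H_p$ on which $\Gamma$ acts via the fixed isomorphism $B_p\cong M_2(\Q_p)$. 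Matching $\lquot{\Gamma^+}{\H_p}$ with the adelic description of $X(\Q_{p^2})$ is then a formal comparison of double cosets, using that the two algebras $B$ and $\mathcal B$ have literally the same $\A_f^{(p)}$-points and the same Eichler level structure.

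The main obstacle is genuinely the Čerednik–Drinfeld theorem itself: identifying $X(\Q_{p^2})$ — the $\Q_{p^2}$-points of an algebraic curve originally defined by an \emph{indefinite} archimedean uniformization — with a $p$-adic quotient of the Drinfeld half-plane for a \emph{different} (definite) quaternion algebra is a deep theorem of $p$-adic uniformization of Shimura curves, and a self-contained proof would require the theory of formal $\O_{\Q_p}$-modules, Drinfeld's representability theorem, and the passage from the formal model to its rigid generic fiber. Accordingly my proof is a citation of that theorem followed by the bookkeeping above; the only original content is the strong-approximation collapse identifying the Čerednik–Drinfeld quotient with the concrete group $\Gamma^+$ from Section \ref{Heeg}, which is routine given Theorem \ref{strong} and Corollary \ref{strongdec} and parallels verbatim the derivation of \eqref{ident}.
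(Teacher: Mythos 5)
The paper's own proof is a bare citation of Theorem 4.7 in \cite{BD} together with the original sources, and your proposal is fundamentally the same route: cite Cerednik--Drinfeld. Two small slips in the surrounding bookkeeping are worth flagging. First, the strong-approximation collapse of $\lrquot{B^\times(\Q)}{B^\times(\A)}{\widehat R_\#^\times B_\infty^\times}$, exactly as in \eqref{ident}, produces a quotient by $\Gamma=\overline{R}^\times$, not by $\Gamma^+$; the restriction to norm-one units --- that is, the passage from $\Gamma$ to $\Gamma^+$, which is precisely what distinguishes $X(\Q_{p^2})$ from the set of pairs $\lquot{\Gamma}{\H_p}$ as the paper observes immediately after the theorem --- is part of the $\Q_{p^2}$-rationality built into the Cerednik--Drinfeld statement, not a consequence of Theorem \ref{strong}. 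Second, the definite algebra $B$ is ramified at $\infty$ and at the primes dividing $N^-$, while $\mathcal{B}$ is ramified at the primes dividing $N^-p$; you wrote that $B$ is ramified at $N^-p$, although your subsequent description of swapping ramification at $p$ and $\infty$ is correct. Neither point affects the validity of the citation, but your claim that identifying the Cerednik--Drinfeld group with $\Gamma^+$ is ``routine given Theorem \ref{strong}'' should be softened: $\Gamma^+$ itself is not seen by the adelic identification \eqref{ident}.
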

\begin{proof} See Theorem 4.7 in \cite{BD} or also \cite{Cerednik} or \cite{Drinfeld}.
\end{proof}

Fix some $u\in \Z_p^\times\smallsetminus (\Z_p^\times)^2$ and $\tau_0$ a square root of $u$ so that $\Q_{p^2}=\Q_p(\tau_0)$. The stabilizer $\mathbb{O}_p$ of $\tau_0$ in $PB^\times_p$ is compact (isomorphic to $\Q_{p^2}^\times/\Q_p^\times$). Since $\PGL_2(\Q_p)$ acts transitively on $\H_p$, we have the identification of locally compacts spaces \[[\GG]_{\KK\cdot \mathbb{O}_p}\cong\lrquot{\Gamma}{PB_p^\times}{\mathbb{O}_p}\cong\lquot{\Gamma}{\H_p},\] as in (\ref{ident}).

Let $\omega$ be any element in $\Gamma-\Gamma^+$. Since $[\Gamma:\Gamma^+]=2$, it follows that $\omega$ defines an involution on $X(\Q_{p^2})$ and therefore $\lquot{\Gamma}{\H_p}$ corresponds to pairs of points in $X(\Q_{p^2})$ connected by this involution.

According to the moduli interpretation of $X$ (see section 4 in \cite{BD}), the points in $X(\Q_{p^2})$ corresponds to abelian surfaces over $\Q_{p^2}$ with quaternionic multiplication by $\mathcal{R}$ and a $N^+$-level structure. Given $A$, one such abelian surface, we denote by $\mathrm{End}(A)$ the algebra of endormophism of $A$ (over $\overline{\Q}$) which commute with the quaternionic multiplication and respect the $N^+$-level structure. A Heegner point in $X(\Q_{p^2})$ is a point whose associated abelian surface $A$ has $\mathrm{End}(A)$ isomorphic to an order in a quadratic imaginary field.

As in section 4.2, let $\O$ denotes a $\Z[1/p]$ order in $K$. Let $\psi\in opt(\O,R)$. Then $\psi(K_p)^\times$ acts in $\H_p$ with two fixed points $\tau$ and $\tau'$. We assume that $\tau$ is the fixed point satisfying that for every $x\in K_p^\times$, $\psi(x)$ acts on the column vector $(\tau\,\,\,\, 1)^t$ as multiplication by $x$. This time we take $H=\mathbb{O}_p$ and $\Delta_\psi$ equals the $\Gamma$-orbit of $\psi(K_p)^\times g$ with $g\in PB_p^\times$ such that $\psi(K_p)^\times=g \mathbb{O}_pg^{-1}$. We normalize the choice of $g$ by asking that $g\tau_0=\tau$. The projection of $\Delta_\psi$ in $\lquot{\Gamma}{\H_p}$ is $\Gamma\tau$. It corresponds to the pair of points in $X(\Q_{p^2})$ associated with $\Gamma^+\tau$ and $\Gamma^+\omega\tau$. Let $d$ be as in section 4.2.

\begin{theorem} Under the identification given by Theorem \ref{padicunif}, the class of $\Gamma^+\tau$ corresponds to a Heegner point in $X(\Q_{p^2})$ whose associated order is isomorphic to $\O_d$. As $\psi$ varies over $[opt(\O,R)]$, they are all different. \end{theorem}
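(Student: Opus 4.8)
The plan is to translate everything into the adelic picture developed in Sections \ref{adelic}--\ref{Picaction} and then invoke the moduli interpretation of $X$ recalled from \cite{BD}. First I would unwind the identification $[\GG]_{\KK\cdot\mathbb{O}_p}\cong\lquot{\Gamma}{\H_p}$: the point $\Gamma^+\tau$ arises, via Cerednik--Drinfeld and the normalization $g\tau_0=\tau$, from the double coset $[\psi(t_{\#})g]_{\KK\cdot\mathbb{O}_p}$ for appropriate $t\in\TT_K(\A)$. Passing through the moduli description, a point of $X(\Q_{p^2})$ is an abelian surface $A$ with QM by $\mathcal{R}$ and a $N^+$-level structure, and the subring of $\End(A)$ commuting with the QM and respecting the level structure corresponds, under the Cerednik--Drinfeld uniformization, to the ring of $g'\in\mathcal{R}$ (localized away from $p$, i.e. in $\widehat{R}_{\#}$) fixing the relevant lattice/vertex data — concretely, to $\psi(K_p)^\times\cap \overline{R}_q^\times$ at every $q\neq p$ together with the fixed-point behavior at $p$. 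So the first real step is: identify $\End$ of the abelian surface attached to $\Gamma^+\tau$ with $\psi(K)\cap R$ as a $\Z[1/p]$-order, and then remove the localization at $p$ to land in a genuine $\Z$-order in $K$.

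The heart of the argument is the computation of which $\Z$-order this is. By the optimality of $\psi$ we have $\psi(K)\cap R=\psi(\O)$ where $\O=\O_d[1/p]$, and since $p$ is inert in $K$ (a hypothesis in this section) and invertible in $\O$, the order $\O$ is automatically maximal at $p$: $\O_p=\O_{K_p}$ is the ring of integers of the unramified quadratic extension $K_p=\Q_{p^2}$. Therefore the ``honest'' endomorphism ring — the order obtained by intersecting $\O$ with $K$ after reinserting the constraint at $p$ coming from the vertex $v_0$ fixed by $\mathbb{O}_p$ — is exactly $\O\cap\O_{K_p}=\O_d$. Here I would invoke Corollary \ref{dynamics}(2): because $K_p/\Q_p$ is an unramified field extension, $\overline{K}_p^\times$ has a unique fixed vertex, and the normalization $g\tau_0=\tau$ forces $gv_0$ (equivalently $v_0$ after conjugating by $g$) to be precisely that fixed vertex, whose stabilizer in $\overline{K}_p^\times$ is the image of $\O_{K_p}^\times$; this pins down the conductor at $p$ to be trivial. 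The $N^+$-level structure is accounted for by the orientation data on $\psi$, which is why $\psi$ being an oriented optimal embedding (not merely optimal) is what guarantees the endomorphism ring is $\O_d$ on the nose rather than a suborder. Assembling these local computations via the local-global principle (\cite{Voight}, Theorem 9.1.1) gives $\End(A)\cong\O_d$, so $\Gamma^+\tau$ is a Heegner point with the asserted associated order.

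For the last sentence — that distinct classes in $[\mathrm{opt}(\O,R)]$ give distinct Heegner points — I would argue as follows. Two embeddings $\psi,\psi'$ produce the same point of $X(\Q_{p^2})$ iff the corresponding $\Gamma^+$-orbits in $\H_p$ coincide, i.e. iff there is $\gamma\in\Gamma^+$ (or $\Gamma$, up to the Atkin--Lehner involution $\omega$, but the normalization $g\tau_0=\tau$ has already fixed the point itself, not the pair) with $\gamma\tau=\tau'$, and tracking the normalizations $g\tau_0=\tau$, $g'\tau_0=\tau'$ this forces $\psi'(K_p)^\times=\gamma\psi(K_p)^\times\gamma^{-1}$, hence $\gamma^{-1}$ conjugates $\psi'$ to an embedding with the same image as $\psi$; since the two are then related by an element of the normalizer of $\psi(K)^\times$ and both are oriented, the argument in Proposition \ref{trans} (the ``simple'' part, using Corollaire 2.3 of \cite{Vig}) shows $\gamma\in\overline{R}^\times$, i.e. $[\psi]=[\psi']$ in $[\mathrm{opt}(\O,R)]$. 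Conversely Proposition \ref{trans} says $Pic(\O)$ acts simply transitively on $[\mathrm{opt}(\O,R)]$, so the classes are genuinely distinct and the map $[\psi]\mapsto(\text{Heegner point})$ is injective. The main obstacle I anticipate is the first paragraph: making the dictionary between the abstract adelic double coset $[\GG]_{\KK\cdot\mathbb{O}_p}$ and the moduli-theoretic $\End(A)$ fully precise — in particular checking that the subring of $\End(A)$ respecting the $N^+$-level structure matches $\psi(K)\cap R$ and not merely $\psi(K)\cap S$ for the overorder $S$ — which is exactly where the orientation conditions at $\ell\mid N^+$ earn their keep; once that is set up, the conductor computation at $p$ is a direct application of Corollary \ref{dynamics}(2).
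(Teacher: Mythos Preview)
The paper's own proof is two lines: it notes that since $p$ is inert in $K$ one has $Pic(\O)\cong Pic(\O_d)$, and then cites Theorem~5.3 in \cite{BD}. Everything you propose to do---the moduli-theoretic identification of $\End(A)$ with the order cut out by the local stabilizer data, the conductor computation at $p$, and the injectivity of $[\psi]\mapsto(\text{Heegner point})$---is precisely the content of that cited theorem, so the paper simply outsources it.

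Your sketch is a plausible outline of how one would reprove the Bertolini--Darmon result in this language, and you are right that Corollary~\ref{dynamics}(2) is what forces the conductor at $p$ to be trivial. You are also right to flag the moduli dictionary as the main obstacle: making precise the statement ``$\End(A)$ equals $\psi(K)\cap R$ as a $\Z[1/p]$-order, with the $N^+$-level structure accounted for by the orientation data'' is exactly the substance of \cite{BD}, and is not something one can dispatch in a paragraph. Your distinctness argument is essentially correct once one observes that $\gamma\psi(K_p)\gamma^{-1}=\psi'(K_p)$ forces $\gamma\psi(K)\gamma^{-1}=\psi'(K)$ (two $2$-dimensional $\Q$-subspaces of $B$ agreeing after $\otimes\,\Q_p$ must agree), and that the eigenvalue normalization at $\tau$ rules out the possibility $\gamma\psi\gamma^{-1}=\psi'\circ\sigma$; but your invocation of ``the simple part of Proposition~\ref{trans}'' is not quite the right tool---once $\gamma\psi\gamma^{-1}=\psi'$ with $\gamma\in\Gamma^+\subseteq\overline{R}^\times$, you are already done by the orientation-preservation lemma. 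The paper's route, via $Pic(\O)\cong Pic(\O_d)$ and the bijection in \cite{BD} between Heegner points of conductor $d$ and $Pic(\O_d)$, avoids all of this bookkeeping.
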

\begin{proof} Since $p$ is inert in $K$, $Pic(\O)\cong Pic(\O_d)$. Now see Theorem 5.3 in \cite{BD}.
\end{proof}

The previous result justifies that we name by $\mathrm{Heeg}(\O)$, the collection of $\Gamma\tau$ in $\lquot{\Gamma}{\H_p}$ as $\psi$ runs over $[opt(\O,R)]$.\\

\section{Adelic methods}\label{adelicmethod} 

In this last section we prove Theorem \ref{thm1} and Theorem \ref{thm2}. They will be a direct consequence of Proposition \ref{prueba}.

The group $\GG(\A)$ is unimodular (any left Haar measure is also a right Haar measure) so there is a unique $\GG(\A)$-invariant probability measure $dm$ in $[\GG]$ such that for any $f\in C_c(\GG(\A))$, \begin{equation}\label{unfold}\int_{\GG(\A)}fdg=\int_{[\GG]}\int_{\GG(\Q)}f(hg)dhdm(g),\end{equation} where $dh$ is the counting measure in $\GG(\Q)$ and $dg$ a Haar measure on $\GG(\A)$.

\begin{definition} A homogeneous toral subset in $[\GG]$ is a subset of the form $Y=\TT(\Q)\backslash \TT(\A)g$, with $g\in \GG(\A)$ and $\TT\subseteq\GG$ a maximal torus anisotropic over $\Q$.
\end{definition}

The pushforward of the Haar probability measure on $\TT(\Q)\backslash\TT(\A)$ defines a probability measure $\mu_Y$ on the homogeneous toral set $Y=\TT(\Q)\backslash \TT(\A)g$.

Let $K$ be a quadratic field and let $\psi\colon K\to B(\Q)$ be an algebra embedding. Let $\TT_K$ be the algebraic torus $\mathrm{res}_{K/\Q}\mathbb{G}_m/\mathbb{G}_m$. Then $\psi$ induces a morphism (which we denote by the same letter) $\psi\colon\TT_K\to\GG$. Denote by $\TT_\psi$ the image of $\TT_K$ under $\psi$ in $\GG$. Then every maximal anisotropic torus $\TT\subseteq\GG$ defined over $\Q$ is of the form $\TT_\psi$ for some embedding $\psi$. Now, we attach to $\TT_\psi$ an order in $K$. Fix a $\Z$-order $R$ in $B(\Q)$. Then define the local orders $\Lambda_\ell=\psi(K_\ell)\cap g_\ell R_\ell g_\ell^{-1}$. The order $\Lambda_\ell$ is maximal for almost every $\ell$. Indeed, let $T$ be the finite set of places such that $g_\ell\not\in \overline{R}_\ell^\times$, where $\overline{R}_\ell^\times$ denotes the image of $R_\ell^\times$ in $PB_\ell^\times=B_\ell^\times/\Q_\ell^\times$. Then if $\ell\not\in T$, the intersection $\psi(K_\ell)\cap g_\ell R_\ell g_\ell^{-1}$ is just $\psi(K_\ell)\cap R_\ell$. But this is the localization of the global order $\psi(K)\cap R$ and so it is maximal outside a finite set of places. This shows that $\Lambda=K\cap\prod\Lambda_\ell$ is a global order in $K$.

\begin{definition}[\cite{Khay} section 2.4.4 or equivalently \cite{ELMV3}] The discriminant of a homogeneous toral set of the form $Y=\TT_\psi g$, with $\psi\colon K\to B(\Q)$ an embeeding, is the absolute value of the discriminant of the order $\Lambda$ attached to it as in the previous paragraph.
\end{definition}

\begin{theorem}\label{metathm} Let $\{Y_i\}$ be a sequence of homogeneous toral sets whose discriminants approach $\infty$ as $i\to\infty$. Then, any weak* accumulation point of the sequence of measures $\mu_{Y_i}$ is a homogeneous probability measure on $[\GG]$, invariant under $\GG(\A)^+$, the image of $\sG(\A)$ in $\GG(\A)$.
\end{theorem}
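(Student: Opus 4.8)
The plan is to deduce this from the general equidistribution machinery of \cite{ELMV3} (their Theorem 4.6), which is the tool advertised in the introduction. The statement to be proven is essentially a packaging of that theorem in the adelic language set up in Section \ref{adelic}, so the work is to check that the hypotheses of the cited theorem hold for the sequence $\{Y_i\}$ and to identify the conclusion.

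\begin{proof}[Proof sketch]
The key point is that each $Y_i = \TT_{\psi_i} g_i$ is a \emph{periodic} toral orbit in $[\GG]$: since $\TT_{\psi_i}$ is anisotropic over $\Q$ and $\GG$ has no $\Q$-characters, $\TT_{\psi_i}(\Q)\backslash\TT_{\psi_i}(\A)$ has finite volume, and $\mu_{Y_i}$ is the pushforward of its Haar probability measure under right translation by $g_i$. First I would recall that the measures $\mu_{Y_i}$ all live on the fixed finite-volume space $[\GG] = \GG(\Q)\backslash\GG(\A)$, which is \emph{not} compact; hence a priori mass could escape to infinity, and a weak* accumulation point need only be a subprobability measure. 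The first step is therefore to invoke the relevant non-escape-of-mass input: because $B$ is a \emph{quaternion} algebra, $\GG = PB^\times$ is a form of $\PGL_2$ of $\Q$-rank $0$ or $1$; when $B$ is a division algebra, $[\GG]$ is itself compact and there is nothing to check, and when $B = M_2(\Q)$ one cites the classical fact (used in \cite{ELMV3}) that toral orbits of growing discriminant do not lose mass. I would state this as a lemma, attributing it to the relevant reference, so that any accumulation point $\mu_\infty$ is a genuine probability measure.

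The heart of the argument is then to apply Theorem 4.6 of \cite{ELMV3}. I would verify its hypotheses: (i) the $Y_i$ are homogeneous toral sets associated to a fixed group $\GG$ over $\Q$; (ii) their discriminants $\mathrm{disc}(Y_i)$ tend to $\infty$; and (iii) the local conditions at the ramified places of $B$ are met — in particular $\GG$ is $\Q$-anisotropic away from $S$ or is $\PGL_2$, which is exactly the setup under which their theorem produces, for any weak* limit $\mu_\infty$, that $\mu_\infty$ is a homogeneous measure invariant under the image of $\sG(\A)$ in $\GG(\A)$. The output ``homogeneous and $\GG(\A)^+$-invariant'' is precisely the conclusion of the present statement, so once the hypotheses are matched there is nothing further to do. Along the way I would use strong approximation for $\sG$ (Theorem \ref{strong}) to identify $\GG(\A)^+$ as the closure of $\GG(\Q)\GG(\Q_p)$ inside $\GG(\A)$, which makes the invariance statement concrete and will be needed when the theorem is applied in the next section to extract the specific limit measures $\mu$ and $\mu_X$.

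I expect the main obstacle to be the non-escape-of-mass verification in the split case $B = M_2(\Q)$: unlike the division-algebra case, one genuinely needs an effective lower bound on the injectivity radius along the orbit, or equivalently a quantitative statement that the toral orbit cannot concentrate near the cusp, which in \cite{ELMV3} rests on an arithmetic input about the torus (a bound coming from the class number / regulator, or from Duke-type subconvexity). Everything else — anisotropy of $\TT_{\psi_i}$, finiteness of the orbit volume, the discriminant bookkeeping matching the order $\Lambda$ defined just before the statement — is routine once the adelic dictionary of Sections \ref{adelic}--\ref{cycles} is in place. A secondary but minor point is to make sure the normalization of $\mu_{Y_i}$ as a \emph{probability} measure is compatible with the way \cite{ELMV3} normalizes its toral measures, so that ``accumulation point'' means the same thing in both places.
\end{proof}
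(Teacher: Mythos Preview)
Your proposal is correct and takes the same approach as the paper: the paper's entire proof is the single line ``Theorem 4.6 in \cite{ELMV3}'', treating the result as a black-box citation without spelling out the hypothesis verification or the non-escape-of-mass discussion you sketch. Your elaboration is more detailed than what the paper provides, but it is not a different route.
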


\begin{proof} Theorem 4.6 in \cite{ELMV3}
\end{proof}

The defect of the limit measure in Theorem \ref{metathm} being $\GG(\A)$-invariant is solved in the following fashion. Assume that $\{\mu_{Y_i}\}$ weak* converges to $\mu$. We want to prove that for all $f$ in $C_u([\GG])$, the space of bounded and uniformly continuous functions, we have \begin{equation}\label{reduction}\int_{[\GG]}f(g)d\mu(g)=\int_{[\GG]}f(g)dm(g).\end{equation}

By the discussion made \cite{Aka}, sections 9 and 10, we are reduced to prove (\ref{reduction}) in the case of functions invariant under $\GG(\A)^+$. Let $\GG_{char}$ denote the quotient $\rquot{[\GG]}{\GG(\A)^+}$. The reduced norm induces a homeomorphism \[\GG_{char}\cong \lrquot{\Q^\times}{\A^\times}{(\A^\times)^2},\] which gives to $\GG_{char}$ the structure of a compact abelian group. Since the group of characters is dense in the space of continuous functions, by Weyl's criterion we are reduced to the case when $f$ is a character. 

If $\chi$ is a character of  $\lrquot{\Q^\times}{\A^\times}{(\A^\times)^2}$, by continuity there exists some compact open group $M\subseteq \A_f^\times$ such that $\chi\mid_{M}=1$ and therefore we have a character of the group $\lrquot{\Q^\times}{\A^\times}{\R_{>0}M}\cong(\Z/D\Z)^\times$ for some $D\in\Z$ and trivial on squares. Therefore, if we assume $M$ maximal, we are in the presence of a primitive real Dirichlet character that must be attached to some quadratic field $K_\chi$.

We have reduced the test function to be of the shape $\chi\circ\mathrm{nrd}$ where $\chi$ is a quadratic Hecke character.

\begin{proposition}\label{test} Let $\chi$ be a non-trivial character of $\GG_{char}$ and $Y=\TT_\psi g$ an homogeneous toral set with $\psi\colon K\to B$. Then, \[\int_{Y}\chi(\mathrm{nrd}(t))d\mu_{Y}(t)=\chi(g)\int_{\TT_\psi}\chi(N_{K/\Q}(t))d\mu_{\TT_\psi}(t)=\begin{cases}\chi(g)&\mbox{, if }K=K_\chi\\0&\mbox{, if }K\neq K_\chi\end{cases}\] 
\end{proposition}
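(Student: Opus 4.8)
The first equality is pure unfolding. By definition $Y=\TT_\psi(\Q)\backslash\TT_\psi(\A)\,g$ and $\mu_Y$ is the pushforward of the Haar probability measure $dt$ on $\TT_\psi(\Q)\backslash\TT_\psi(\A)$, so
\[
\int_{Y}\chi(\mathrm{nrd}(t))\,d\mu_{Y}(t)=\int_{\TT_\psi(\Q)\backslash\TT_\psi(\A)}\chi\big(\mathrm{nrd}(t)\,\mathrm{nrd}(g)\big)\,dt=\chi(g)\int_{\TT_\psi(\Q)\backslash\TT_\psi(\A)}\chi(\mathrm{nrd}(t))\,dt,
\]
using that $\mathrm{nrd}$ is multiplicative, that $\chi$ is a character, and the convention $\chi(g)=\chi(\mathrm{nrd}(g))$ coming from the identification $\GG_{char}\cong\Q^\times\backslash\A^\times/(\A^\times)^2$. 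Since $\psi$ identifies $\TT_K$ with $\TT_\psi$ as $\Q$-tori (compatibly with Haar probability measures), and the reduced norm of $B$ restricts on $\psi(K)$ to the field norm $N_{K/\Q}$, the remaining integral becomes $\int_{\TT_K(\Q)\backslash\TT_K(\A)}\chi(N_{K/\Q}(t))\,d\mu_{\TT_K}(t)$, which is the middle expression. So the content left is the evaluation of this last integral.

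The plan for that is to observe that $t\mapsto\chi(N_{K/\Q}(t))$ descends to a continuous unitary character of the group $\TT_K(\Q)\backslash\TT_K(\A)=K^\times\backslash\A_K^\times/\A^\times$: the norm sends the central $\A^\times\subseteq\A_K^\times$ into $(\A^\times)^2$ and $K^\times$ into $\Q^\times$, and the quadratic Hecke character $\chi$ is trivial on both. Since $\TT_\psi$ (equivalently $\TT_K$) is anisotropic over $\Q$ by the definition of a homogeneous toral set, this quotient is compact, so by orthogonality of characters on a compact group the integral equals $1$ if $\chi\circ N_{K/\Q}$ is the trivial character and $0$ otherwise. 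Thus Proposition \ref{test} is reduced to the assertion that $\chi\circ N_{K/\Q}$ is trivial if and only if $K=K_\chi$.

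For this last assertion I would argue by class field theory. The character $\chi$ corresponds to the nontrivial quadratic character of $\mathrm{Gal}(\overline{\Q}/\Q)^{\mathrm{ab}}$ with fixed field $K_\chi$; compatibility of the reciprocity maps of $\Q$ and of $K$ with the norm shows that $\chi\circ N_{K/\Q}$ corresponds to the restriction of that character to $\mathrm{Gal}(\overline{\Q}/K)^{\mathrm{ab}}$, which is trivial precisely when $\mathrm{Gal}(\overline{\Q}/K)\subseteq\mathrm{Gal}(\overline{\Q}/K_\chi)$, i.e. $K\supseteq K_\chi$, i.e. (both being quadratic) $K=K_\chi$. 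Equivalently, and more hands-on, one uses $\ker\chi=\Q^\times N_{K_\chi/\Q}(\A_{K_\chi}^\times)$: if $K=K_\chi$ then $N_{K/\Q}(\A_K^\times)\subseteq\ker\chi$ immediately, whereas if $K\ne K_\chi$ one picks via Chebotarev a rational prime $\ell$ split in $K$ and inert in $K_\chi$ and checks that the idele of $\A_K^\times$ concentrated at $\ell$ with component $(\ell,1)\in K_\ell^\times=\Q_\ell^\times\times\Q_\ell^\times$ maps under $N_{K/\Q}$ to an idele on which $\chi$ takes the value $\chi_\ell(\ell)=-1$, so $\chi\circ N_{K/\Q}\ne 1$. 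Either route finishes the proof. I do not expect a genuine obstacle here: the only non-elementary input is the norm/restriction compatibility of reciprocity maps, and the remaining work is bookkeeping, namely verifying that all the descents to $\TT_K(\Q)\backslash\TT_K(\A)$ are legitimate and keeping the normalization $\chi(g)=\chi(\mathrm{nrd}(g))$ consistent throughout.
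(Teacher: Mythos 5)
Your proposal is correct and takes essentially the same route as the paper: unfolding gives the first equality, and the inner integral is evaluated by viewing $\chi\circ N_{K/\Q}$ as a character of the compact group $\TT_K(\Q)\backslash\TT_K(\A)$ and using class field theory together with a prime $\ell$ split in $K$ and inert in $K_\chi$ to decide triviality. The paper concludes the vanishing via the substitution $t\mapsto ts$, which you phrase equivalently as orthogonality of characters on a compact group.
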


\begin{proof}
Assume that $\chi$ is related to $K_\chi=\Q(\sqrt{D})$ in the sense of the previous paragraph. If $L$ denotes $\Q(\zeta_{|D|})$, then $K\subseteq L$ is a Galois extension and class field theory provides the diagram
\begin{center}
\begin{tikzcd}
\A_K^\times \arrow[r, twoheadrightarrow] \arrow[d, "N_{K/\Q}"'] & \mathrm{Gal}(L/K) \arrow[d, twoheadrightarrow]\\ 
\A_\Q^\times \arrow[dr, twoheadrightarrow] \arrow[r, twoheadrightarrow, "rec"] & \mathrm{Gal}(L/\Q)\arrow[d, twoheadrightarrow] \arrow[r, "\sim"] & (\Z/D\Z)^\times \arrow[d, "\chi"]\\
& \mathrm{Gal}(K/\Q) \arrow[r, "\sim"]&\{\pm1\}.
\end{tikzcd}
\end{center}

If $K=K_\chi$ and $t\in \TT_{\KK}$ the equality $\chi\circ\mathrm{nrd}(\psi(t))=\chi(N_{K/\Q}(t))=\chi(N_{K_\chi/\Q}(t))=1$ follows from the previous diagram. When $K\neq K_\chi$ there exists a prime $\ell$ split in $K$ and inert in $K_\chi$. Let $s$ be the idele corresponding to the class of a prime $\mathfrak{l}$ above $\ell$ in $K$. Then $\chi(\mathrm{nrd}(\psi(s)))=\chi(N_{K/\Q}(\mathfrak{l}))=\chi(p)=-1$ and so the substitution $t\mapsto ts$ shows that $\int_{\TT_{\psi}}\chi(\mathrm{nrd}(t))d\mu_{\TT_\psi}(t)=0$.  
\end{proof}

Let $M$ be a compact subgroup of $\GG(\A_f)$ with $\mathrm{nrd}(M_p)(\Z_p^\times)^2=\Z_p^\times$ for almost every $p$. Let $\mathcal{K}(M)$ be the finite set of quadratic fields attached to the characters of the finite group $\lrquot{\Q^\times}{\A^\times}{(\A^\times)^2\mathrm{nrd}(M)}$. Equip $[\GG]_M$ with $dm_M$ the pushforward of $dm$ under the natural projection.

\begin{proposition}\label{defect} Let $\{Y_i=\TT_{\psi_i}g_i\}$ be a sequence of homogeneous toral sets with $\psi_i\colon K_i\to B$ such that $K_i\notin\mathcal{K}(M)$. Assume also that its sequence of discriminants approaches $\infty$ as $i\to\infty$. Then the projection of $Y_i$ to $[\GG]_M$ becomes equidistributed with respect to $dm_M$. In other words, the sequence $\{\mu_{{Y_i},M}\}$ converges weak* to $dm_M$.
\end{proposition}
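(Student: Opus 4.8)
The plan is to deduce Proposition~\ref{defect} by combining the classification result of Theorem~\ref{metathm} with the character computation of Proposition~\ref{test}, passing through the quotient $[\GG]_M$. First I would reduce the statement to a test against functions. Fix $f\in C_c([\GG]_M)$; pulling back along the projection $[\GG]\to[\GG]_M$ we may regard $f$ as an $M$-invariant function on $[\GG]$, and we must show $\int f\,d\mu_{Y_i}\to\int f\,dm$. Since $[\GG]$ has finite volume and the measures in play are probability measures, by a standard diagonal/subsequence argument it suffices to show that every weak-* accumulation point $\mu$ of the sequence $\{\mu_{Y_i}\}$ satisfies $\int f\,d\mu=\int f\,dm$ for all such $M$-invariant $f$; equivalently, it suffices to show $\mu=dm$ after projecting to $[\GG]_M$.

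Next I would invoke Theorem~\ref{metathm}: because $\mathrm{disc}(Y_i)\to\infty$, any weak-* accumulation point $\mu$ is a homogeneous probability measure on $[\GG]$ invariant under $\GG(\A)^+$. Hence $\mu$ descends to a probability measure on $\GG_{char}=[\GG]/\GG(\A)^+$, and by the discussion in the excerpt (Weyl's criterion on the compact abelian group $\GG_{char}$, together with the identification $\GG_{char}\cong\lrquot{\Q^\times}{\A^\times}{(\A^\times)^2}$ via the reduced norm) the equality $\mu=dm$ on the $M$-invariant part reduces to checking, for every nontrivial character $\chi$ of the finite group $\lrquot{\Q^\times}{\A^\times}{(\A^\times)^2\,\mathrm{nrd}(M)}$, that $\int_{[\GG]}\chi\circ\mathrm{nrd}\,d\mu=0$ (the value of $\int\chi\circ\mathrm{nrd}\,dm$ being $0$ by orthogonality of characters). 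By continuity and weak-* convergence, $\int\chi\circ\mathrm{nrd}\,d\mu=\lim_i\int_{Y_i}\chi\circ\mathrm{nrd}\,d\mu_{Y_i}$. Now apply Proposition~\ref{test}: since each $\psi_i$ has $K_i\notin\mathcal{K}(M)$, we have $K_i\neq K_\chi$ for every $\chi$ as above (the fields $K_\chi$ arising from characters trivial on $\mathrm{nrd}(M)$ are exactly the elements of $\mathcal{K}(M)$), so $\int_{Y_i}\chi\circ\mathrm{nrd}\,d\mu_{Y_i}=\chi(g_i)\cdot 0=0$ for all $i$. Therefore the limit is $0$, as desired, and $\mu$ projects to $dm_M$.

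Finally, one packages this into the statement for $[\GG]_M$: since the only accumulation point of $\{\mu_{Y_i,M}\}$ in the (weak-* compact) space of probability measures on the space $[\GG]_M$ is $dm_M$, the whole sequence converges weak-* to $dm_M$.

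The main obstacle, and the point that needs care rather than routine bookkeeping, is the reduction step that allows one to test only against characters $\chi\circ\mathrm{nrd}$ that are \emph{trivial on $\mathrm{nrd}(M)$}. One must check that an $M$-invariant continuous function on $[\GG]$, after projecting to $\GG_{char}$ and being approximated by characters, is approximated by characters that factor through $\lrquot{\Q^\times}{\A^\times}{(\A^\times)^2\,\mathrm{nrd}(M)}$ --- i.e.\ that $M$-invariance of $f$ forces the relevant characters to be trivial on $\mathrm{nrd}(M)$ --- so that Proposition~\ref{test} applies with precisely the hypothesis $K_i\notin\mathcal{K}(M)$ and no more. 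One also needs the mild hypothesis $\mathrm{nrd}(M_p)(\Z_p^\times)^2=\Z_p^\times$ for almost all $p$ to ensure $\lrquot{\Q^\times}{\A^\times}{(\A^\times)^2\,\mathrm{nrd}(M)}$ is finite so that $\mathcal{K}(M)$ is a finite set and Weyl's criterion is available in the stated form. Everything else --- finiteness of volume, weak-* compactness, the passage to subsequences, and the vanishing of $\int\chi\,dm$ --- is standard.
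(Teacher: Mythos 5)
Your proposal is correct and follows essentially the same route as the paper: invoke Theorem~\ref{metathm} to identify accumulation points as homogeneous $\GG(\A)^+$-invariant probability measures, pass via Weyl's criterion on $\GG_{char}$ to characters $\chi\circ\mathrm{nrd}$ that are trivial on $\mathrm{nrd}(M)$ (hence $K_\chi\in\mathcal{K}(M)$), and conclude with Proposition~\ref{test} using $K_i\notin\mathcal{K}(M)$. The step you flag as needing care --- that an $M$-invariant, $\GG(\A)^+$-invariant function descends to the finite group $\lrquot{\Q^\times}{\A^\times}{(\A^\times)^2\,\mathrm{nrd}(M)}$ and is therefore a finite linear combination of characters trivial on $\mathrm{nrd}(M)$ --- is exactly the reduction the paper makes implicitly, and your account of it is accurate.
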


\begin{proof} In terms of functionals over $C([\GG]_M)$, the pushforward of $\mu_{Y_i}$ corresponds to the restriction of $\mu_{Y_i}$ over the space of $M$-right invariant functions over $[\GG]$. Therefore, by theorem \ref{metathm} and its following discussion, to prove equidistribution we need to check \[\int_{[\GG]}f(g)d\mu(g)=\int_{[\GG]}f(g)dm(g),\] with $\mu$ a weak* accumulation measure of $\mu_{Y_i}$ and $f$ of the form $\chi\circ\mathrm{nrd}$ with an $M$-right invariant character $\chi$. This last condition implies that $K_\chi\in\mathcal{K}(M)$ and now proposition \ref{test} allows us to conclude since after taking a subsequence we can assume that $\mu_{Y_i}$ weak* converges to $\mu$ and then $\int_{[G]}f(g)d\mu(g)=\lim\int_{Y_i}f(g)d\mu_{Y_i}(g)$.
\end{proof}

The map $H \to \Delta_\psi$ sending $h$ to $\Gamma gh$ induces a uniformization of $\Delta_\psi$ by $H/(H\cap g^{-1}\Gamma g)$. Using a Haar measure in $H$, we equip $H/(H\cap g^{-1}\Gamma g)$ with a finite measure and push it to a finite $H$-right invariant  measure $\nu_\psi$ in $\Delta_\psi$. We equip $\Omega_\O=\bigsqcup_{[\psi]\in opt(\O,R)}\Delta_\psi$ with the unique probability measure $\mu_\O$ proportional to $\sum_{opt(\O,R)}\nu_\psi$. There exits a unique probability measure on the quotient $\lquot{\Gamma}{G}$ induced by the counting measure on $\Gamma$ and a Haar measure on $G$ satisfying the property analogue to (\ref{unfold}). It is the pushforward of the measure $dm$ by means of the projection $[\GG]\to \lquot{\Gamma}{G}$ described in (\ref{ident}).

\begin{proposition}\label{prueba} The collection $\Omega_\O$ becomes equidistributed on $\lquot{\Gamma}{G}$ as $d\to-\infty$. In particular, the collection $Ih(\O)$ becomes equidistributed in $\lquot{\Gamma^+}{\H\times \PGL_2(\Q_p)}$ and $Heeg(\O)$ becomes equidistributed in $\lquot{\Gamma}{\H_p}$.
\end{proposition}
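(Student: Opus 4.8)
The plan is to deduce Proposition~\ref{prueba} from Proposition~\ref{defect} applied to the compact group $M=\KK$ (more precisely its non-archimedean part), after checking that the packets $\Omega_\O$ are, up to normalization of measures, the projections to $[\GG]_{\KK}\cong\lquot{\Gamma}{G}$ of a homogeneous toral set in $[\GG]$. First I would fix, for the given $\Z[1/p]$-order $\O=\O_d[1/p]$ and the reference Eichler order $R$, one oriented optimal embedding $\psi_0\in\mathrm{opt}(\O,R)$ (this exists for $d$ in the relevant congruence classes by the local conditions imposed in Section~\ref{Oembeddings}, and for all but finitely many $d$ the local embedding problems are solvable), and set $\TT=\TT_{\psi_0}$, $Y=\TT_{\psi_0}g$ for a suitable $g\in\GG(\A)$ chosen so that the attached order $\Lambda$ of Section~\ref{adelicmethod} is exactly $\O_d$; one can take $g_\#$ so that $g_\ell\in\overline{R}_\ell^\times$ for all $\ell\neq p$ and $g_p$, $g_\infty$ the archimedean/$p$-adic normalizers fixed in Sections~\ref{IScycles} and~\ref{Heeg}. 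With this choice the discriminant of $Y$ in the sense of the excerpt is $|d|$, which tends to $\infty$ exactly when $\mathrm{disc}(\O)\to-\infty$.

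The second step is to identify the pushforward of $\mu_Y$ under $[\GG]\to[\GG]_{\KK}\to\lquot{\Gamma}{G}$ with $\mu_\O$. Here Proposition~\ref{projection} does the bookkeeping: the image of $\TT_{\psi_0}(\A)g$ in $[\GG]_{\KK}$ decomposes along the $h'$ representatives $t_1,\dots,t_{h'}$ of $Pic(\O)$ into $\bigsqcup_{\psi\in[\mathrm{opt}(\O,R)]}\Delta_\psi$, i.e. exactly $\Omega_\O$, and under this decomposition the push of the Haar probability measure on $\TT_{\psi_0}(\Q)\backslash\TT_{\psi_0}(\A)$ becomes the probability measure proportional to $\sum_{[\psi]}\nu_\psi$, because on each piece $t_i^{-1}\widehat{\O}^\times_\#\times K_S^\times$ the Haar measure on $\A_K^\times$ restricts to (a scalar times) Haar measure on $K_S^\times$, which is precisely the measure used to define $\nu_\psi$ via the uniformization $H/(H\cap g^{-1}\Gamma g)\to\Delta_\psi$. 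One also records that $dm$ pushes forward to the canonical probability measure on $\lquot{\Gamma}{G}$ satisfying the analogue of~\eqref{unfold}, as stated just before the Proposition.

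The third step is to verify the hypothesis $K_i\notin\mathcal{K}(M)$ with $M$ the finite part of $\KK$. Since $\mathrm{nrd}(R_q^\times)=\Z_q^\times$ for every $q\neq p$ (established at the start of Section~\ref{Picaction}), the group $\lrquot{\Q^\times}{\A^\times}{(\A^\times)^2\mathrm{nrd}(M)}$ is controlled only at $p$ and $\infty$, so $\mathcal{K}(M)$ is a finite set of quadratic fields ramified only at $p$ and $\infty$; in particular for all but finitely many discriminants $d$ the field $K=\Q(\sqrt d)$ is not in $\mathcal{K}(M)$, and this finite exceptional set is harmless since the statement is asymptotic. Then Proposition~\ref{defect} gives weak-* convergence $\mu_{Y,M}\to dm_M$ on $[\GG]_{\KK}$; combining with the second step and with the further (measure-preserving, proper) projection $[\GG]_{\KK}\to\lquot{\Gamma}{G}$ yields $\mu_\O\to$ the canonical measure. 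The specializations to $Ih(\O)$ in $\lquot{\Gamma^+}{\H\times\PGL_2(\Q_p)}$ and to $Heeg(\O)$ in $\lquot{\Gamma}{\H_p}$ follow by further quotienting by $\mathrm{PSO}_2(\R)$ (respectively $\mathbb{O}_p$) as in Sections~\ref{IScycles} and~\ref{Heeg}, using that continuous compactly supported functions on the quotient pull back to such functions upstairs.

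I expect the main obstacle to be the precise matching of measures in the second step: one must be careful that the uniformization $\lrquot{\O^\times}{K_p^\times}{\Q_p^\times}\cong\Delta_\tau$ (and its analogue in the definite case) transports Haar measure on $K_p^\times$ to exactly the local factor of Haar measure on $\A_K^\times$ appearing in the coset decomposition, and that the finitely many normalizations (the choice of $g$ among the $2^{\#S}$ possibilities, the index-$[N_G(H):H]$ ambiguity, the identification~\eqref{ident0} depending on a choice of $x\in\GG(\Q)$) are consistent across all $h'$ components so that the disjoint pieces genuinely assemble into the single homogeneous toral measure $\mu_Y$. The adelic input (Theorem~\ref{metathm}) and the character computation (Proposition~\ref{test}) are then used as black boxes.
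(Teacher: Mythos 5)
Your proposal follows essentially the same route as the paper's proof: fix $\psi_0\in\mathrm{opt}(\O,R)$, form the homogeneous toral set $Y_\O=\TT_{\psi_0}g$, invoke Proposition~\ref{projection} to see that its projection to $[\GG]_{\KK}\cong\lquot{\Gamma}{G}$ is $\Omega_\O$, verify the hypotheses of Proposition~\ref{defect} (discriminant $\to\infty$ and $K\notin\mathcal{K}(\KK_f)$), and conclude. The one minor deviation is in handling $K\notin\mathcal{K}(\KK_f)$: you fall back on finiteness of $\mathcal{K}(\KK_f)$, conceding a finite exceptional set of discriminants, whereas the paper observes that $\mathcal{K}(\KK_f)=\{\Q(\sqrt{\pm p})\}$ consists of fields ramified at $p$ while the relevant $K$ is always split (Section~\ref{IScycles}) or inert (Section~\ref{Heeg}) at $p$, so the exceptional set is actually empty.
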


\begin{proof}
Remember the identification $[\GG]_{\KK}=\lquot{\Gamma}{G}$. Fix $\psi_0\in opt(\O,R)$ and let $Y_\O$ the homogeneous toral set $\TT_{\psi_0} g$ with $g$ as before. Since $\det(\widehat{R}^\times_\#)=\widehat{\Z}_\#^\times$, the set $\mathcal{K}(\KK_f)$ consist only of quadratic fields ramified at $p$, i.e $\Q(\sqrt{\pm p})$. But our sequence of tori consider $K$ split or inert at $p$, so we will always be in the case $K_i\neq K_\chi$ and therefore by Theorem \ref{metathm} together with Proposition \ref{defect} there's equidistribution of the image of $Y_\O$ in $[\GG]_{\KK}$ if we show that the discriminant goes to $\infty$. Note that Proposition \ref{projection} shows that $Y_\O$ projects onto $\Omega_\O$.

Take $R$ an $\Z$-order in $B(\Q)$ agreeing with $R$ outside $p$. We know $\psi(K)\cap R=\psi(\O_{d}[1/p])$. Localizing at $q\neq p$ we have $g_q=1$ which implies $\Lambda_q=\O_d\otimes\Z_q$. Then the discriminant of $Y_\O$ is the discriminant of $\O_{dp^m}$ for some $m\geq0$. Certainly this goes to infinity and we must have equidistribution.
\end{proof}

\begin{remark} In the indefinite case, since $g_p$ is chosen so that $\psi(K_p)^\times=g_p Ag_p^{-1}$ we have that $g_pv_0$ belongs to the geodesic fixed by $\psi(K_p)^\times$. This is equivalent to $gv_0$ having stabilizer by the maximal order of $\psi(K_p)^\times$ which in turn is equivalent to $\psi(K_p)^\times\cap g_pM_2(\Z_p)g_p^{-1}$ being maximal. This shows that the discriminant of $Y$ is exactly the discriminant of $\O_d$.
\end{remark}

\selectlanguage{english}
\bibliographystyle{alpha}
\bibliography{biblio}

\end{document}